\newtheorem{theorem}{Theorem}[section]
\newtheorem{lemma}[theorem]{Lemma}
\newtheorem{corollary}[theorem]{Corollary}
\newtheorem{proposition}[theorem]{Proposition}
\newtheorem*{claim*}{Claim}
\theoremstyle{definition}
\newtheorem{definition}[theorem]{Definition}
\newtheorem{example}[theorem]{Example}
\newcommand{\rr}{\mathbb{R}}
\newcommand{\nn}{\mathbb{N}}
\newcommand{\zz}{\mathbb{Z}}
\newcommand{\la}{\langle}
\newcommand{\ra}{\rangle}
\numberwithin{equation}{section}
\theoremstyle{theorem}
\newtheorem{theoremA}{Theorem}
\newtheorem*{rep@theorem}{\rep@title}
\newcommand{\newreptheorem}[2]{%
	\newenvironment{rep#1}[1]{%
		\def\rep@title{#2 \ref{##1}}%
		\begin{rep@theorem}}%
		{\end{rep@theorem}}}
\theoremstyle{remark}
\newtheorem{remark}[theorem]{Remark}
\newcounter{joecomments}
\newcounter{ccomments}
\def\Z{\mathbb Z}
\def\R{\mathbb R}
\def\cl{\mathrm{cl}}
\def\scl{\mathrm{scl}}
\def\Aut{\mathrm{Aut}}
\def\tl{\mathrm{tl}}
\def\stl{\mathrm{stl}}
\def\Gt{G_{tor}}
\def\Ht{H_{tor}} 
\def\Gat{\Gamma_{tor}} 
\def\defeq{\vcentcolon=}
\def\tcP{\widetilde{\mathcal{P}}}
\newcommand{\RNum}[1]{\uppercase\expandafter{\romannumeral #1\relax}}
\title{Stable torsion length}
\author{Chloe I. Avery}
\address{Department of Mathematics\\ University of Chicago\\ Chicago, Illinois, USA}
\email[C.~Avery]{chloe@math.uchicago.edu}
\author{Lvzhou Chen}
\address{Department of Mathematics\\ University of Texas at Austin\\ Austin, Texas, USA}
\email[L.~Chen]{lvzhou.chen@math.utexas.edu}
\date{\today}
\begin{document}
	
	\begin{abstract}
	    The \emph{stable torsion length} in a group is the stable word length with respect to the set of all torsion elements. We show that the stable torsion length vanishes in crystallographic groups. We then give a linear programming algorithm to compute a lower bound for stable torsion length in free products of groups. Moreover, we obtain an algorithm that exactly computes stable torsion length in free products of finite groups. The nature of the algorithm shows that stable torsion length is rational in this case. As applications, we give the first exact computations of stable torsion length for nontrivial examples. 
		
	\end{abstract}
	
	\maketitle
	
	\setcounter{tocdepth}{1}
	\tableofcontents
	
	\section{Introduction}
	Given a generating set $S$ of a group $G$, the word length $|g|_S$ measures the least number of generators needed to express an element $g\in G$. 
	For finite generating sets, this is widely studied in geometric group theory, and different finite generating sets give equivalent word lengths up to scaling.
	On the other hand, many groups come with interesting and natural \emph{infinite} generating sets, for instance, the set of all commutators in $G$ (generating the commutator subgroup $[G,G]$), the set of torsion elements, and the set of words in a surface group representing simple closed loops. All of these examples are invariant under automorphisms.
	
	Understanding the word length of such infinite generating sets is often difficult, even for the basic question of whether the word length is bounded \cite{Cal:surface,MargalitPutman,BM}. 
	The first main result in this paper establishes boundedness for the word length with respect to the set of all torsion elements in crystallographic groups, namely those acting properly discontinuously and cocompactly on Euclidean spaces.
	
	\begin{theoremA}[Theorem \protect{\ref{thm: vanishing}}]\label{introthm: vanishing}
	    For any crystallographic group generated by torsion, the associated Cayley graph has finite diameter.
	\end{theoremA}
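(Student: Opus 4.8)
The plan is to combine the Bieberbach structure of $G$ with an elementary analysis of torsion word-length balls. Write $G$ in a short exact sequence $1\to M\to G\to P\to 1$, where $M\cong\mathbb{Z}^n$ is the translation subgroup (normal, torsion-free, abelian, of finite index) and $P$ is the finite point group, which I write additively as acting $\mathbb{Z}$-linearly on $M$ by conjugation. Let $T$ be the set of torsion elements, $|\cdot|_T$ the associated word length, and $B_r=\{g:|g|_T\le r\}$. Since $T$ generates $G$, every element has finite $|\cdot|_T$ and the image $\bar T$ of $T$ in $P$ generates $P$; since $T$ is closed under inversion and conjugation, $B_r=B_r^{-1}$, $B_rB_s\subseteq B_{r+s}$, and $\bigcup_r B_r=G$. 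As $[G:M]<\infty$, some ball $B_{r_0}$ meets every coset of $M$, so it suffices to bound $|\cdot|_T$ on $M$: then $|g|_T\le r_0+\sup_{m\in M}|m|_T$ for all $g$.

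Next I would exhibit a large supply of short elements of $M$. For $p\in\bar T$ choose a torsion lift $t$; then for every $m\in M$ the commutator $[t,m]=t\,(mt^{-1}m^{-1})$ is a product of two torsion elements, hence lies in $B_2$, and in additive notation $[t,m]=(p-\id)m$. Thus $\operatorname{im}(p-\id\colon M\to M)\subseteq B_2$ for each $p\in\bar T$, so the subgroup $L:=\sum_{p\in\bar T}\operatorname{im}(p-\id)$ lies in $B_{2|\bar T|}\subseteq B_{2|P|}$, because each element of $L$ is a sum of $|\bar T|$ elements, one from each summand. Hence, if $L$ has finite index in $M$, picking coset representatives of $M/L$ inside $M$ (each of some finite $T$-length) bounds $|\cdot|_T$ on $M$, and we are done with an explicit bound of the shape $r_0+2|P|+(\text{length of coset representatives for }M/L)$.

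The crux is therefore that $L$ has finite index in $M$, equivalently $(M\otimes\mathbb{Q})^P=0$, and I would argue this by contradiction. Let $\hat L\subseteq M$ be the saturation of $L$; a short argument, using that $\bar T$ is conjugation-closed and generates $P$, shows $\hat L$ is $P$-invariant — hence normal in $G$ — and that $P$ acts trivially on $M/\hat L\cong\mathbb{Z}^k$, where $k=n-\operatorname{rank}L$. So $\overline G:=G/\hat L$ is a central extension $1\to\mathbb{Z}^k\to\overline G\to P\to 1$, with $\overline G$ finitely generated and still generated by torsion. If $k\ge 1$ this is impossible: on one hand $\overline G^{\mathrm{ab}}$ is a finitely generated abelian group generated by torsion elements, hence finite; on the other hand the five-term exact sequence of the central extension, $H_2(P;\mathbb{Z})\to\mathbb{Z}^k\to\overline G^{\mathrm{ab}}\to P^{\mathrm{ab}}\to 0$, has vanishing first map ($H_2(P;\mathbb{Z})$ is finite, $\mathbb{Z}^k$ is torsion-free), so $\mathbb{Z}^k\hookrightarrow\overline G^{\mathrm{ab}}$ — a contradiction. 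Hence $k=0$.

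The only genuine obstacle is this last index statement; everything else is bookkeeping with the balls $B_r$. I expect the homological argument above to be the cleanest; a variant avoids saturating by splitting $M\otimes\mathbb{Q}=(L\otimes\mathbb{Q})\oplus W$ $P$-equivariantly and noting that $\bar T$, hence all of $P$, acts trivially on $W$, so a $P$-invariant lattice in a nonzero $W$ produces the same forbidden central extension (faithfulness of $P$ on $M$ alone does not force $W=0$, which is exactly why the naive reduction fails and one needs generation by torsion). Along the way one should verify the routine points: conjugates and inverses of torsion elements are torsion, $M$ is genuinely torsion-free, $G$ is finitely generated (being virtually $\mathbb{Z}^n$), and the resulting diameter bound is uniform over $G$.
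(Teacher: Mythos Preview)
Your proof is correct and shares its skeleton with the paper's: both use the Bieberbach exact sequence, both exploit the commutator identity $[t,m]=(p-\id)m$ to produce translations of torsion length $\le 2$, and both reduce to showing that the resulting sublattice has finite index in the translation subgroup. The differences are in the two nontrivial steps. For bounded generation, your argument is actually cleaner: you observe that each $\operatorname{im}(p-\id)$ is a \emph{subgroup} of $M$ contained in $B_2$, so $L=\sum_{p\in\bar T}\operatorname{im}(p-\id)\subseteq B_{2|P|}$ immediately; the paper instead picks a basis of the span, iterates the commutator identity (their Lemma~3.3) to handle products $A_1\cdots A_m$, and obtains a bound of the form $4m-2$. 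For the finite-index claim, the routes genuinely diverge: the paper (Lemma~3.5) argues directly that every element of $\Gamma_{tor}$ has translational part in the real span $X$ of the images $\operatorname{im}(A-I)$, by an induction on word length using the cocycle identity $(BA-I)=(B-I)A+(A-I)$; you instead pass to the quotient by the saturation $\hat L$, observe it is a central extension of $P$ by $\mathbb{Z}^k$, and derive a contradiction from the five-term sequence (or equivalently transfer) together with the fact that a group generated by torsion has torsion abelianization. The paper's argument is more elementary and, as stated there, yields the slightly stronger conclusion that $\Gamma_{tor}$ is boundedly generated even when $\Gamma_{tor}\subsetneq\Gamma$; your homological argument is more conceptual and identifies the obstruction cleanly as $(M\otimes\mathbb{Q})^P$, at the cost of needing the hypothesis $\Gamma=\Gamma_{tor}$ to force $\overline G^{\mathrm{ab}}$ finite.
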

	
	In contrast, in non-elementary word-hyperbolic groups, there is no upper bound on word length with respect to the set of all torsion elements (see Remark \ref{remark: hyperbolic}). When the word length $|\cdot|_S$ with respect to a set $S$ is unbounded, it is interesting to investigate the \emph{stable word length} $\|g\|_S\defeq \lim_n \frac{|g^n|_S}{n}$, which measures the growth of the word length in the direction of $g$. Very little is known about stable word length for an infinite generating set in general, and giving good estimates or computing it is notoriously hard \cite{Cal:surface}. Most known results are about the stable commutator length \cite{Cal:rational,CF:sclhyp,Chen:sclBS}.
	
	In this paper, we use topological methods to study the stable word length with respect to conjugate-invariant generating sets. We focus on the special case of \emph{stable torsion length}, namely the stable word length with respect to the set of torsion elements in $G$, but a large portion of the argument works for other conjugate-invariant generating sets that are closed under taking powers. 
	
	We show that stable torsion length in a free product of finite groups is rational and can be computed by an algorithm.
	\begin{theoremA}[Rationality and Computability]\label{introthm: rationality}
	    If $G$ is a free product of arbitrarily many finite groups, then for any $g\in G$, the stable torsion length of $g$ is rational and computable.
	\end{theoremA}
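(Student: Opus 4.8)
The strategy follows the template of Calegari's rationality theorem for stable commutator length in free products, adapted to the set of torsion elements. Fix a free product $G = \ast_{i} G_i$ with each $G_i$ finite, and fix $g \in G$. The stable torsion length $\stl(g)$ should be realized, up to the usual $\lim$/$\inf$ comparison, by "torsion surfaces": maps of surfaces with boundary into a presentation complex (or a suitable $K(G,1)$) whose boundary wraps around $g$ and whose other relators are torsion elements, where one counts the torsion relators used. Concretely, I would set up a model space $X$ which is a wedge of the $K(G_i,1)$'s, or better, work simplicially/combinatorially with reduced words: a "torsion expression" $g^n = \prod_{k=1}^{m} h_k t_k h_k^{-1}$ with each $t_k$ torsion gives $|g^n|_{\text{tor}} \le m$, and $\stl(g) = \lim_n |g^n|_{\text{tor}}/n$. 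Reinterpret such an expression via an admissible surface $S$ mapping to $X$ with one boundary component mapping to $g^n$ (degree $n$) and the remaining boundary components mapping to conjugates of torsion elements; then $-\chi^-$ of the non-$g$ part, suitably normalized, computes the torsion length, and $\stl(g)$ becomes a minimization of a linear quantity over such surfaces.

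The first key step is to put these surfaces in normal form with respect to the free product structure. Using the action of $G$ on the Bass–Serre tree of the free product, any admissible surface can be decomposed along arcs and curves into pieces lying in the vertex spaces $K(G_i,1)$ and pieces in the "edge/product" region, exactly as in the free-product case of scl. Because each $G_i$ is finite, the vertex pieces are highly constrained: a surface piece mapping to $K(G_i,1)$ is (after compression and simplification) essentially recording how a product of conjugates of elements of the finite group $G_i$ multiplies out — and crucially, every element of $G_i$ is itself torsion, so torsion relators supported in a single factor are "free." The second step is to encode the combinatorics of the decomposed surface as an integral point in a rational polyhedral cone: the faces of the surface and the gluing data along the decomposing arcs become non-negative integer variables subject to linear matching (gluing) equations, and both the Euler characteristic count (hence the number of torsion relators) and the winding number $n$ around $g$ are linear functionals on this cone. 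This is the standard "rationality via linear programming over a rational polytope" mechanism: $\stl(g)$ is then the minimum of a rational linear functional over the rational polytope obtained by slicing the cone at $\{n = 1\}$, hence rational, and computable by solving that linear program once the (finitely many) combinatorial types of pieces are enumerated.

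The third step is to certify that the linear-programming optimum actually equals $\stl(g)$ and not merely a bound: one direction (surfaces give upper bounds on $|g^n|_{\text{tor}}$, hence on $\stl$) is the easy monotone direction, so the content is showing every torsion expression for $g^n$ can be promoted to an admissible surface with the claimed Euler-characteristic bookkeeping, and that passing to the cone loses nothing — i.e. rational points of the polytope are approximated by genuine (scaled) surfaces. This is where the earlier lower-bound linear-programming result for free products (already established in the paper) plugs in to give the matching inequality, so the finite-group hypothesis is what upgrades "lower bound" to "exact value": finiteness makes the set of vertex-piece types finite and makes in-factor relators cost nothing, collapsing the problem to a finite-dimensional rational LP.

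**Main obstacle.** The delicate point is the normal-form/decomposition step: controlling surfaces up to homotopy so that each piece lands in a single vertex space or edge region, handling annular and disk compressions without destroying the boundary condition on the $g$-component, and ensuring the Euler-characteristic-to-torsion-length dictionary is exact (in particular that $-\chi$ of the vertex pieces in the finite factors can always be taken to vanish, so that only the "gluing complexity" contributes). Once the surfaces are under combinatorial control, the reduction to a rational LP — and hence rationality and computability of $\stl(g)$ — is essentially formal.
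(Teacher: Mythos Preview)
Your proposal has a genuine gap: you never mention that the admissible surfaces must be \emph{planar}. An expression $g^n = t_1 \cdots t_m$ with each $t_i$ torsion corresponds to a map from a disk with $m$ subdisks removed, i.e.\ a genus-zero surface; a surface of positive genus would instead witness an equation $g^n = [x_1,y_1]\cdots[x_h,y_h]\,t_1\cdots t_m$ and says nothing about torsion length. This is precisely what distinguishes $\stl$ from $\scl$, and it is the reason the straight linear-programming template does \emph{not} close. When you encode the decomposed pieces and gluings as nonnegative integer vectors in a cone, a rational point of the normalized polytope scales to a collection of pieces that glue up to \emph{some} surface---but there is no linear constraint forcing that surface to be connected and planar. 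In the paper's language, planarity corresponds to the gluing graph $\Gamma_S$ being a tree ($\chi(\Gamma_S)=1$), and ``$\chi(\Gamma_S)=1$ and connected'' is not cut out by linear equations on the piece-count vector. Consequently the LP you describe is exactly the one in Section~\ref{subsec: linprog lower bound}, and the paper is explicit that it yields only a \emph{lower bound} (Lemma~\ref{lemma: linprog estimate}).

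You also misidentify the main obstacle. The normal-form step (cutting along preimages of the wedge point, reducing to pieces that are disks or annuli in a single factor) is routine and occupies Section~\ref{subsec: normal form}; it works for arbitrary factor groups. The real work, carried out in Section~\ref{sec: compute stl}, is to bridge the gap between the LP relaxation and genuine planar surfaces. The paper does this \emph{not} by showing the LP optimum is attained, but by a different finiteness argument: it enlarges the allowed class to connected simple surfaces with $\chi(\Gamma_S)\in\{0,1\}$, proves an approximation lemma (Lemma~\ref{lemma: approximation}) showing the $\chi(\Gamma_S)=0$ surfaces are limits of tree ones, and then introduces two operations---\emph{splitting} (breaking a large piece into two, using finiteness of the factor via pigeonhole; Lemma~\ref{lemma: splitting applicable}) and \emph{rewiring} (swapping gluings along edges of the same type)---that reduce any such surface to a disjoint union of \emph{irreducible} ones. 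Finiteness of the factors bounds both the valence and the diameter of the gluing graph of an irreducible surface, so there are only finitely many of them (Proposition~\ref{prop: finitely many irreducible}); $\stl(g)$ is then $-\chi/n$ for one of these finitely many surfaces, hence rational and computable by enumeration. Your sentence ``the reduction to a rational LP\ldots is essentially formal'' is where the argument actually breaks, and the machinery that replaces it is the substantive content of the theorem.
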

	We prove a more technical version with weaker assumptions on factor groups in Theorem \ref{thm: rationality} when there are only two factors. The general case with more factors can be done in the exact same way. We do not pursue a fast algorithm here; see Remark \ref{remark: complexity} for a brief discussion on the computational complexity.
	
	For free products with general factor groups, we give a linear programming algorithm that computes an effective lower bound; see Section \ref{subsec: linprog lower bound}.
	
	We apply these methods to give the first exact computations of stable torsion length for nontrivial examples. 
	These formulas hold true in arbitrary free products by an isometric embedding theorem (Theorem \ref{introthm: Isom emb}) that we prove.
	
	\begin{theoremA}[Product formula; Theorem \protect{\ref{thm: product formula}}]\label{introthm: product formula}
	    Let $G=A*B$ be a free product, and let $a\in A$ and $b\in B$ be torsion elements of order $p$ and $q$ respectively such that $2\leq p\leq q$. Then 
		$$\stl_G(ab)=1-\frac{q}{p(q-1)}.$$
	\end{theoremA}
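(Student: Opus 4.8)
The plan is to reduce to a free product of two finite cyclic groups and then to solve the linear program underlying $\stl$ by exhibiting an explicit extremal surface together with a matching dual certificate. The first step is the reduction to $G=\mathbb{Z}/p*\mathbb{Z}/q$: since $\langle a\rangle\cong\mathbb{Z}/p$ and $\langle b\rangle\cong\mathbb{Z}/q$ are finite subgroups of $A$ and $B$, the inclusion $\langle a\rangle*\langle b\rangle\hookrightarrow A*B$ preserves the stable torsion length of $ab$ by the isometric embedding theorem (Theorem~\ref{introthm: Isom emb}), which is precisely what makes the formula hold in an arbitrary free product. I would dispose of the degenerate case $p=q=2$ immediately: then $G\cong D_\infty$, $ab$ is the generating translation, $(ab)^n=\bigl((ab)^n a\bigr)\cdot a$ is a product of two reflections (one checks $((ab)^n a)^2=1$), so $|(ab)^n|_T\le 2$ and $\stl_G(ab)=0=1-\tfrac{2}{2(2-1)}$. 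Hence I may assume $(p,q)\ne(2,2)$, equivalently $pq-p-q>0$.

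For the upper bound $\stl_G(ab)\le 1-\tfrac{q}{p(q-1)}$, recall that $n\mapsto|(ab)^n|_T$ is subadditive, so $\stl_G(ab)=\inf_n|(ab)^n|_T/n$ and it suffices to produce one good factorization of $(ab)^N$ for a suitable period $N$ (equivalently, one planar admissible surface of the kind appearing in the linear-programming description underlying Theorem~\ref{thm: rationality}). The natural starting point is the telescoping identity $(ab)^{qm}=\prod_{j=0}^{qm-1}b^{j}ab^{-j}$, a product of $qm$ conjugates of $a$, which already gives $\stl_G(ab)\le 1$. The improvement should come from the relation $b^{q}=1$ -- which makes $b^{j}ab^{-j}$ depend only on $j\bmod q$, so there are really only $q$ distinct factors $c_0,\dots,c_{q-1}$ -- combined with $a^{p}=1$, used to absorb the $a$-syllables of $(ab)^N$ in runs of $p$. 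I expect the resulting bookkeeping to give exactly $1-\tfrac1p-\tfrac1{p(q-1)}$ torsion elements per syllable pair, and to pin down the correct $N$ and the multiplicities of the $\mathbb{Z}/p$- and $\mathbb{Z}/q$-pieces of the extremal surface.

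For the matching lower bound I would produce a dual certificate. Since homogeneous quasimorphisms vanish on torsion, any homogeneous quasimorphism $\phi$ on $G$ yields $\stl_G(ab)\ge|\phi(ab)|/D(\phi)$; the plan is to take the homogenized Rademacher/Dedekind-sum quasimorphism on $\mathbb{Z}/p*\mathbb{Z}/q$, normalized by $\phi(ab)=1$, and to compute that its defect equals $\tfrac{p(q-1)}{pq-p-q}$. Given the machinery already in place, the cleaner route is probably to extract this certificate directly as an optimal feasible point of the dual of the linear program of Theorem~\ref{thm: rationality}, whose constraints are the gluing conditions on surface pieces over the two vertex groups; linear-programming duality then closes the gap against the surface from the previous step.

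The main obstacle is exactly this matching: identifying the extremal surface (the right $N$ and the exact piece multiplicities) and the dual object with the claimed defect, and then certifying optimality via LP duality, i.e.\ that no admissible surface does better. The numerology behind $1-\tfrac{q}{p(q-1)}$ is delicate, and the boundary case $pq-p-q\le0$ -- which is only $p=q=2$ -- must be kept separate, since there $G$ is amenable, there are no nonzero homogeneous quasimorphisms, and $\stl$ vanishes.
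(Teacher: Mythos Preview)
Your reduction to $\mathbb{Z}/p*\mathbb{Z}/q$ via the isometric embedding theorem is correct and matches the paper. The real problem is the lower bound.

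Your quasimorphism certificate cannot work. The inequality $\stl_G(g)\ge |\phi(g)|/D(\phi)$ for a homogeneous quasimorphism $\phi$ is correct, but by Bavard's duality the supremum of the right-hand side over all homogeneous $\phi$ is exactly $2\scl_G(g)$. For $G=\mathbb{Z}/p*\mathbb{Z}/q$ one has $2\scl_G(ab)=1-\tfrac{1}{p}-\tfrac{1}{q}$, whereas the claimed value is $\stl_G(ab)=1-\tfrac{1}{p}-\tfrac{1}{p(q-1)}$. Since $\tfrac{1}{p(q-1)}\le \tfrac{1}{q}$ with equality only when $p=q=2$, we have $2\scl_G(ab)<\stl_G(ab)$ whenever $(p,q)\neq(2,2)$. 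Hence no homogeneous quasimorphism with $\phi(ab)=1$ can have defect as small as $\tfrac{p(q-1)}{pq-p-q}$; the Rademacher quasimorphism you propose will only certify $1-\tfrac{1}{p}-\tfrac{1}{q}$. The gap between $\stl$ and $2\scl$ here is exactly the content of the theorem, and it comes from the \emph{planarity} constraint on admissible surfaces, which quasimorphisms do not see.

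The paper's lower bound is obtained differently: it sets up the explicit linear program over the polyhedron $C_{\mathcal{P}}$ of Section~\ref{subsec: linprog lower bound}, with one variable $x_n$ for each $A$-piece $P_n$ ($1\le n\le p$) and $y_n$ for each $B$-piece $Q_n$ ($1\le n\le q$), and solves it by hand. The crucial constraint is $\chi_\Gamma\ge 0$, which after using the gluing and normalizing conditions becomes $\sum x_k+\sum y_k\ge 1$; this is precisely the planarity input that has no quasimorphism analogue. Minimizing $-\chi_o=1-x_p-y_q$ subject to this and the gluing conditions yields $1-\tfrac{q}{p(q-1)}$ directly. Your fallback of ``extracting a dual certificate from the LP of Theorem~\ref{thm: rationality}'' is closer to the mark, but you would need to actually set up \emph{this} LP (not Bavard's) and carry out the optimization.

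For the upper bound, your telescoping idea is plausible but incomplete; the paper instead exhibits a connected simple surface $S$ of degree $p(q-1)$ with $\chi(\Gamma_S)=0$ realizing the optimal LP vertex $(x_1,x_p,y_1,y_q)=\bigl(0,\tfrac{1}{p},\tfrac{pq-p-q}{p(q-1)},\tfrac{1}{p(q-1)}\bigr)$, and then invokes the approximation Lemma~\ref{lemma: approximation} to produce genuine torsion-admissible surfaces of complexity tending to $1-\tfrac{q}{p(q-1)}$.
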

	
	\begin{theoremA}[Commutator formula; Theorem \protect{\ref{thm: compute abAB}}]\label{introthm: compute abAB}
	    Let $G=A*B$ be a free product, and let $a\in A$ and $b\in B$ be torsion elements of orders $p$ and $q$ respectively, where $p,q\ge2$. Then we have
	    $$\stl_G([a,b])=1-\frac{1}{\min(p,q)-1}.$$
	\end{theoremA}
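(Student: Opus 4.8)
\emph{Strategy and reductions.} I will prove the two inequalities $\stl_G([a,b])\le 1-\frac1{p-1}$ and $\stl_G([a,b])\ge 1-\frac1{p-1}$ separately, writing $p=\min(p,q)$; the upper bound reduces to the Product Formula, and the lower bound uses the surface / linear-programming machinery behind Theorem~\ref{introthm: rationality}. I may assume $p\le q$, since $\stl_G([a,b])=\stl_G([a,b]^{-1})=\stl_G([b,a])$ and interchanging the two free factors swaps the roles of $p$ and $q$. Moreover, by the isometric embedding theorem (Theorem~\ref{introthm: Isom emb}), the inclusion into $A\ast B$ of the subgroup $\langle a\rangle\ast\langle b\rangle$ (which is isomorphic to $\zz/p\ast\zz/q$) is an isometry for $\stl$, so it suffices to treat $G=\zz/p\ast\zz/q$.

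\emph{Upper bound.} Put $c:=bab^{-1}$, a torsion element of order $p$. Substituting $c^{j}=ba^{j}b^{-1}$ turns any nonempty reduced alternating word in $\langle a\rangle$ and $\langle c\rangle$ into a reduced word of positive syllable length in $\zz/p\ast\zz/q$, so $H:=\langle a,c\rangle\cong\zz/p\ast\zz/p$. Since $[a,b]=a(bab^{-1})^{-1}=ac^{-1}$ and $c^{-1}$ is a torsion element of the second factor of $H$ of order $p$, the Product Formula (Theorem~\ref{introthm: product formula}), applied in $H=\langle a\rangle\ast\langle c\rangle$ to the torsion elements $a$ and $c^{-1}$, gives
\[
\stl_H([a,b])=\stl_H(ac^{-1})=1-\frac{p}{p(p-1)}=1-\frac1{p-1}.
\]
A torsion factorization of an element of $H$ is in particular a torsion factorization in $G$, so $\stl_G(g)\le\stl_H(g)$ for all $g\in H$; in particular $\stl_G([a,b])\le 1-\frac1{p-1}$.

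\emph{Lower bound.} Here I would invoke the topological reformulation behind Theorem~\ref{introthm: rationality}: $\stl_G([a,b])=\inf_S\,-\chi(S)/n(S)$, the infimum over admissible surfaces $S$ in the graph of spaces of $G=\zz/p\ast\zz/q$ whose boundary wraps $[a,b]$ a total of $n(S)$ times. Cutting $S$ along the preimage of the edge space decomposes it into pieces mapping to $K(\zz/p,1)$ and $K(\zz/q,1)$, and the admissible ways of gluing such pieces along the cyclic word $aba^{-1}b^{-1}$ form a finite linear program; the goal is to show its optimum equals $1-\frac1{p-1}$. The cleanest certificate for the lower bound is a feasible solution of the dual — in practice, a homogeneous quasimorphism $\phi$ on $G$ (the extremal cochain produced by the program) with $\phi([a,b])=\bigl(1-\frac1{p-1}\bigr)D(\phi)$. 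Indeed, every homogeneous quasimorphism vanishes on torsion, so a factorization $[a,b]^{n}=t_1\cdots t_k$ forces $n\,\phi([a,b])=\phi([a,b]^{n})\le(k-1)D(\phi)$, whence $\stl_G([a,b])\ge\phi([a,b])/D(\phi)$. One then verifies that this $\phi$ is dual to the surface built for the upper bound, so equality holds.

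\emph{Main obstacle.} The lower bound is the crux. It cannot be imported from the subgroup $\langle a,bab^{-1}\rangle$ (passing to a subgroup only decreases $\stl$), nor — when $\gcd(p,q)=1$, so that $\zz/p\ast\zz/q$ has no nontrivial homomorphism to $\zz/p\ast\zz/p$ — from any quotient; admissible surfaces in $\zz/p\ast\zz/q$ must be analyzed directly. The difficulty is that purely local bookkeeping on the surface pieces is far too weak, giving only $\stl_G([a,b])\ge 0$ (sharp for $p=2$ but not for $p\ge 3$). One has to exploit the global way the $K(\zz/p,1)$- and $K(\zz/q,1)$-pieces interlock around $aba^{-1}b^{-1}$, and in particular to show that the extra torsion available in the order-$q$ factor when $q>p$ (or when $q$ is composite) cannot be used to do better than $1-\frac1{p-1}$. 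Pinning down the exact optimum of the linear program — equivalently, exhibiting the extremal quasimorphism and checking it is dual to the upper-bound surface — is where the real work lies.
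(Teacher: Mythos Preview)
Your upper bound is correct and is a genuinely different route from the paper's. The paper obtains both inequalities at once by setting up and explicitly solving the linear program over simple surfaces for $[a,b]$ in $\Z/p*\Z/q$ (Lemma~\ref{lemma: compute abAB}): the optimal value gives the lower bound, and the optimizing vector is realized by a concrete connected simple surface $S$ with $\chi(\Gamma_S)=0$, which via the approximation Lemma~\ref{lemma: approximation} yields the upper bound. Your shortcut through the subgroup $H=\langle a,bab^{-1}\rangle\cong\Z/p*\Z/p$, rewriting $[a,b]=a\cdot(bab^{-1})^{-1}$ and invoking the Product Formula there, is a clean and pleasant alternative for the upper bound that avoids building the surface by hand.

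The lower bound, however, is a genuine gap. You do not carry out the LP computation, and the substitute you sketch --- a homogeneous quasimorphism $\phi$ with $\phi([a,b])/D(\phi)=1-\tfrac{1}{p-1}$ --- cannot be expected to work. The inequality you wrote is exactly the content of Lemma~\ref{lemma: scl inequality}, and taking the supremum over all homogeneous $\phi$ gives, by Bavard's duality, precisely $2\,\scl_G([a,b])$; so this method proves only $\stl_G([a,b])\ge 2\,\scl_G([a,b])$. There is no reason this should be sharp: already for the element $ab$ in $\Z/p*\Z/p$ one has $\stl=1-\tfrac{1}{p-1}$ by Theorem~\ref{thm: product formula} while $2\,\scl=1-\tfrac{2}{p}$, strictly smaller for $p\ge 3$. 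The conceptual error is that the $\stl$ linear program is over \emph{planar} admissible surfaces, and its dual variables are not quasimorphisms --- those are dual to the $\scl$ program, which allows arbitrary genus. The paper flags this explicitly: no Bavard-type duality is known for $\stl$. To close the gap you must do what the paper does in Lemma~\ref{lemma: compute abAB}: write down the gluing, normalizing, and $\chi_\Gamma\ge 0$ constraints for the finite sufficient collection of pieces over the cyclic word $aba^{-1}b^{-1}$, and solve the resulting LP directly to obtain the value $1-\tfrac{1}{p-1}$.
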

	
	Our results show that the stable torsion length behaves in a way similar to the \emph{stable commutator length}, the stable word length with respect to the set of all commutators. In recent years, the study of stable commutator length has seen many advances \cite{Cal:rational,Chen:sclBS} and interesting applications to the surface subgroup problem \cite{Cal:surfsubgrp,CW:surfsubgrp,Wilton} and the simplicial volume \cite{HL:simpvol}. However, the tools established for stable commutator length are special in an essential way, and thus new tools are required to understand the stable torsion length despite the similarity; see Section \ref{subsec: methods}.
	
	
	In particular, we are not aware of an analog of the Bavard's duality (\cite{Bavard}, \cite[Theorem 2.70]{Cal:sclbook}) in the case of stable torsion length. Due to the lack of this duality, it is not easy to verify whether certain groups have trivial stable torsion length, such as amenable groups, which include crystallographic groups that we consider in Theorem \ref{introthm: vanishing}.
	
	\subsection{Methods}\label{subsec: methods}
	Given a group $G$, let $X$ be a topological space with $\pi_1(X)=G$. 
	Given a conjugate-invariant subset $S$, for a fixed $k\ge1$ there is an expression $g=s_1\cdots s_k$ for some $s_i\in S$ 
	if and only if there is a continuous map $f:\Sigma\to X$, where $\Sigma$ is a disk with $k$ subdisks $D_1,\cdots, D_k$ removed,
	so that each $\partial D_i$ represents a conjugacy class in $S$ and the remaining boundary component $\partial_0 \Sigma$ of $\Sigma$ represents the conjugacy class of $g$; 
	see Figure \ref{fig: admissible} for an illustration. We refer to such a surface as an $S$-admissible surface.
	
	Thus finding the word length of $g$ with respect to $S$ is to find the least complicated connected \emph{planar} surface in $X$ bounding $g$ in the above way.
	Similarly, finding the stable word length is to minimize $\frac{-\chi(\Sigma)+1}{n}$ over all connected planar surfaces $\Sigma$ bounding $g^n$ as above for some $n\in\Z_+$, which turns out to be the same as minimizing $\frac{-\chi(\Sigma)}{n}$ if $S$ is closed under taking powers; see Lemma \ref{lemma: stl via surfaces}. 
	
	\begin{figure}
		\centering
		\labellist
			\small \hair 2pt
			\pinlabel $\Sigma$ at 120 -20
			\pinlabel $X$ at 440 0
			\pinlabel $\partial_0\Sigma$ at 120 50
			\pinlabel $f$ at 300 140
			\pinlabel $\gamma$ at 440 65
			\pinlabel $\partial D_1$ at 20 200
			\pinlabel $\partial D_2$ at 80 220
			\pinlabel $\partial D_3$ at 150 220
			\pinlabel $\partial D_4$ at 220 200
			\endlabellist
		\includegraphics[scale=.4]{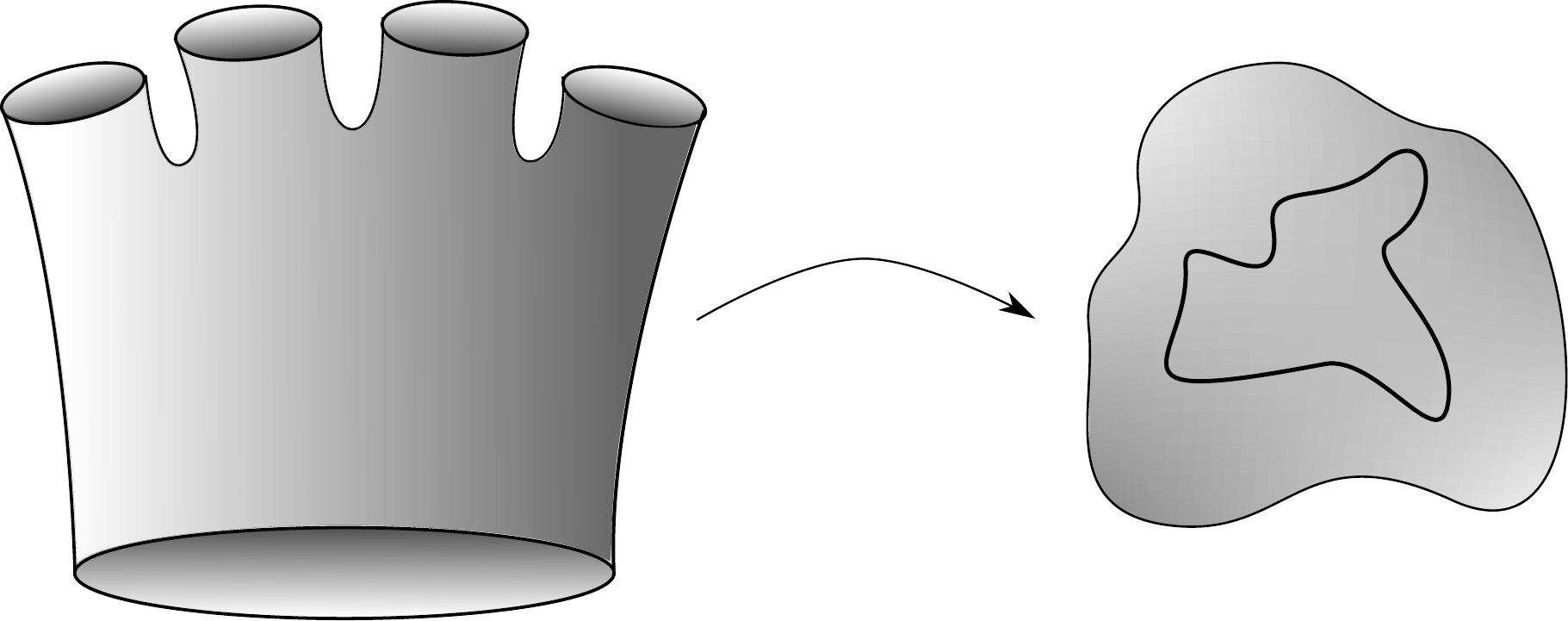}  
		\caption{A planar surface $\Sigma$ bounding a loop $\gamma$ representing the element $g$}
		\label{fig: admissible}
	\end{figure}
	
	Such a topological interpretation in terms of surfaces makes the problem of computing stable word length more structured since there are nice operations on surfaces: compression, cut-and-paste, and taking finite covers. However, unlike the case of stable commutator length, where an equation $g=[a_1,b_1]\cdots [a_k,b_k]$ represents a surface of genus $k$, here we are restricting our attention to \emph{planar} surfaces. This makes the problem harder since the operations above (e.g. taking finite covers) do not necessarily preserve the class of planar surfaces and so we are forced to use certain subclasses of operations.
	
	If $S$ is the set of torsion elements in $G$, this topological interpretation specializes to the case of stable torsion length of an element $g$, which we denote as $\stl_G(g)$,
	and refer to an $S$-admissible surface bounding $g^n$ as a \emph{torsion-admissible surface} for $g$ of degree $n$. 
	When $G$ is a free product $G=A*B$, we can take $X$ to be a wedge of spaces $X_A$ and $X_B$ with $\pi_1(X_A)=A$ and $\pi_1(X_B)=B$. 
	Then each element of $S$ is represented by a loop which is supported either in $X_A$ or in $X_B$. 
	Using this particular structure, we develop a normal form of torsion-admissible surfaces for a given element $g$ which is not conjugate into $A$ or $B$; see Section \ref{subsec: normal form}.
	
	To further simplify the problem, we introduce and focus on the family of \emph{simple surfaces}. These are surfaces $\Sigma$ made of particular pieces such that each is either a disk or an annulus which is supported either in $X_A$ or in $X_B$. The gluing of pieces are encoded by the \emph{gluing graph} $\Gamma_\Sigma$.
	Any surface in normal form can be simplified into a simple surface whose gluing graph is a tree. 
	
	
	Therefore, we obtain a lower bound of $\stl_G(g)$ by minimizing the complexity $\frac{-\chi(\Sigma)}{n}$ over all connected simple surfaces $\Sigma$ for $g$ with $\chi(\Gamma_\Sigma)=1$; see Lemma \ref{lemma: stl and simple surfaces}. This can be formulated as a linear programming problem when we further relax to the class of not necessarily connected simple surfaces with $\chi(\Gamma_\Sigma)\ge0$, which gives a way to compute a nontrivial
	lower bound of the stable torsion length; see Section \ref{subsec: linprog lower bound}.
	
	When torsion elements in $A$ (resp. $B$) form a subgroup, any simple surface whose gluing graph is a tree is itself a surface in normal form.
	Thus $\stl_G(g)$ is exactly the infimal complexity over all connected simple surfaces $\Sigma$ for $g$ with $\chi(\Gamma_\Sigma)=1$.
	
	This characterization leads to an isometric embedding theorem (Theorem \ref{thm: isometric embedding}), that allows us to compute the stable torsion length in simpler free products to obtain more general results. Here we state a special case:
	
	\begin{theoremA}[Isometric Embedding, weak version]\label{introthm: Isom emb}
	    Suppose that $i_A :A\to A'$ and $i_B: B\to B'$ are injective homomorphisms from finite groups $A$ and $B$. 
	    Then the induced map $i: A*B\to A'*B'$ preserves the stable torsion length, i.e.
		$$\stl_{A*B}(g)=\stl_{A'*B'}(i(g))$$
		for any $g\in A*B$.
	\end{theoremA}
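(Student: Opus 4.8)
The plan is to prove the two inequalities $\stl_{A*B}(g) \ge \stl_{A'*B'}(i(g))$ and $\stl_{A*B}(g) \le \stl_{A'*B'}(i(g))$ separately, in each case by transporting an (almost) optimal surface across the map $i$. The first inequality is the easy direction: any torsion-admissible surface $f:\Sigma\to X$ for $g^n$ can be post-composed with the map $X\to X'$ induced by $i$ (where $X = X_A \vee X_B$ and $X' = X_{A'}\vee X_{B'}$ with the obvious inclusions of $X_A$ into $X_{A'}$, etc.), and since $i$ sends torsion elements to torsion elements, the resulting surface is torsion-admissible for $i(g)^n$ of the same degree and complexity. Taking infima over all admissible surfaces gives $\stl_{A*B}(g)\ge\stl_{A'*B'}(i(g))$. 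This does not even use finiteness of $A,B$.

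For the reverse inequality I would use the characterization of $\stl$ in terms of simple surfaces whose gluing graph is a tree. Because $A'$ (resp. $B'$) need not have its torsion elements forming a subgroup, a priori one only gets that $\stl_{A'*B'}(i(g))$ equals the infimal complexity over connected simple surfaces for $i(g)$ with $\chi(\Gamma_\Sigma)=1$; however, as the excerpt notes, when the torsion elements of the factors \emph{do} form subgroups — which is automatic here since $A$ and $B$ are finite — any simple surface whose gluing graph is a tree is automatically in normal form, so on the source side $\stl_{A*B}(g)$ is exactly realized (in the limit) by such simple surfaces. The key step is then: given a connected simple surface $\Sigma'$ for $i(g)^n$ in $X'$ with $\chi(\Gamma_{\Sigma'})=1$, produce a simple surface $\Sigma$ for $g^n$ in $X$ of no greater complexity. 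Each piece of $\Sigma'$ is a disk or annulus supported in $X_{A'}$ or $X_{B'}$, and its boundary loops represent torsion elements of $A'$ or $B'$. The plan is to push these back: an $X_{A'}$-piece records a relation among torsion elements of $A'$, and since $i_A$ is injective with image exactly the torsion subgroup of $A$ (finite groups are torsion, so $A$ itself is "the" torsion set), one must replace each boundary conjugacy class by a torsion conjugacy class of $A$ lifting it; the compatibility needed to reglue the pieces along the gluing graph is exactly what the tree structure of $\Gamma_{\Sigma'}$ plus injectivity of $i_A, i_B$ provide, so the pullback pieces glue into a simple surface $\Sigma$ for $g^n$ with the same gluing graph, hence $\chi(\Gamma_\Sigma)=1$ and $-\chi(\Sigma)=-\chi(\Sigma')$.

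The main obstacle is this gluing-compatibility step: a boundary loop shared between two pieces of $\Sigma'$ is labeled by a single conjugacy class of torsion elements in, say, $A'$, but when we lift to $A$ the two pieces might "want" different lifts along the same edge of the gluing graph, and one needs to choose a coherent system of lifts over the whole graph. This is where the tree hypothesis is essential: on a tree one can propagate a choice of lift from a root piece outward without any monodromy obstruction, using injectivity of $i_A$ and $i_B$ to transport the chosen torsion element and its conjugating element consistently across each edge; one also has to check that an $X_{A'}$-disk or annulus (i.e. a filling of a product of torsion elements of $A'$ in a space with $\pi_1 = A'$) pulls back to a corresponding filling in a space with $\pi_1 = A$, which follows because the relevant relation already holds in the subgroup $i_A(A)\le A'$ and hence in $A$. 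Once the two inequalities are in hand the theorem follows immediately; I would also remark that the same argument gives the general case of arbitrarily many finite factors verbatim, and that the statement is the ``weak version'' because the full Theorem~\ref{thm: isometric embedding} presumably relaxes the hypothesis from ``finite'' to ``torsion elements form a subgroup'' in the source.
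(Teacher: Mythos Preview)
Your overall strategy matches the paper's: the easy direction is monotonicity, and the hard direction pulls back a connected simple surface $S'$ for $i(g)$ with $\chi(\Gamma_{S'})=1$ to one for $g$ of the same complexity, using that on the $A*B$ side (where $A,B$ are finite) the infimum over such simple surfaces \emph{equals} $\stl_{A*B}(g)$.

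However, your description of the pullback step is confused about how pieces of a simple surface are glued, and this leads you to invent an obstacle that does not exist. Pieces are \emph{not} glued along boundary loops representing torsion elements; they are glued along \emph{turns}, i.e.\ arcs on the polygonal boundary that map to the wedge point. Each turn has a type $(\alpha_i,\alpha_j)$ or $(\beta_k,\beta_\ell)$ determined entirely by the labels $a_i',b_i'$ of the adjacent arcs. Since $a_i'=i_A(a_i)$ and $b_i'=i_B(b_i)$, replacing each arc label $a_i'\mapsto a_i$, $b_i'\mapsto b_i$ is canonical; there is no ``choice of lift'' to propagate and hence no monodromy issue. The tree hypothesis on $\Gamma_{S'}$ plays no role in making the pullback glue up --- it is only needed so that the resulting simple surface on the $A*B$ side is actually torsion-admissible (via Corollary~\ref{cor: simple surface is enough}).

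What injectivity of $i_A$ really buys is the one thing you do mention: the winding class $w\in A$ of the pulled-back polygonal boundary satisfies $i_A(w)=$ (winding class of $C'$), so $w=\mathrm{id}$ iff $C'$ is a disk-piece, and $w$ is torsion because $A$ is finite. Hence each piece pulls back to a piece of the same topological type, and $\chi(S)=\chi(S')$. That is the entire argument.

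Two small omissions: you should handle separately the case where $g$ is conjugate into $A$ or $B$ (then $g$ is torsion since $A,B$ are finite, and both sides vanish), since the simple-surface machinery assumes $g$ is a cyclically reduced word of length $\ge 2$. Also, your guess about the ``weak version'' is slightly off: the full Theorem~\ref{thm: isometric embedding} allows $A,B$ arbitrary but imposes a hypothesis either on $A'_{tor},B'_{tor}$ or on the letters of $g$.
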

	
		
	To exactly compute the stable torsion length by an algorithm, it is desirable to extend the family of connected simple surfaces $\Sigma$ with $\chi(\Gamma_\Sigma)=1$ to those with $\chi(\Gamma_\Sigma)\ge0$. This relaxation does not affect the computation when $A$ and $B$ are finite groups, as we are able to show that connected simple surfaces $\Sigma$ with $\chi(\Gamma_\Sigma)=0$ can be approximated by those with $\chi(\Gamma_\Sigma)=1$; see Lemma \ref{lemma: approximation}. This is achieved by considering two operations, splitting and rewiring, that we introduce in Section \ref{sec: compute stl}. 
	
	Using these two operations, we further show that any connected simple surface $\Sigma$ with $\chi(\Gamma_\Sigma)\ge0$ can be simplified into a union of \emph{irreducible} ones; see Section \ref{subsec: irreducible}. Moreover, there are only finitely many different irreducible simple surfaces (Proposition \ref{prop: finitely many irreducible}), which can be enumerated. As a result, the stable torsion length is actually equal to $\frac{-\chi(\Sigma)}{n}$ for some irreducible simple surface $\Sigma$, and thus must be a rational number. This yields the Rationality Theorem \ref{introthm: rationality}.
	
	Finally we carry out explicit computations in free products of cyclic groups
	and then use Theorem \ref{introthm: Isom emb} to generalize the formulas to arbitrary free products and prove 
	Theorems \ref{introthm: product formula} and \ref{introthm: compute abAB}.
    
	\subsection{Organization of the paper}
	In Section \ref{sec: background}, we provide some basic properties of stable torsion length and formulate the interpretation via torsion-admissible surfaces.
	
	We show crystallographic groups have trivial stable torsion length due to bounded generation in Section \ref{sec: crystallographic}.
	In Section \ref{sec: free prod}, we develop a normal form of torsion-admissible surfaces in a free product and introduce simple surfaces, 
	using which we prove the Isometric Embedding Theorem \ref{introthm: Isom emb} and give a lower bound estimation via linear programming.
	Then in Section \ref{sec: compute stl}, we specialize to free products of finite groups and introduce the operations of splitting and rewiring to 
	show the Rationality Theorem \ref{introthm: rationality}.
	Finally in Section \ref{sec: examples} we carry out computations in explicit examples and prove Theorems \ref{introthm: product formula} and \ref{introthm: compute abAB}.
	
	\subsection*{Acknowledgement}
	The authors would like to thank Benson Farb for suggesting this problem and the study of $\stl$, as well as for comments on the draft and invaluable support from start to finish. We would also like to thank Danny Calegari, Hannah Hoganson, and Kasia Jankiewicz for many helpful conversations.

	\section{General setup}\label{sec: background}
	In Section \ref{subsec: algebraic}, we give the foundational definitions and deduce fundamental properties of stable word length with respect to a conjugate-invariant set $S$ of a group $G$. 
	Throughout we assume $S$ to be symmetric in the sense that $s\in S$ if and only if $s^{-1}\in S$.
	Some properties are not used in this paper but could be of independent interest.
	In Section \ref{subsec: topological}, we give a topological formulation when $S$ is closed under taking powers, which is crucial in Sections \ref{sec: free prod}--\ref{sec: examples}.
	
	
	\subsection{The algebraic point of view}\label{subsec: algebraic}
	Let $G$ be a group and let $S$ be a (symmetric) conjugate-invariant subset. Let $\langle S\rangle$ be the (normal) subgroup of $G$ generated by $S$.
	When $S$ is the set of commutators, we have $\langle S\rangle=[G,G]$. When $S$ is the set of torsion elements, we denote $\langle S\rangle$ as $\Gt$, the subgroup generated by torsion elements.
	\begin{definition}
		For any element $g$ of $\langle S\rangle$, the word length $|g|_S$ is
		the minimal $k$ such that $g=s_1\cdots s_k$, where each $s_i\in S$.
		When $S$ is the set of torsion elements (resp. commutators), 
		we denote $|g|_S$ by $\tl_G(g)$ (resp. $\cl_G(g)$) and refer to it as the \emph{torsion length} (resp. \emph{commutator length}) of $g$.
	\end{definition}
	The sequence $|g^n|_S$ is subadditive in $n$, thus
	$$\lim_{n\to\infty} \frac{|g^n|_S}{n}=\inf \frac{|g^n|_S}{n},$$
	which is called the \emph{stable word length} of $g$, denoted $\|g\|_S$.
	When $S$ is the set of torsion elements (resp. commutators), we denote $\|g\|_S$ by $\stl_G(g)$ (resp. $\scl_G(g)$) and refer to it as
	the \emph{stable torsion length} (resp. \emph{stable commutator length});
	When the group $G$ is understood we simply denote it as $\stl(g)$ (resp. $\scl(g)$).
	
	The following properties are standard. They are well known in the case of stable commutator length (see \cite[Chapter 2]{Cal:sclbook}).
	
	\begin{lemma}[Monotonicity] \label{lemma: monotone} 
		Let $S\subset G$ and $T\subset H$ be conjugate-invariant subsets.
		Suppose $\varphi:G\to H$ is a group homomorphism such that $\varphi(S)\subset T$. 
		Then
		$$|\varphi(g)|_T \le |g|_S \quad and \quad \|\varphi(g)\|_T \le \|g\|_S$$
		for all $g\in \langle S\rangle$.
		In particular, for an arbitrary homomorphism $\varphi: G\to H$, we have
		$$\tl_H(\varphi(g))\leq \tl_G(g)\quad and \quad \stl_H(\varphi(g))\leq \stl_G(g)$$ for all $g\in \Gt$.
	\end{lemma}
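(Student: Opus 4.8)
The plan is to prove the non-stable inequality $|\varphi(g)|_T\le|g|_S$ directly, deduce the stable inequality by applying it to powers, and then read off the torsion statement as the special case where $S$ and $T$ are the torsion sets.

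First I would note that for $g\in\langle S\rangle$ a finite factorization $g=s_1\cdots s_k$ into elements of $S$ exists (using symmetry of $S$), so $|g|_S$ is finite; taking $k=|g|_S$ to be minimal and applying $\varphi$ gives $\varphi(g)=\varphi(s_1)\cdots\varphi(s_k)$ with each $\varphi(s_i)\in\varphi(S)\subseteq T$. In particular $\varphi(g)\in\langle T\rangle$, so $|\varphi(g)|_T$ is defined, and this factorization shows $|\varphi(g)|_T\le k=|g|_S$. For the stable statement I would apply this inequality with $g^n$ in place of $g$: since $\varphi$ is a homomorphism, $\varphi(g)^n=\varphi(g^n)$, hence $|\varphi(g)^n|_T=|\varphi(g^n)|_T\le|g^n|_S$ for every $n\ge1$; dividing by $n$ and letting $n\to\infty$ (both limits exist by the subadditivity recorded just before the lemma) yields $\|\varphi(g)\|_T\le\|g\|_S$.

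For the final assertion I would take $S$ to be the set of torsion elements of $G$ and $T$ the set of torsion elements of $H$, which are symmetric and conjugate-invariant with $\langle S\rangle=\Gt$; since any homomorphism sends torsion to torsion ($g^m=1$ forces $\varphi(g)^m=1$), we have $\varphi(S)\subseteq T$, and the general statement specializes to $\tl_H(\varphi(g))\le\tl_G(g)$ and $\stl_H(\varphi(g))\le\stl_G(g)$ for $g\in\Gt$. I do not expect any genuine obstacle here; the only points needing a moment of care are verifying that the left-hand word lengths are defined (i.e. that $\varphi$ carries $\langle S\rangle$ into $\langle T\rangle$) and that the stabilizing limits exist, both of which are immediate.
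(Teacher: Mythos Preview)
Your proposal is correct and follows essentially the same approach as the paper: factor $g$ minimally over $S$, push the factorization through $\varphi$ to bound $|\varphi(g)|_T$, pass to powers for the stable version, and specialize to torsion sets using that homomorphisms preserve torsion. Your write-up is somewhat more explicit (e.g., spelling out the limit argument and the well-definedness of both sides), but the argument is the same.
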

	\begin{proof}
		If $g=s_1\cdots s_n$ for some $s_i\in S$ and $n\in\Z_+$, then $\varphi(g)=\varphi(s_1)\cdots\varphi(s_n)$ where each $\varphi(s_i)\in T$ by the assumption. Thus the inequality $|\varphi(g)|_T \le |g|_S$ easily follows, which implies the stable version. The assumption clearly holds when $S$ and $T$ are the set of torsion elements in $G$ and $H$.
	\end{proof}
	
	\begin{corollary}[Retraction]\label{cor: retract}
		Suppose that $\varphi:G\to H$ and $\psi:H\to G$ are group homomorphisms such that $\psi \circ \varphi: G\to G$ is the identity. Then, 
		$$\stl_H(\phi(g))=\stl_G(g)
		$$
		for all $g\in \Gt$.
	\end{corollary}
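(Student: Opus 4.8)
The plan is to deduce this immediately from the Monotonicity Lemma (Lemma \ref{lemma: monotone}), applied once to $\varphi$ and once to $\psi$. First observe that for $g\in\Gt$ the element $\varphi(g)$ is torsion in $H$, since any homomorphism sends torsion elements to torsion elements; hence $\varphi(g)\in\Ht$ and $\stl_H(\varphi(g))$ is defined. Applying Lemma \ref{lemma: monotone} to the homomorphism $\varphi\colon G\to H$ gives the inequality $\stl_H(\varphi(g))\le\stl_G(g)$.

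For the reverse inequality, apply Lemma \ref{lemma: monotone} to $\psi\colon H\to G$, evaluated at $\varphi(g)\in\Ht$; this yields $\stl_G(\psi(\varphi(g)))\le\stl_H(\varphi(g))$. Since $\psi\circ\varphi=\id_G$ by hypothesis, the left-hand side equals $\stl_G(g)$, so $\stl_G(g)\le\stl_H(\varphi(g))$. Combining the two inequalities gives $\stl_G(g)=\stl_H(\varphi(g))$, as claimed.

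There is no substantive obstacle here: the entire content is the functoriality of $\stl$ under homomorphisms that respect the relevant generating sets, which is precisely Lemma \ref{lemma: monotone}; the only thing to verify is that $\varphi(g)$ is again torsion, which is automatic. It is worth noting that the identical two-line argument also proves the unstable statement $\tl_H(\varphi(g))=\tl_G(g)$, and more generally that if $S\subseteq G$ and $T\subseteq H$ are conjugate-invariant subsets with $\varphi(S)\subseteq T$ and $\psi(T)\subseteq S$, then $\|\varphi(g)\|_T=\|g\|_S$ for all $g\in\la S\ra$.
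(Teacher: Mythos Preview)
Your proof is correct and follows exactly the paper's approach: apply Lemma~\ref{lemma: monotone} to $\varphi$ and to $\psi$, then use $\psi\circ\varphi=\id_G$. One small wording slip: for $g\in\Gt$ you write ``$\varphi(g)$ is torsion in $H$,'' but $\Gt$ is the subgroup \emph{generated} by torsion, so $g$ (and hence $\varphi(g)$) need not itself be torsion; what you want (and correctly conclude) is that $\varphi(g)\in\Ht$, which follows since $\varphi$ sends torsion elements to torsion elements and hence $\varphi(\Gt)\subseteq\Ht$.
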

	\begin{proof}
		This follows immediately from Lemma \ref{lemma: monotone}. 
	\end{proof}
	
	\begin{lemma}[Characteristic]
		The functions $|\cdot|_S$ and $\|\cdot\|_S$ are constant on conjugate classes.
		If $S$ is also invariant under the action of $\Aut(G)$, then
		$|\cdot|_S$ and $\|\cdot\|_S$ are constant on orbits of $\Aut(G)$. 
	\end{lemma}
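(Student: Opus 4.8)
The plan is to deduce both assertions directly from the Monotonicity Lemma (Lemma~\ref{lemma: monotone}), taking $H=G$ and letting $\varphi$ run over automorphisms of $G$. The point is that conjugate-invariance of $S$ (resp. $\Aut(G)$-invariance of $S$) is exactly the hypothesis $\varphi(S)\subseteq S$ needed to apply that lemma to an inner automorphism (resp. to an arbitrary automorphism).

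First I would treat conjugacy classes. Fix $h\in G$ and let $\varphi_h\colon G\to G$ be conjugation by $h$, so $\varphi_h(g)=hgh^{-1}$. Since $S$ is conjugate-invariant we have $\varphi_h(S)\subseteq S$, and since $\langle S\rangle$ is normal we have $hgh^{-1}\in\langle S\rangle$ whenever $g\in\langle S\rangle$; thus Monotonicity gives $|hgh^{-1}|_S\le |g|_S$ and $\|hgh^{-1}\|_S\le \|g\|_S$. Applying the same inequalities to $\varphi_{h^{-1}}=\varphi_h^{-1}$ and to the element $hgh^{-1}$ produces the reverse inequalities, so in fact $|hgh^{-1}|_S=|g|_S$ and $\|hgh^{-1}\|_S=\|g\|_S$, which is the first assertion.

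For the second assertion I would repeat the argument verbatim with $\varphi_h$ replaced by an arbitrary $\alpha\in\Aut(G)$: the hypothesis that $S$ is invariant under $\Aut(G)$ gives $\alpha(S)\subseteq S$ for every $\alpha$, hence $\alpha^{-1}(S)\subseteq S$ and so $\alpha(S)=S$, which in turn makes $\langle S\rangle$ characteristic so that $\alpha(g)\in\langle S\rangle$ for $g\in\langle S\rangle$. Then Monotonicity applied to $\alpha$ and to $\alpha^{-1}$ yields $|\alpha(g)|_S=|g|_S$ and $\|\alpha(g)\|_S=\|g\|_S$, i.e. $|\cdot|_S$ and $\|\cdot\|_S$ are constant on $\Aut(G)$-orbits.

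I do not expect any genuine obstacle: the statement is a formal consequence of Monotonicity, and the only thing to be careful about is to run the estimate in both directions, using invertibility of the (inner or arbitrary) automorphism to upgrade the one-sided inequality coming from Lemma~\ref{lemma: monotone} to an equality.
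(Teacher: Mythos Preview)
Your argument is correct and is essentially what the paper does: the paper's proof is simply ``This follows from the definition,'' and your use of the Monotonicity Lemma applied to an automorphism and its inverse is just an explicit unpacking of that one-line remark. There is nothing more to add.
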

	\begin{proof}
		This follows from the definition.
	\end{proof}
	
	Thus, both $\tl_G$ and $\stl_G$ are constant on orbits of $\Aut(G)$. 
	
	\begin{lemma}[Countable subgroup] 
		Let $g$ be an element in $\Gt\le G$. There exists a countable subgroup $H<\Gt$ containing $g$ such that $\stl_H(g)=\stl_G (g)$. 
	\end{lemma}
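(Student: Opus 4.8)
The plan is to build $H$ by an exhaustion argument: collect countably many torsion factorizations that together witness the value of $\stl_G(g)$, and let $H$ be the subgroup they generate. Since $g\in\Gt$, for every $n\ge1$ the power $g^n$ lies in $\Gt$ and hence admits a factorization
$$g^n = t^{(n)}_1 t^{(n)}_2\cdots t^{(n)}_{k_n}, \qquad k_n=\tl_G(g^n),$$
into torsion elements $t^{(n)}_i$ of $G$. Let $H\le G$ be the subgroup generated by $g$ together with all the elements $t^{(n)}_i$ for $n\ge1$ and $1\le i\le k_n$. This is a countable generating set, so $H$ is countable.

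Next observe that each $t^{(n)}_i$ has finite order in $G$, hence the same finite order in $H$; in particular $t^{(n)}_i$ lies in $\Ht$, the subgroup of $H$ generated by its torsion elements. Taking $n=1$ shows that $g=t^{(1)}_1\cdots t^{(1)}_{k_1}$ is a product of torsion elements of $H$, so $g\in\Ht$ and $\stl_H(g)$ is defined. The displayed factorizations also show $\tl_H(g^n)\le k_n=\tl_G(g^n)$ for every $n$, and therefore
$$\stl_H(g)=\inf_{n}\frac{\tl_H(g^n)}{n}\le \inf_n\frac{\tl_G(g^n)}{n}=\stl_G(g).$$
For the reverse inequality, apply Lemma \ref{lemma: monotone} to the inclusion $H\hookrightarrow G$, which carries torsion elements to torsion elements; this gives $\stl_G(g)\le\stl_H(g)$. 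Combining the two inequalities yields $\stl_H(g)=\stl_G(g)$.

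There is no serious obstacle here: this is the standard countable approximation argument, and I expect it to go through verbatim. The only two points that require a moment's care are that having finite order is preserved under passing to a subgroup, so that the chosen factors remain torsion in the smaller group $H$, and that one must include the $n=1$ factorization of $g$ among the generators of $H$ in order to guarantee $g\in\Ht$ and not merely $g\in H$. (The same argument applies verbatim to any conjugate-invariant $S$, replacing $\Gt$ by $\langle S\rangle$ and using that $S\cap H$ is conjugate-invariant in $H$; here only the persistence of torsion under subgroup inclusion is used in place of that observation.)
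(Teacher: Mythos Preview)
Your proof is correct and follows essentially the same approach as the paper: both choose, for each $n$, a minimal torsion factorization of $g^n$, let $H$ be generated by all the factors, and then compare $\tl_H$ with $\tl_G$ using the explicit factorizations for one inequality and monotonicity (Lemma~\ref{lemma: monotone}) for the other. Your write-up is slightly more careful in noting that finite order persists in subgroups and that $g\in\Ht$; the extra generator $g$ you throw in is redundant (it is already a product of the $t^{(1)}_i$), but harmless.
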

	\begin{proof}
		For each $n\in \nn$, there exists $\tl(g^n)$ torsion elements whose product is $g^n$. Let $H_n$ be the group generated by those $\tl(g^n)$ torsion elements and let $H$ be the group generated by $\bigcup_n H_n$. Then $H$ is countable and $\tl_{H}(g^n)\leq\tl_{G}(g^n)$ since we have exhibited each $g^n$ as the product of $\tl_{G}(g^n)$ torsion elements in $H$. On the other hand, the inequality $\tl_{H}(g^n)\geq\tl_{G}(g^n)$ follows from Lemma \ref{lemma: monotone}.   
	\end{proof}
	
	A similar statement holds for general stable word length. 
	
	
	Finally, there is an inequality relating the stable commutator length and the stable torsion length. This can be found in \cite{Kotschick}, but we give a conceptually simpler proof using Bavard's duality. Since Bavard's duality and related notions are not used in the rest of the paper, we refer readers to \cite[Chapter 2]{Cal:sclbook}.
	
	\begin{lemma}[Kotschick \cite{Kotschick}] \label{lemma: scl inequality}
		Let $G$ be a group. For any $g\in \Gt \cap [G,G]$, we have
		$$2 \scl(g) \leq \stl(g).$$
	\end{lemma}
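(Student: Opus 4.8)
The plan is to deduce the inequality directly from Bavard's duality together with two elementary facts about homogeneous quasimorphisms. Recall that a homogeneous quasimorphism $\phi\colon G\to\R$ has defect $D(\phi)=\sup_{a,b\in G}|\phi(ab)-\phi(a)-\phi(b)|$, and that Bavard's duality gives
$$\scl(g)=\sup_{\phi}\frac{|\phi(g)|}{2\,D(\phi)},$$
where $\phi$ ranges over all homogeneous quasimorphisms with $D(\phi)>0$. Thus it suffices to prove that $|\phi(g)|\le D(\phi)\,\stl(g)$ for every such $\phi$.

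First I would note that a homogeneous quasimorphism vanishes on every torsion element: if $t^m=1$, then homogeneity gives $m\,\phi(t)=\phi(t^m)=\phi(1)=0$, so $\phi(t)=0$. Next, for a fixed $n\ge1$ write $g^n=s_1\cdots s_k$ with each $s_i$ torsion and $k=\tl(g^n)$. Iterating the quasimorphism inequality yields $\bigl|\phi(g^n)-\sum_{i=1}^k\phi(s_i)\bigr|\le (k-1)\,D(\phi)$; since each $\phi(s_i)=0$ and $\phi(g^n)=n\,\phi(g)$ by homogeneity, this becomes $n\,|\phi(g)|\le(\tl(g^n)-1)\,D(\phi)$.

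Dividing by $n$ and letting $n\to\infty$, and using that $\tl(g^n)/n\to\stl(g)$, I obtain $|\phi(g)|\le D(\phi)\,\stl(g)$. Plugging this into Bavard's duality gives $\scl(g)\le\tfrac12\,\stl(g)$, which is the claim. If $g$ itself has finite order, then $\scl(g)=\stl(g)=0$ and there is nothing to prove, so one may assume $g^n\ne1$ for all $n$, which ensures $k\ge1$ in the step above.

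There is no genuine obstacle here: once one has Bavard's duality, the proof is essentially the one-line observation that torsion elements are invisible to homogeneous quasimorphisms. The only points needing a moment's attention are this vanishing statement and the bookkeeping of the defect across a product of $k$ torsion elements.
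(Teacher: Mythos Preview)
Your proof is correct and follows essentially the same route as the paper: both arguments combine Bavard's duality with the observation that homogeneous quasimorphisms vanish on torsion elements, then bound $|\phi(g^n)|$ by $(\tl(g^n)-1)D(\phi)$ and pass to the limit. The only cosmetic difference is that the paper disposes of the degenerate case by assuming every quasimorphism has zero defect (forcing $\scl(g)=0$), whereas you handle it by noting $g$ torsion gives $\scl(g)=\stl(g)=0$.
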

	\begin{proof}
		Let $\varphi:G\to \rr$ be a homogeneous quasimorphism. If the defect $D(\varphi)=0$ for every $\varphi$, then $\scl(g)=0$, and the inequality holds trivially. 
		Now assume that $D(\varphi)>0$. If $g=t_1\cdots t_{\tl(g)}$ for some torsion elements $t_i$, then 
		we have 
		$$\left|\varphi(g)-\sum_{i=1}^{\tl(g)}\varphi(t_i)\right|\leq (\tl(g)-1)D(\varphi).$$
		Since each $t_i$ is torsion and $\varphi$ is homogeneous, we know $\varphi(t_i)=0$ for all $i$, and thus
		$$\frac{|\varphi(g)|}{D(\varphi)}\leq \tl(g)-1.$$
		Using that $\varphi$ is homogeneous, applying this to $g^n$ for any $n\in \nn$, we have 
		$$\frac{|\varphi(g)|}{D(\varphi)}\leq \frac{\tl(g^n)-1}{n}.	$$
		The Bavard's duality states that the supremum of the left-hand side over all homogeneous quasimorphisms is $2\scl(g)$.
		As the limit of the right-hand side is $\stl(g)$, this gives the desired inequality.
	\end{proof}
	
	\begin{remark} \label{remark: hyperbolic}
	A theorem of Epstein--Fujiwara \cite{hypbddcohom} implies that any non-elementary hyperbolic group $G$ has an infinite dimensional space of homogeneous quasimorphisms, which implies that $\scl_G(g)>0$ for some $g$. Therefore, by Lemma \ref{lemma: scl inequality}, if $G$ is a non-elementary hyperbolic group generated by torsion, then $\stl_G(g)>0$ for some $g$, and (stable) torsion length is unbounded in $G$.
	
	\end{remark}
	
	
	\subsection{A topological point of view}\label{subsec: topological}
	Fix the group $G$ and conjugate-invariant subset $S$, which we will assume to be closed under taking powers in Lemma \ref{lemma: stl via surfaces} below.
	Let $X$ be a space with fundamental group $G$ and let $\gamma$ be a loop representing $g$.
	\begin{definition}
		Let $\Sigma$ be a compact, oriented, connected \emph{planar} (i.e. genus zero) surface and let $f:\Sigma\to X$. We say that $(\Sigma,f)$ is \emph{$S$-admissible}
		for $g$ of degree $n(\Sigma,f)$ if $\Sigma$ has a specified boundary component $\partial_0 \Sigma$ such that
		$f:\Sigma\to X$ takes $\partial_0 \Sigma$ to $\gamma$, winding around $n(\Sigma,f)$ times, and the image of all other boundary components are loops representing  conjugacy classes in $S$ (see Figure \ref{fig: admissible}).
		
		When $S$ is the set of torsion elements in $G$, we refer to $(\Sigma,f)$ as a \emph{torsion-admissible surface} instead. We often denote a torsion-admissible surface by $\Sigma$ instead of $(\Sigma,f)$ to make $f$ implicit.
	\end{definition}
	
	We refer to the boundary components of $\Sigma$ other than $\partial_0 \Sigma$ as \emph{holes}. Denote by $H(\Sigma)$ the number of holes on $\Sigma$. Note that $-\chi(\Sigma)=H(\Sigma)-1$.
	For a connected surface $\Sigma$, let $\chi^-(\Sigma)$ be $\chi(\Sigma)$ unless $\Sigma$ is a sphere or a disk, in which case we define $\chi^-(\Sigma)$ to be $0$.
	
	\begin{lemma}\label{lemma: stl via surfaces}
		Suppose $s^n\in S$ for any $s\in S$ and $n\in\Z$. For any $g\in\langle S\rangle$ we have
		\begin{equation}\label{eqn: stl via surfaces}
			\|g\|_S=\inf_\Sigma \frac{H(\Sigma)}{n(\Sigma)}=\inf_\Sigma \frac{-\chi^-(\Sigma)}{n(\Sigma)},
		\end{equation}
		where each infimum is taken over all $S$-admissible surfaces $\Sigma$.
	\end{lemma}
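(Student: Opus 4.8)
The plan is to build a dictionary between $S$-admissible surfaces for $g$ and factorizations of powers of $g$ into elements of $S$, read off the first equality directly from this dictionary, and then deduce the second equality by an $m$-fold cyclic covering construction in which the hypothesis that $S$ is closed under powers enters in an essential way.

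\textbf{The dictionary and the first equality.} Given a factorization $g^n=s_1\cdots s_k$ with each $s_i\in S$, take $\Sigma$ to be a disk with $k$ disjoint open subdisks removed; then $\Sigma$ is connected, oriented and planar, $\pi_1(\Sigma)$ is free on loops $\ell_1,\dots,\ell_k$ encircling the removed disks, and the outer boundary circle $\partial_0$ satisfies $[\partial_0]=\ell_1\cdots\ell_k$. Mapping the $i$-th inner circle to a loop representing $s_i$ and extending over $\Sigma$ (which retracts onto a wedge of $k$ circles) produces $f\colon\Sigma\to X$ under which $\partial_0$ represents $g^n$; attaching a collar and using a free homotopy to $\gamma^n$, we may assume $f|_{\partial_0}$ winds $n$ times around $\gamma$, so $(\Sigma,f)$ is $S$-admissible of degree $n$ with $H(\Sigma)=k$ holes. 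Taking $k=|g^n|_S$ and letting $n$ range gives $\inf_\Sigma H(\Sigma)/n(\Sigma)\le\inf_n|g^n|_S/n=\|g\|_S$. Conversely, let $(\Sigma,f)$ be any $S$-admissible surface of degree $n:=n(\Sigma)$ with $H:=H(\Sigma)$ holes. Since $\Sigma$ has genus zero and the boundary component $\partial_0$, we may regard $\Sigma$ as a disk with $H$ subdisks removed, so $\pi_1(\Sigma)$ is free on loops $\ell_1,\dots,\ell_H$ around the holes with $[\partial_0]=\ell_1\cdots\ell_H$. Each $f_*(\ell_i)$ is conjugate to an element of $S$, hence lies in $S$ because $S$ is conjugation-invariant, and $f_*([\partial_0])=c\,g^n c^{-1}$ for some $c\in G$; substituting into $f_*([\partial_0])=f_*(\ell_1)\cdots f_*(\ell_H)$ and conjugating by $c^{-1}$ exhibits $g^n$ as a product of $H$ elements of $S$. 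Hence $|g^n|_S\le H(\Sigma)$, so $H(\Sigma)/n(\Sigma)\ge|g^{n(\Sigma)}|_S/n(\Sigma)\ge\|g\|_S$. Combining the two inequalities proves $\|g\|_S=\inf_\Sigma H(\Sigma)/n(\Sigma)$.

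\textbf{The second equality.} Since $-\chi(\Sigma)=H(\Sigma)-1$ and $\Sigma$ is a disk exactly when $H(\Sigma)=0$, we have $-\chi^-(\Sigma)=\max(H(\Sigma)-1,0)\le H(\Sigma)$ for every admissible $\Sigma$, so $\inf_\Sigma -\chi^-(\Sigma)/n(\Sigma)\le\inf_\Sigma H(\Sigma)/n(\Sigma)=\|g\|_S$. For the reverse inequality it suffices to prove $\|g\|_S\le -\chi^-(\Sigma)/n(\Sigma)$ for each fixed admissible $\Sigma$. If $H(\Sigma)=0$ then $g^{n(\Sigma)}=1$, so $g$ has finite order, $|g^m|_S$ is bounded in $m$, and $\|g\|_S=0=-\chi^-(\Sigma)/n(\Sigma)$. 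Otherwise write $H:=H(\Sigma)\ge1$ and $n:=n(\Sigma)$, fix generators $\ell_1,\dots,\ell_H$ of $\pi_1(\Sigma)$ around the holes as above, and for each $m\ge1$ let $p_m\colon\widetilde\Sigma_m\to\Sigma$ be the connected $m$-fold cyclic cover classified by the surjection $\pi_1(\Sigma)\to\Z/m\Z$ sending $\ell_1\mapsto1$ and $\ell_i\mapsto0$ for $i\ge2$. Then $\partial_0$ (whose class is $\ell_1\cdots\ell_H\mapsto1$) and the first hole ($\ell_1\mapsto1$) each lift to a single boundary circle of $\widetilde\Sigma_m$ winding $m$ times, while every other hole lifts to $m$ disjoint boundary circles; counting these $2+m(H-1)$ boundary components against $\chi(\widetilde\Sigma_m)=m\chi(\Sigma)=m(1-H)$ forces the genus of $\widetilde\Sigma_m$ to be zero, so $\widetilde\Sigma_m$ is planar. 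The composite $f\circ p_m$ makes $\widetilde\Sigma_m$ an $S$-admissible surface for $g$ of degree $mn$: the lift of $\partial_0$ winds $mn$ times around $\gamma$, each of the $m$ lifts of the $i$-th hole ($i\ge2$) maps to a loop representing $s_i\in S$, and the single lift of the first hole maps to a loop representing $s_1^m$, which lies in $S$ precisely because $S$ is closed under taking powers. Since $H(\widetilde\Sigma_m)=1+m(H-1)$, we obtain
\[
\|g\|_S\le\frac{H(\widetilde\Sigma_m)}{n(\widetilde\Sigma_m)}=\frac{1+m(H-1)}{mn},
\]
and letting $m\to\infty$ gives $\|g\|_S\le (H-1)/n=-\chi^-(\Sigma)/n(\Sigma)$, as desired.

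\textbf{Where the difficulty lies.} The surface-to-word and word-to-surface translations are routine bookkeeping with fundamental groups of planar surfaces. The real content is the covering step: one must choose the homomorphism to $\Z/m\Z$ so that the resulting cover is again planar (and the relation $[\partial_0]=\ell_1\cdots\ell_H$ shows that no cover can simultaneously keep $\partial_0$ connected and split every hole, so exactly one hole is forced to wind), and one must know that the boundary circle forced to wind still represents an element of $S$ --- this is exactly where closure under powers is used. Granting these, the harmless $+1$ discrepancy between $H(\Sigma)$ and $-\chi^-(\Sigma)$ disappears in the limit $m\to\infty$, which is what makes the second equality hold.
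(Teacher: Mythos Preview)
Your proof is correct and follows essentially the same approach as the paper: the first equality is the dictionary between factorizations and planar surfaces, and the second is obtained by passing to a cyclic planar cover in which $\partial_0\Sigma$ has a single preimage and then letting the degree tend to infinity, using closure of $S$ under powers to ensure the wound hole still represents an element of $S$. Your treatment is in fact more explicit than the paper's (which defers the cover construction to a figure), since you specify the classifying homomorphism $\ell_1\mapsto 1$, $\ell_i\mapsto 0$ and verify planarity by an Euler characteristic count.
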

	\begin{proof}
		The first equality is simply the topological reformulation of the algebraic definition. 
		For the second, note that if $g$ is torsion then $g^n$ bounds a disk for some $n$ and all three quantities are zero in this case.
		Suppose $g$ is not a torsion element. Then for any $S$-admissible surface $\Sigma$ we have $-\chi^-(\Sigma)=-\chi(\Sigma)$.
		In particular, $-\chi^-(\Sigma)=H(\Sigma)-1\le H(\Sigma)$.
		Thus it suffices to show that
		$$\inf_\Sigma \frac{H(\Sigma)}{n(\Sigma)}\le\inf_\Sigma \frac{-\chi^-(\Sigma)}{n(\Sigma)}.$$
		Note that given any $S$-admissible surface $\Sigma$ and any $N\in \Z_+$, 
		$\Sigma$ has a degree $N$ cover $\Sigma_N$ with genus zero such that the preimage of $\partial_0\Sigma$
		is a single boundary component; see Figure \ref{fig: planar cover}. 
		Note that any other boundary component of $\Sigma_N$ covers a boundary component of $\Sigma$ different from $\partial_0\Sigma$. Since $S$ is closed under taking powers, $\Sigma_N$ is $S$-admissible of degree $n(\Sigma_N)=N\cdot n(\Sigma)$.
		Thus 
		$$\inf_{\Sigma'} \frac{H(\Sigma')}{n(\Sigma')}\le \frac{H(\Sigma_N)}{n(\Sigma_N)}=\frac{-\chi^-(\Sigma_N)+1}{n(\Sigma_N)}=\frac{-N\cdot\chi^-(\Sigma)+1}{N\cdot n(\Sigma)}.$$
		Taking $N\to\infty$ proves the desired inequality.
	\end{proof}	
	
	\begin{figure}
		\centering
		\labellist
		\small \hair 2pt
		\pinlabel $\Sigma$ at 100 320
		\pinlabel $\Sigma_N$ at 350 320
		\pinlabel $\partial_0\Sigma$ at 100 -20
		\endlabellist
		\includegraphics[scale=.3]{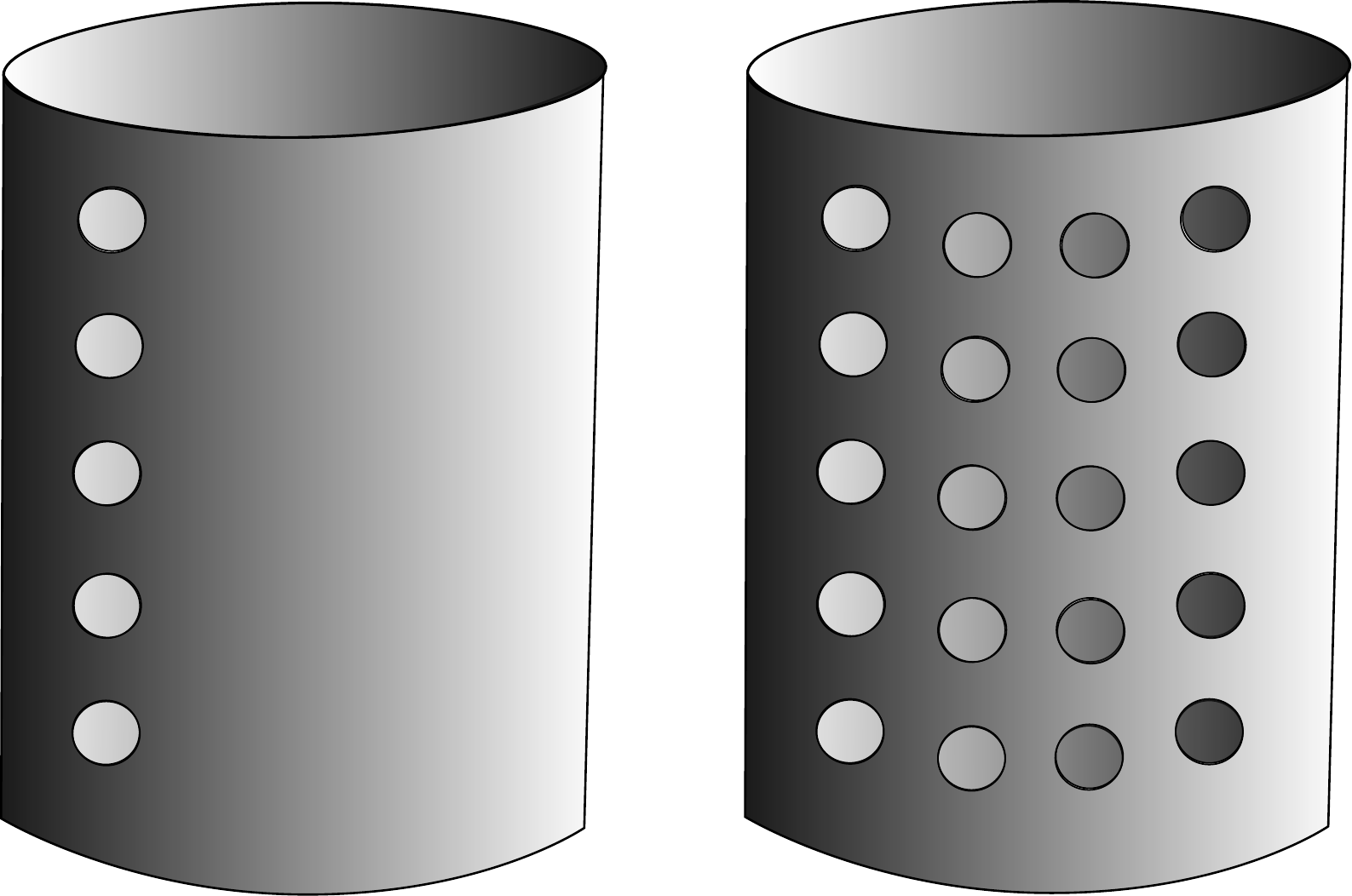}
		\caption{On the left is an $S$-admissible surface $\Sigma$ and on the right is a planar degree $N=4$ cover $\Sigma_N$ of $\Sigma$ 
	    such that the preimage of $\partial_0 \Sigma$ is a single boundary component}
		\label{fig: planar cover}
	\end{figure}
	\begin{remark}
		We can replace $-\chi^-(\Sigma)$ by $-\chi(\Sigma)$ for any $S$-admissible surface $\Sigma$ whenever $g$ is not torsion .
	\end{remark}
	
	\section{Crystallographic groups}\label{sec: crystallographic}
	Let $E(n)$ be the isometry group of the Euclidean space $\R^n$. 
	Each element $\gamma\in E(n)$ acts on $\R^n$ via $\gamma(x)=Ax+v$ for some uniquely determined orthogonal transformation $A\in O(n)$ and vector $v\in\R^n$. 
	We refer to $A$ as the \emph{rotational part} of $\gamma$ and $v$ as the \emph{translational part} of $\gamma$.
	
	A \emph{crystallographic group} $\Gamma$ of dimension $n$ is a cocompact discrete subgroup of $E(n)$. 
	By a theorem of Bieberbach \cite{Bieberbach1912} (see also \cite{Crystbook})
	for any such $\Gamma$, the subgroup $H$ acting by translations is a normal subgroup of $\Gamma$ of finite index and isomorphic to $\Z^n$.
	So we have an exact sequence
	$$1\longrightarrow H \longrightarrow \Gamma \longrightarrow G \longrightarrow 1,$$
	where $G$ is a finite subgroup of $O(n)$, and the map $\Gamma\to G$ takes the rotational part $A\in G$ of any $\gamma\in\Gamma$.
	
	Let $\Gat$ be the subgroup of $\Gamma$ generated by torsion elements.
	We show the following more precise version of Theorem \ref{introthm: vanishing}.
	\begin{theorem}\label{thm: vanishing}
		For any crystallographic group $\Gamma$, the torsion subgroup $\Gat$ is boundedly generated by torsion elements, and thus $\stl_\Gamma\equiv0$.
	\end{theorem}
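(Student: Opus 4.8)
The plan is to prove the stronger, purely group-theoretic statement that $\Gat$ is \emph{boundedly} generated by torsion elements: there is a constant $D=D(\Gamma)$ such that every element of $\Gat$ is a product of at most $D$ torsion elements. This gives both assertions at once, since then $\tl_\Gamma(g^n)\le D$ for all $n$ and all $g\in\Gat$, whence $\stl_\Gamma(g)=\inf_n \tl_\Gamma(g^n)/n=0$, and the Cayley graph of $\Gamma$ (in the case $\Gamma=\Gat$) with respect to the set of torsion elements has diameter at most $D$.

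I represent each $\gamma\in\Gamma$ as a pair $(A,v)$ acting by $x\mapsto Ax+v$, so that $H=\{(I,v):v\in L\}$ for a lattice $L\cong\Z^n$ preserved by $G$, and $(A,v)$ is torsion precisely when $A$ has some finite order $m$ and $N_Av=0$, where $N_A\defeq\sum_{i=0}^{m-1}A^i$. Let $R\subseteq G$ be the set of rotational parts of torsion elements of $\Gamma$; since the torsion elements form a conjugation-invariant set and $\Gamma\to G$ is onto, $R$ is finite and closed under conjugation in $G$. Now I split the problem along the exact sequence. Choosing once and for all finitely many representatives $\sigma_1,\dots,\sigma_k$ (with $k\le|G|$) of the cosets of $L_0\defeq\Gat\cap H$ in $\Gat$, each a finite — hence bounded-length — product of torsion elements, it remains only to bound $\tl_\Gamma$ uniformly on the abelian group $L_0$, because every element of $\Gat$ has the form $\sigma_i\ell$ with $\ell\in L_0$.

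There are two ingredients. The first is an elementary conjugation identity: for any torsion element $\tau=(A,v)$ and \emph{any} $w\in L$ one has $(I,(I-A)w)=\big((I,w)\,\tau\,(I,-w)\big)\cdot\tau^{-1}$, a product of \emph{two} torsion elements irrespective of the size of $w$. Hence every vector of the sublattice $\Lambda\defeq\sum_{A\in R}(I-A)L\le L$ yields a translation that is a product of at most $2|R|$ torsion elements, being a sum of at most $|R|$ vectors each lying in some $(I-A)L$. The second ingredient is linear-algebraic: an orthogonal map $A$ of finite order $m$ satisfies $\mathrm{Im}(I-A)=\ker N_A$ (split $\R^n$ into the $1$-eigenspace of $A$, on which $N_A=mI$, and its orthogonal complement, on which $N_A=0$). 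Since each $A\in R$ is integral on $L$, this identity shows $(I-A)L$ has finite index in $\ker N_A\cap L$, hence $\Lambda$ has finite index in $\sum_{A\in R}(\ker N_A\cap L)$, which in turn has finite index in $\big(\sum_{A\in R}\ker N_A\big)\cap L$. It thus suffices to show $L_0\subseteq\big(\sum_{A\in R}\ker N_A\big)\cap L$; then $\Lambda$ has finite index in $L_0$, and combining the conjugation identity with bounded-length representatives of the finitely many cosets of $\Lambda$ in $L_0$ bounds $\tl_\Gamma$ uniformly on $L_0$. For this containment I write $\ell\in L_0$ as a product $(A_1,v_1)\cdots(A_s,v_s)$ of torsion elements; using $v_i\in\ker N_{A_i}=\mathrm{Im}(I-A_i)$ to put $v_i=(I-A_i)p_i$ and telescoping the composition rule, the translation vector of $\ell$ becomes $\sum_i(I-C_i)q_i$ with $C_i=(A_1\cdots A_{i-1})A_i(A_1\cdots A_{i-1})^{-1}$, which lies in $R$ by conjugation-invariance, and $q_i\in\R^n$; so this vector lies in $\sum_{A\in R}\mathrm{Im}(I-A)=\sum_{A\in R}\ker N_A$, as needed.

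The step I expect to be the main obstacle is precisely the passage from "product of arbitrarily many torsion elements" to "product of boundedly many": a translation in $\Gat$ can be assembled from many torsion factors and can carry an arbitrarily long translation vector, so a naive count fails. The resolution is the uniform (in $w$) two-torsion-element formula for $(I,(I-A)w)$, together with the fact — delivered by $\mathrm{Im}(I-A)=\ker N_A$ and the telescoping computation — that these special translations already span a finite-index subgroup of $\Gat\cap H$. The arithmetic bookkeeping (verifying the conjugation identity, checking the eigenspace decomposition of $N_A$, and the commensurability comparisons among the lattices $(I-A)L$, $\ker N_A\cap L$, and $\big(\sum_{A\in R}\ker N_A\big)\cap L$) is routine once this structure is in place.
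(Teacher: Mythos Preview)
Your proposal is correct and follows essentially the same strategy as the paper: reduce to the translation subgroup $L_0=\Gat\cap H$ via coset representatives, use the two-torsion-element identity $T_{(I-A)w}=[\tau,T_w]$ uniformly in $w$, and show these special translations span a finite-index sublattice of $L_0$.

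The organizational differences are minor but worth noting. The paper first iterates the commutator identity (its Lemma~3.3) to obtain a uniform torsion-length bound for $T_{(A-I)h}$ for \emph{all} $A$ in the point group $\Gt$ of $\Gat$, not just the rotational parts $R$ of torsion elements; it then proves the span statement (its Lemma~3.5) by an induction on word length in torsion generators, showing the translational part of every $\gamma\in\Gat$ lies in $\sum_{A\in\Gt}\mathrm{Im}(I-A)$. You bypass the iteration by working directly with $\Lambda=\sum_{A\in R}(I-A)L$, bounding each element of $\Lambda$ by $2|R|$ torsion factors, and you replace the paper's span argument with the eigenspace identity $\mathrm{Im}(I-A)=\ker N_A$ together with your telescoping computation and the conjugation-invariance of $R$. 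Since $R$ generates $\Gt$ and $I-AB=(I-A)+A(I-B)$, the two spans coincide, so the two routes produce the same finite-index sublattice. Your packaging is slightly more economical; the paper's iteration lemma gives a somewhat more explicit bound.
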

	
	Let $\Ht\defeq H\cap \Gat$ and let $\Gt$ be the image of $\Gat$ in $G\le O(n)$. Then $\Ht$ is a free abelian subgroup of $H$ and is finite index in $\Gat$. We have
	$$1\longrightarrow \Ht \longrightarrow \Gat \longrightarrow \Gt \longrightarrow 1.$$
	
	Here we think of $H$ both as the translation subgroup of $\Gamma$ and as a lattice in $\R^n$, where each vector $h$ of the lattice corresponds to the translation $T_h:x\mapsto x+h$. To avoid confusion, we use $h$ to represent an element of $H$ when we regard $H$ as a lattice and use $T_h$ when we regard $H$ as the translation subgroup of $\Gamma$. Note that $T_{nh}=T_h^n$ for any $n\in\Z$ and $T_{h+h'}=T_h\cdot T_{h'}$ for all $h,h'\in H$.
	
	We prove Theorem \ref{thm: vanishing} by constructing a finite index subgroup $H_0$ of $\Ht$ that is boundedly generated by torsion in $\Gat$.
	We start by finding elements in $\Ht$ that can be written as a product of few torsion elements.
	
	\begin{lemma}\label{lemma: commutator with torsion}
		Suppose $\gamma\in\Gat$ is a torsion element with rotational part $A\in \Gt$ and let $I$ be the identity element of $O(n)$. Then for any $h\in H$, we have $(A-I)h\in \Ht$, and the corresponding translation $T_{(A-I)h}=[\gamma,T_h]$ is a product of two torsion elements.
	\end{lemma}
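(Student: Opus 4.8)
The plan is a one-step computation inside the affine group $E(n)=O(n)\ltimes\R^n$, followed by a factorization trick. Write $\gamma(x)=Ax+v$ with $A\in O(n)$ and $v\in\R^n$, and recall $T_h(x)=x+h$. Since $\gamma^{-1}(x)=A^{-1}x-A^{-1}v$, a direct calculation gives
$$\gamma T_h\gamma^{-1}(x)=\gamma\bigl(A^{-1}x-A^{-1}v+h\bigr)=x+Ah,$$
so $\gamma T_h\gamma^{-1}=T_{Ah}$. (This just records that conjugation by $\gamma$ acts on the translation lattice $H$ through its rotational part $A$.) Hence, using the convention $[a,b]=aba^{-1}b^{-1}$,
$$[\gamma,T_h]=\gamma T_h\gamma^{-1}T_h^{-1}=T_{Ah}T_h^{-1}=T_{Ah-h}=T_{(A-I)h}.$$
Since $\gamma\in\Gamma$ and $T_h\in\Gamma$ (as $h\in H$), the commutator lies in $\Gamma$; being equal to $T_{(A-I)h}$ it is a translation, so $(A-I)h$ lies in the lattice $H$.

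It remains to see that this translation lies in $\Gat$, not merely in $H$. For this I would factor
$$[\gamma,T_h]=\gamma\cdot\bigl(T_h\gamma^{-1}T_h^{-1}\bigr).$$
The first factor $\gamma$ is torsion by hypothesis, and the second factor is a conjugate of the torsion element $\gamma^{-1}$, hence itself torsion of the same order. Both factors are elements of $\Gamma$, so both lie in $\Gat$, and therefore so does their product $[\gamma,T_h]$. Combining with the previous paragraph, $(A-I)h\in H\cap\Gat=\Ht$, and $T_{(A-I)h}=[\gamma,T_h]$ is exhibited as a product of two torsion elements, as claimed.

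There is essentially no serious obstacle here: the content is the identity $\gamma T_h\gamma^{-1}=T_{Ah}$ together with the observation that a commutator $\gamma T_h\gamma^{-1}T_h^{-1}$ regroups as a product of $\gamma$ and a conjugate of $\gamma^{-1}$. The only points requiring care are fixing the affine-action conventions (so that $v$ drops out and one genuinely gets $(A-I)h$ rather than, say, $(I-A^{-1})h$) and remembering that conjugates of torsion elements are torsion so that both factors land in $\Gat$.
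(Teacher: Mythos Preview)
Your proof is correct and follows essentially the same approach as the paper: compute $\gamma T_h\gamma^{-1}=T_{Ah}$, deduce $[\gamma,T_h]=T_{(A-I)h}$, and factor the commutator as $\gamma\cdot(T_h\gamma^{-1}T_h^{-1})$ to exhibit it as a product of two torsion elements. The only cosmetic difference is that the paper writes $\gamma$ using its fixed point ($\gamma(x)=A(x-p)+p$, available since $\gamma$ is torsion) rather than the general affine form $\gamma(x)=Ax+v$, but as you observed the translational part drops out of the conjugation either way.
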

	\begin{proof}
		Since $\gamma$ is a torsion element, it must fix some point $p\in\R^n$ and acts by
		$\gamma(x)=A(x-p)+p$ for any $x\in\R^n$. Hence
		$$\gamma T_h \gamma^{-1} (x)= A[A^{-1}(x-p) +p+h-p]+p=x+Ah,$$
		so $[\gamma, T_h](x)=x+Ah-h=x+(A-I)h=T_{(A-I)h}(x)$.
		This shows that $T_{(A-I)h}=[\gamma,T_h]$, which is a product of two torsion elements,
		as $[\gamma,T_h]=\gamma\cdot (T_h\gamma^{-1} T_h^{-1})$. 
		It also shows that $T_{(A-I)h}=[\gamma,T_h]=(\gamma T_h \gamma^{-1})\cdot T_h^{-1}$ is an element of $H$.
		Thus $(A-I)h\in H\cap\Gat=\Ht$.
	\end{proof}
	
	Iterating the previous lemma, we control the torsion length for a larger family of elements in $\Ht$.
	
	\begin{lemma}\label{lemma: iterated commutator}
		For any $m\ge1$ and $1\le i\le m$, let $\gamma_i\in \Gat$ be a torsion element with rotational part $A_i\in \Gt$. Then for any $h\in H$, we have $(A_1\cdots A_m-I)h\in \Ht$, and the corresponding translation $T_{(A_1\cdots A_m-I)h}$ is a product of $4m-2$ torsion elements.
	\end{lemma}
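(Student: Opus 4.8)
The plan is to prove this by induction on $m$, with the base case $m=1$ being exactly Lemma \ref{lemma: commutator with torsion}: there $T_{(A_1-I)h}=[\gamma_1,T_h]$ is shown to lie in $\Ht$ and to be a product of $2=4\cdot 1-2$ torsion elements. For the inductive step the only algebraic input needed is the telescoping identity
$$A_1A_2\cdots A_m-I=A_1\bigl(A_2\cdots A_m-I\bigr)+(A_1-I),$$
which gives, for any $h\in H$,
$$(A_1\cdots A_m-I)h=A_1\bigl((A_2\cdots A_m-I)h\bigr)+(A_1-I)h,$$
together with the conjugation formula $\gamma_1 T_v\gamma_1^{-1}=T_{A_1 v}$ for all $v\in H$. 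This formula follows from the same computation as in the proof of Lemma \ref{lemma: commutator with torsion}: since $\gamma_1$ is torsion it fixes a point $p$ and acts by $x\mapsto A_1(x-p)+p$, whence $\gamma_1 T_v\gamma_1^{-1}(x)=x+A_1 v$.

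Now set $v\defeq (A_2\cdots A_m-I)h$. Applying the inductive hypothesis to the family $\gamma_2,\dots,\gamma_m$ (with rotational parts $A_2,\dots,A_m$), we get $v\in\Ht$ and $T_v$ a product of $4(m-1)-2=4m-6$ torsion elements, all in $\Gat$. Conjugating each of these $4m-6$ torsion factors by $\gamma_1$ and using $\gamma_1 T_v\gamma_1^{-1}=T_{A_1v}$ shows $T_{A_1v}$ is again a product of $4m-6$ torsion elements lying in $\Gat$. By Lemma \ref{lemma: commutator with torsion}, $T_{(A_1-I)h}=[\gamma_1,T_h]$ is a product of $2$ torsion elements in $\Gat$. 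Since $T_{w+w'}=T_wT_{w'}$, the identity above yields
$$T_{(A_1\cdots A_m-I)h}=T_{A_1v}\cdot T_{(A_1-I)h},$$
a product of $(4m-6)+2=4m-4\le 4m-2$ torsion elements, and an element of $\Gat\cap H=\Ht$; padding with copies of the identity (an element of order $1$) gives exactly $4m-2$ if one wants the stated count verbatim. This closes the induction. Alternatively, iterating the identity all the way gives $A_1\cdots A_m-I=\sum_{i=1}^m A_1\cdots A_{i-1}(A_i-I)$, hence
$$T_{(A_1\cdots A_m-I)h}=\prod_{i=1}^m(\gamma_1\cdots\gamma_{i-1})\,[\gamma_i,T_h]\,(\gamma_1\cdots\gamma_{i-1})^{-1},$$
which displays it directly as a product of $2m$ torsion elements of $\Gat$.

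I do not expect a genuine obstacle here: the statement is a bookkeeping-heavy consequence of Lemma \ref{lemma: commutator with torsion} and the translation-conjugation identity. The only points requiring mild care are (i) confirming that conjugating a product of torsion elements by $\gamma_1$ again gives a product of the same number of torsion elements, which is immediate because conjugates of torsion elements are torsion, and (ii) tracking the constant so that it does not exceed $4m-2$, which the recursion above handles with room to spare. The membership $(A_1\cdots A_m-I)h\in\Ht$ then comes for free, since a translation that is a product of torsion elements automatically lies in $H\cap\Gat$.
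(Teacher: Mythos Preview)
Your proof is correct and in fact sharper than the paper's. Both argue by induction from Lemma \ref{lemma: commutator with torsion}, but the decompositions differ. The paper uses the three-term identity
\[
(A_1\cdots A_m-I)=(A_1-I)(A_2\cdots A_m-I)+(A_1-I)+(A_2\cdots A_m-I),
\]
writing $T_{(A_1\cdots A_m-I)h}$ as $[\gamma_1,T_v]\cdot[\gamma_1,T_h]\cdot T_v$ with $v=(A_2\cdots A_m-I)h$, and counting $2+2+(4m-6)=4m-2$ torsion factors. You instead use the two-term telescoping $A_1\cdots A_m-I=A_1(A_2\cdots A_m-I)+(A_1-I)$ and handle the first summand by conjugation, $T_{A_1v}=\gamma_1 T_v\gamma_1^{-1}$, which preserves the number of torsion factors; this yields $(4m-6)+2=4m-4$. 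Your closed-form expression $\prod_{i=1}^m(\gamma_1\cdots\gamma_{i-1})[\gamma_i,T_h](\gamma_1\cdots\gamma_{i-1})^{-1}$ pushes this further to $2m$. The paper's version avoids invoking the conjugation formula $\gamma T_v\gamma^{-1}=T_{Av}$ for general $v\in H$ (it only needs the commutator identity already proved), while your version uses that formula to get a cleaner recursion and a better constant. Since the lemma is only used to feed into Lemma \ref{lemma: uniform bound}, where any linear bound in $m$ suffices, the improvement is cosmetic for the paper's purposes, but your argument is the more natural one.
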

	\begin{proof}
		We prove this by induction on $m$. The base case $m=1$ follows from Lemma \ref{lemma: commutator with torsion}. Suppose this holds for $m-1$. Then
		\begin{align*}
			&[\gamma_1,T_{(A_2\cdots A_m-I)h}]\cdot[\gamma_1,T_h]\cdot T_{(A_2\cdots A_m-I)h}\\
			=&T_{(A_1-I)(A_2\cdots A_m-I)h}\cdot T_{(A_1-I)h}\cdot T_{(A_2\cdots A_m-I)h}\\
			=&T_{(A_1\cdots A_m - I)h}
		\end{align*}
		by Lemma \ref{lemma: commutator with torsion}. Note that the first row is the product of $2+2+(4m-6)=4m-2$ torsion elements by Lemma \ref{lemma: commutator with torsion} and the induction hypothesis.
	\end{proof}
	
	As a consequence, we can uniformly bound the torsion length of all elements in $\Ht$ of the form $(A-I)h$ for any $h\in H$ and any $A\in \Gt$.
	\begin{lemma}\label{lemma: uniform bound}
		With the notation above, there is some $M$ such that for any $h\in H$ and any $A\in \Gt$ we have $(A-I)h\in \Ht$, and the corresponding translation $T_{(A-I)h}$ is a product of at most $M$ torsion elements in $\Gamma$.
	\end{lemma}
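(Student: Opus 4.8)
The plan is to reduce the statement to Lemma \ref{lemma: iterated commutator} by exploiting the finiteness of $\Gt$. Since $\Gat$ is generated by torsion elements of $\Gamma$ and the quotient map $\Gamma\to G$ sends a torsion element to its rotational part, the finite group $\Gt$ is generated by the symmetric set
$$T\defeq\{\,A\in O(n) : A \text{ is the rotational part of some torsion element of }\Gamma\,\}$$
($T$ is symmetric because inverses of torsion elements are torsion). If $\Gat$ is trivial the lemma is vacuous, so assume otherwise; since $H\cong\Z^n$ is torsion-free, every nontrivial torsion element of $\Gamma$ has nontrivial rotational part, so $T\ne\{I\}$ and $|\Gt|\ge2$.

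First I would record the elementary fact that in a finite group of order $N$ generated by a symmetric set $T$, every element can be written as a product of at most $N-1$ elements of $T$ — this is the bounded-diameter property of the Cayley graph, or equivalently follows by observing that the sets of products of at most $k$ elements of $T$ strictly increase in cardinality until they stabilize at a subgroup, which must then be the whole group. Applying this to $\Gt$, any $A\in\Gt$ can be written as $A=A_1\cdots A_m$ with $m\le|\Gt|-1$ and each $A_i\in T$; choose a torsion element $\gamma_i\in\Gat$ with rotational part $A_i$ (possible since $\Gat$ contains all torsion elements of $\Gamma$, and such a $\gamma_i$ then has rotational part in $\Gt$). Then Lemma \ref{lemma: iterated commutator} applied to $\gamma_1,\dots,\gamma_m$ and the given $h\in H$ gives $(A-I)h=(A_1\cdots A_m-I)h\in\Ht$ and exhibits $T_{(A-I)h}$ as a product of $4m-2$ torsion elements of $\Gamma$. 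Since $m\le|\Gt|-1$ is bounded independently of both $A$ and $h$, setting $M\defeq 4|\Gt|-2$ finishes the proof; the degenerate case $A=I$ (where $(A-I)h=0$ and $T_{(A-I)h}$ is the identity, hence a product of two torsion elements $\gamma_1\gamma_1^{-1}$) is covered by the same bound.

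There is essentially no serious obstacle here: the only content beyond bookkeeping is the observation that word length in the finite group $\Gt$ with respect to the conjugation-closed generating set $T$ is uniformly bounded, after which the result is a direct invocation of Lemma \ref{lemma: iterated commutator}. The one point requiring care is the order of quantifiers — the bound $M$ must depend on neither $A\in\Gt$ nor $h\in H$ — which is exactly why it is essential that the number $m$ of factors be controlled purely by $|\Gt|$.
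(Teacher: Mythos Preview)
Your proof is correct and follows essentially the same approach as the paper: both reduce to Lemma~\ref{lemma: iterated commutator} by using finiteness of $\Gt$ to bound the number of torsion factors needed. The paper simply picks a lift $\gamma\in\Gat$ for each $A\in\Gt$ and invokes finiteness of $\Gt$ to get a uniform bound $m$ on the torsion length of these finitely many lifts, whereas you work directly in $\Gt$ and obtain the explicit bound $m\le|\Gt|-1$ via the Cayley-graph diameter; this makes your constant $M$ explicit but is otherwise the same argument.
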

	\begin{proof}
		For each $A\in \Gt$, pick an arbitrary lift $\gamma\in\Gat$. Since $\Gt$ is finite, there is some $m$ such that each $\gamma$ can be written as a product of at most $m$ torsion elements. By Lemma \ref{lemma: iterated commutator}, the conclusion holds with $M=4m-2$.
	\end{proof}
	
	We will need the following lemma to ensure that we can pick elements of the form $(A-I)h$ considered above to generate a finite index subgroup $H_0$ in $\Ht$.
	\begin{lemma}\label{lemma: full image}
		With the notation above, let $X\subset \R^n$ be the subspace spanned by the image of $A-I$ for all $A\in \Gt$. 
		Consider $\Ht\le H\le \R^n$ as a discrete subgroup of $\R^n$. Then $X$ is also the $\R$-linear span of $\Ht$.
	\end{lemma}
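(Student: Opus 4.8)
The plan is to show that the $\R$-linear span of $\Ht$ contains $X$ (the reverse inclusion is automatic since each $(A-I)h$ lies in $\Ht$ by Lemma \ref{lemma: uniform bound}, so their span sits inside the $\R$-span of $\Ht$). For the nontrivial inclusion, the key observation is that $\Ht$ is, up to finite index, the kernel of $\Gat\to\Gt$, so it should be ``large enough'' to see everything that $X$ sees. More precisely, I would argue as follows. Let $V\subset\R^n$ be the $\R$-linear span of $\Ht$. Since $\Ht$ is normal in $\Gat$ and $\Gat$ acts on $H\le\R^n$ through its rotational part $\Gt$, the subspace $V$ is $\Gt$-invariant: for $\gamma\in\Gat$ with rotational part $A$ and $h\in\Ht$, conjugation gives $\gamma T_h\gamma^{-1}=T_{Ah}$, so $Ah\in\Ht$ and hence $AV=V$. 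Thus $\R^n$ splits as a $\Gt$-representation $\R^n=V\oplus V^\perp$, and it suffices to show $(A-I)$ kills $V^\perp$ for every $A\in\Gt$, i.e. that $\Gt$ acts trivially on $V^\perp$; then $X=\sum_A(A-I)\R^n=\sum_A(A-I)V\subseteq V$ as desired.

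To see that $\Gt$ acts trivially on $V^\perp$, I would use that $\Ht$ has finite index in $\Gat$. Pick any $A\in\Gt$ and a torsion lift $\gamma\in\Gat$ (every element of $\Gt$ is the rotational part of a torsion element of $\Gat$ by the definition of $\Gat$ as generated by torsion — more carefully, $\Gt$ is generated by rotational parts of torsion elements, and it is enough to treat such generators since the set of $A$ acting trivially on $V^\perp$ is a subgroup). The element $\gamma$ has finite order, say $N$, so $\gamma^N=\mathrm{id}$ and in particular the translational part of $\gamma^N$ vanishes. Now consider the cyclic group $\langle\gamma\rangle$ and its intersection with $\Ht$; since $[\Gat:\Ht]<\infty$, some power $\gamma^k$ with $k\ge1$ lies in $\Ht$, hence $T_{v}:=\gamma^k\in\Ht$ is a pure translation by a vector $v$ spanning the fixed direction if $A$ had infinite order — but $A$ has finite order, so in fact $\gamma^k$ being a translation of finite order forces $\gamma^k=\mathrm{id}$, which is not directly what we want. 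Instead the cleaner route: for $h\in\Ht$ and torsion $\gamma$ with rotational part $A$, Lemma \ref{lemma: commutator with torsion} gives $(A-I)h\in\Ht$; iterating over a $\Z$-basis of $\Ht$ shows $(A-I)V\subseteq V$, but more importantly, decompose $h\in H$ arbitrarily as $h=h_V+h_\perp$ with $h_V\in V$, $h_\perp\in V^\perp$. I would then produce, for a suitable large power, enough translations in $\Ht$ with components in $V^\perp$ to contradict finite index unless $\Gt$ fixes $V^\perp$ pointwise.

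The cleanest argument, which I would actually write up, is this: since $\Ht$ has finite index in $\Gat$, for every $\gamma\in\Gat$ some power $\gamma^k$ ($k\ge1$) lies in $\Ht$, hence is a translation $T_w$ with $w\in V$. If $\gamma$ is torsion this is vacuous, so instead apply this to elements of the form $\gamma\cdot T_h$ for $h\in H$ arbitrary: $(\gamma T_h)^k\in\Ht$ for some $k\ge1$ depending on $h$, but $\gamma T_h$ has the same rotational part $A$ as $\gamma$, and a direct computation of $(\gamma T_h)^k$ shows its translational part is $(I+A+\cdots+A^{k-1})(\text{something})+$ a term involving $h$; choosing $h$ in the $(A=I)$-eigenspace and using that $\gamma$ is torsion of order dividing $k$, the translational part becomes $k\cdot(h+(\text{fixed vector}))$, forcing $k h$'s component in $V^\perp$ to lie in $V$, hence to vanish, for \emph{every} $h$ in the $1$-eigenspace of $A$ and every $A\in\Gt$. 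Intersecting over $A$, the full $V^\perp$ (which is contained in each such eigenspace, since $\Gt$ fixes $V^\perp$ — wait, that is circular).

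Let me state the obstacle honestly rather than loop: the main difficulty is precisely disentangling $\Ht$ (the \emph{torsion}-generated translations) from $H$ (all translations) and showing the former is not confined to a proper $\Gt$-invariant subspace on which $\Gt$ acts nontrivially. The decisive input should be Lemma \ref{lemma: commutator with torsion} combined with finite index: for any $A\in\Gt$ with torsion lift $\gamma$, and any $h\in H$, we have $(A-I)h\in\Ht$, so $X\subseteq V$ trivially; the real content is $V\subseteq$ (span of the $(A-I)h$), equivalently that $H/\langle(A-I)h\rangle_{A,h}$ is finite modulo torsion directions, which I expect follows because $\Gat$ acts on $H$ with $(H)^{\Gt}$ (the coinvariants) finite — indeed $\Ht\supseteq\{(A-I)h\}$ generates a subgroup whose quotient in $H$ is annihilated by $|\Gt|$ after projecting to coinvariants, hence $V$ is exactly $X$ as both equal $(\text{the augmentation submodule})\otimes\R$. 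I would finish by invoking that the $\Gt$-coinvariants of $\R^n$ are $V^\perp$-free and identifying $X\otimes\R$ with the augmentation ideal acting, concluding $X=V$.
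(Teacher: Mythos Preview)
Your proposal has a genuine gap: you have the two inclusions confused for most of the write-up, and the hard direction is never actually proved.

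The inclusion $X\subseteq V$ (where $V$ is the $\R$-span of $\Ht$) is the \emph{easy} one: it follows in one line from Lemma~\ref{lemma: commutator with torsion}, since $(A-I)h\in\Ht$ for every $h\in H$ and $H$ spans $\R^n$. Your parenthetical already says this. But then your ``nontrivial inclusion'' argument---showing $\Gt$ acts trivially on $V^\perp$ so that $(A-I)\R^n=(A-I)V\subseteq V$---concludes with exactly the same inclusion $X\subseteq V$. Even if the argument that $\Gt$ fixes $V^\perp$ were completed, it would only re-derive what you already declared automatic.

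The hard direction is $\Ht\subseteq X$, which you correctly name only in the last paragraph (``the real content is $V\subseteq$ span of the $(A-I)h$'') but do not prove; the gestures at coinvariants and the augmentation ideal never produce an actual argument that an arbitrary translation in $\Gat$ lies in $X$.

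The paper's proof of $\Ht\subseteq X$ is short and direct, and you missed the key idea entirely: show that \emph{every} element of $\Gat$ has translational part in $X$. For a torsion element this is immediate, since such an element fixes a point $p$ and hence acts as $x\mapsto Ax+(I-A)p$ with $(I-A)p\in X$. For a product, if $\gamma(x)=Ax+u$ and $\eta(x)=Bx+v$ with $u,v\in X$, then $\eta\gamma(x)=BAx+(B-I)u+u+v$, and $(B-I)u+u+v\in X$. Since $\Gat$ is generated by torsion, induction finishes it; in particular any translation $T_h\in\Gat$ has $h\in X$, so $\Ht\subseteq X$.
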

	\begin{proof}
		By Lemma \ref{lemma: commutator with torsion}, we have $(A-I)h\in \Ht$ for any $h\in H$ and $A\in \Gt$. 
		This shows that the $\R$-linear span of $\Ht$ contains $X$ since $H$ spans the entire space $\R^n$. It remains to show that $\Ht\subset X$.
		
		Any torsion element $\gamma\in \Gamma$ acts on $\R^n$ by $$\gamma(x)=A(x-p)+p=Ax+(I-A)p.$$ 
		The translational part $(I-A)p$ lies in $X$ by definition. We show that this holds for all $\gamma\in \Gat$.
		Since $\Gat$ is generated by torsion, by induction, it suffices to show that
		$\eta\gamma$ has translational part in $X$ if both $\eta,\gamma\in \Gat$ do.
		Indeed, if $\gamma(x)=Ax+u$ and $\eta(x)=Bx+v$ with $A,B\in \Gt$ and $u,v\in X$, then 
		$$\eta\gamma(x)=B(Ax+u)+v=BAx+(B-I)u+u+v$$ has translational part $(B-I)u+u+v\in X$
		since all three terms lie in $X$. Thus 
		any $\gamma\in\Gat$ can be written as $\gamma(x)=Ax+u$ for some $u\in X$.
		In particular, any translation in $\Gat$ takes the form $T_u$ for some $u\in X$.
		This shows $\Ht=H\cap \Gat\subset X$.
	\end{proof}
	
	Now we are in a place to prove Theorem \ref{thm: vanishing}.
	\begin{proof}[Proof of Theorem \ref{thm: vanishing}]
		We use the notation above. Let $d$ be the dimension of the space $X$ as in Lemma \ref{lemma: full image}. 
		Since $H$ spans $\R^n$, by Lemma \ref{lemma: full image} there exists $h_i\in H$ and $A_i\in \Gt$ 
		for $1\le i\le d$ such that $\{(A_i-I)h_i\}_{i=1}^d$ is a basis of $X$. 
		By Lemma \ref{lemma: commutator with torsion}, the subgroup $H_0$ generated by $\{(A_i-I)h_i\}_{i=1}^d$ is a subgroup of $\Ht\le \R^n$, and by construction its $\R$-linear span is $X$.
		As the $\R$-linear span of $\Ht$ is also equal to $X$ by Lemma \ref{lemma: full image}, we observe that $H_0$ is finite index in $\Ht$.
		
		Applying Lemma \ref{lemma: uniform bound} to $A_i\in \Gt$ and $kh_i\in \Ht$ for any $k\in\Z$ and $1\le i\le d$,
		we know there is some uniform $M$ such that $(A_i-I)kh_i$ lies in $\Ht$ and the corresponding translation $T_{(A_i-I)kh_i}$ is a product of at most $M$ torsion elements. 
		By the definition of $H_0$, any element $h\in H_0$ can be written as $\sum_{i=1}^d k_i(A_i-I)h_i$ for some $k_i\in\Z$, and thus the corresponding translation 
		$$T_h=\prod_{i=1}^d T_{k_i(A_i-I)h_i}$$ 
		is a product of at most $dM$ torsion elements.
		
		Since $H_0$ is finite index in $\Ht$, it is also finite index in $\Gat$. By fixing coset representatives of $H_0\le \Gat$ and expressing them as products of torsion elements, the result follows from bounded generation of $H_0$ that we showed above.
	\end{proof}
	
	\begin{example}

		Consider the $(3,3,3)$-triangle group
		$$\Gamma \coloneqq \la a,b,c\;|\; a^2=b^2=c^2=(ab)^3=(bc)^3=(ca)^3=1\ra.$$
		$\Gamma$ acts properly discontinuously and cocompactly on the Euclidean plane with fundamental domain an equilateral triangle $T$, where the generators $a,b$, and $c$ act by reflections about the three lines $\ell_1$, $\ell_2$, and $\ell_3$ containing the three sides of $T$ respectively. This realizes $\Gamma$ as a cocompact discrete subgroup of $E(2)$.

		By Theorem \ref{thm: vanishing}, $\Gamma$ is boundedly generated by torsion elements. Here, we explicitly show that any $\gamma\in\Gamma$ is a product of at most four torsion elements.
		
		The orbit of $T$ under the $\Gamma$ actions gives a tiling of $\R^2$ as in Figure \ref{fig: triangle}.
		As $T$ is a fundamental domain, elements of $\Gamma$ are in one-to-one correspondence to the image of $T$. All lines in the tiling are the image of $\ell_1$, $\ell_2$, or $\ell_3$ under $\Gamma$. Thus any reflection fixing one of these lines is a conjugate of $a$, $b$, or $c$. and hence an element of $\Gamma$. So it suffices to show that one can take $T$ to any other triangle in the tiling using at most four such reflections.
		
		We can take $T$ to any light shaded triangle in Figure \ref{fig: triangle} by at most two reflections about horizontal lines in the tiling. Now we can arrive at any dark shaded triangle by further applying a reflection about a line in the tiling parallel to $\ell_2$. Note that any remaining triangle shares a side with a dark shaded triangle, so we can arrive at any remaining triangle by another reflection. Hence we can reach any triangle in the tiling by at most four reflections.
		
		\begin{figure}
		\centering
		\labellist
		\small \hair 2pt
		\pinlabel $T$ at 227 165
		\pinlabel $\ell_1$ at 287 148
		\pinlabel $\ell_2$ at 182 122
		\pinlabel $\ell_3$ at 212 193
		\endlabellist
		\includegraphics[scale=0.6]{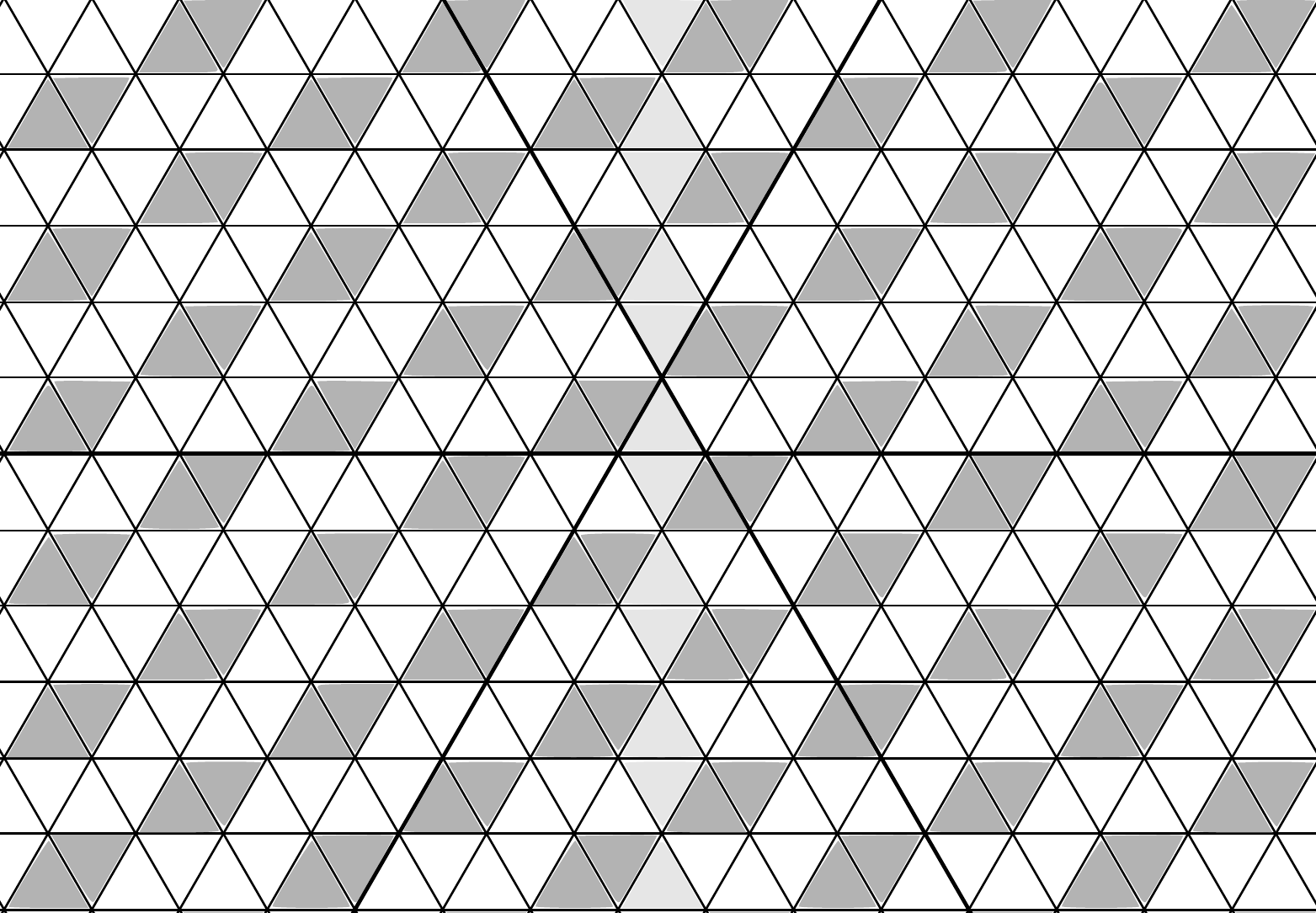}
		\caption{The tiling given by the action of the triangle group.}
		\label{fig: triangle}
	\end{figure}

	\end{example}
	
	\begin{remark}
		For any integers $p,q,r\ge2$, the group $\Gamma_{p,q,r}=\langle a,b,c \;|\; a^2=b^2=c^2=(ab)^p=(bc)^q=(ac)^r=1\ra$ is called the $(p,q,r)$-triangle group. If $\frac{1}{p}+\frac{1}{q}+\frac{1}{r}>1$, then $\Gamma_{p,q,r}$ is finite, so $\stl _{\Gamma_{p,q,r}}\equiv 0$. If $\frac{1}{p}+\frac{1}{q}+\frac{1}{r}=1$, then $\Gamma_{p,q,r}$ is crystallographic, so by Theorem \ref{thm: vanishing}, $\stl_{\Gamma_{p,q,r}}\equiv 0$. 
		
		Finally, if $\frac{1}{p}+\frac{1}{q}+\frac{1}{r}<1$, then $\Gamma_{p,q,r}$ is non-elementary hyperbolic. By remark \ref{remark: hyperbolic}, $\stl_{\Gamma_{p,q,r}}\not\equiv 0$. 
	\end{remark}

	\section{Free products}\label{sec: free prod}
	In the rest of this paper, we focus on stable torsion length in free products.
	
	Let $G=A*B$ be a free product of groups $A$ and $B$. Let $X_A$ be a $K(A,1)$ space, let $X_B$ be a $K(B,1)$ space, and let $X$ be the space obtained by connecting $X_A,X_B$ by a line segment with midpoint $*$. In the sequel, we will really think of $X_A$ as including the half segment up to $*$, and similarly for $X_B$.
	
	We develop a normal form for torsion-admissible surfaces (defined in Section \ref{subsec: topological}) in $X$ in Section \ref{subsec: normal form}.
	The normal form can be further simplified to \emph{simple surfaces} which we introduce in Section \ref{subsec: simple surf}. Describing the stable torsion length in terms of simple surfaces leads to an isometric embedding theorem (Section \ref{subsec: isom embed}) and a linear programming problem which produces an effective lower bound of $\stl_G(g)$ for any element $g\in G$ (Section \ref{subsec: linprog lower bound}).
	Specializing to the case where $A$ and $B$ are finite groups, we will further develop an algorithm that computes $\stl_G(g)$ for any $g$ in Section \ref{sec: compute stl}.
	
	\subsection{A normal form}\label{subsec: normal form}
	Let $g\in A*B$ be an element which does not conjugate into $A$ or $B$. Stable word length is constant on conjugacy classes, so it suffices to consider $g$ as a cyclically reduced word $g=a_1b_1\cdots a_L b_L$ where $a_i\in A\setminus\{id\}$ and $b_i\in B\setminus\{id\}$.
	
	Let $\gamma$ be a loop in $X$ representing $g$ such that $*$ decomposes it as a concatenation of 
	$2L$ arcs $\alpha_1,\beta_1,\alpha_2,\beta_2,\ldots,\alpha_L,\beta_L$ cyclically, 
	where each $\alpha_i$ (resp. $\beta_i$) is supported on
	the $A$-side (resp. $B$-side) and represents $a_i\in A$ (resp. $b_i\in B$) as a loop based at $*$.
	
	Let $f:S\to X$ be a torsion-admissible surface (see Section \ref{subsec: topological}). 
	Recall that by definition there is a specified boundary component $\partial_0 S$ of $S$ whose image represents a power of $g$, and the remaining boundary components are referred to as holes. Since the image of each hole represents a torsion element in $G=A*B$, which must be conjugate to a torsion element in either $A$ or $B$, up to homotopy we may assume the image of each hole is disjoint from $*$.
	Perturb $f$ further by a homotopy to make it transverse\footnote{Here transversality makes sense since $*$ is the midpoint of the segment joining $X_A$ and $X_B$, locally as a submanifold of codimension one.} to $*$ and keep the image of holes disjoint from $*$. Then $F\defeq f^{-1}(*)$ is an embedded proper submanifold of $S$ of codimension one. Thus $F$ is a finite collection of disjoint embedded loops and proper arcs. 
	Moreover, the endpoints of any proper arc in $F$ lie on $\partial_0 S$ since all other boundary components are disjoint from $F$.
	
	\begin{lemma}
		Up to homotopy and compression of $S$ into another	torsion-admissible surface $S'$ of the same degree such that $-\chi^-(S')\le -\chi^-(S)$ and $H(S')\le H(S)$, we can assume that $F=f^{-1}(*)$ only consists of proper arcs.
	\end{lemma}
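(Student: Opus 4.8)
The plan is to remove the circle components of $F$ one at a time, inducting on the number $c(F)$ of such circles, via the standard innermost-disk and compression (surgery) moves. Two facts drive everything: $f(F)=\{*\}$, so each circle of $F$ is sent to the \emph{constant} loop at $*$; and $g$, not being conjugate into $A$ or $B$, has infinite order in $G$, so $g^n\neq1$ for every $n\ge1$ (and the degree of an admissible surface may be taken to be positive). If $c(F)=0$ there is nothing to prove, so assume $c(F)>0$ and run one of the two moves below, each of which strictly lowers $c(F)$, keeps the degree fixed, and does not increase $-\chi^-$ or $H$.

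\emph{Case 1: some circle of $F$ bounds a disk in $S$.} Choose an innermost such circle $c$, bounding a disk $D\subset S$ with the interior of $D$ disjoint from $F$ (possible because an arc of $F$, having its endpoints on $\partial_0 S$ and being disjoint from $c$, cannot enter $D$, while a circle of $F$ inside $D$ would bound a smaller disk). Take a closed regular neighborhood $N\supset D$ in the interior of $S$ with $N\cap F=c$. Then $f(\partial N)$ lies in a single component $U$ of $X\setminus\{*\}$, and it bounds the disk $f|_N$ in $X$, hence is nullhomotopic in $X$; since $\pi_1(U)\to\pi_1(X)$ is injective and $U$ is homotopy equivalent to $X_A$ or $X_B$, the loop $f|_{\partial N}$ bounds a disk inside $U$. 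Redefining $f$ on $N$ by such a disk yields a map that still represents $S$, has the same degree, is transverse to $*$, and has $f^{-1}(*)=F\setminus c$. So $c(F)$ drops by one and $S$ is unchanged; recurse.

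\emph{Case 2: no circle of $F$ bounds a disk in $S$.} Pick any circle $c$ of $F$. Since $S$ is planar, $c$ is separating: $S=S_1\cup_c S_2$ with $\partial_0 S\subset S_1$ (note $c\neq\partial_0 S$ because $c$ lies in the interior of $S$), and neither $S_i$ is a disk. Compress along $c$: homotope $f$ in a thin collar of $c$ inside $S_1$ so as to push $f|_c$ off $*$ to a small loop $\delta$, nullhomotopic in the component $U$ of $X\setminus\{*\}$ that this collar maps into; then cut $S$ along $c$, cap the resulting boundary circle of $S_1$ by a disk mapped into $U$ so as to bound $\delta$, and discard $S_2$. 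The outcome $S'$ is a connected planar surface, torsion-admissible for $g$ of the same degree (we changed $f$ neither on $\partial_0 S$ nor on the surviving holes), transverse to $*$, and $f^{-1}(*)$ has lost $c$ together with every circle of $F$ that lay in $S_2$. Furthermore $S'$ is not a disk: otherwise $S_1$ would be an annulus cobounding $\partial_0 S$ and $c$, forcing $g^n=1$. Hence $-\chi^-(S')=-\chi(S')=-\chi(S_1)-1$ and $-\chi^-(S)=-\chi(S)=-\chi(S_1)-\chi(S_2)$, and since $S_2$ is a connected surface with nonempty boundary that is not a disk we have $\chi(S_2)\le0$; therefore $-\chi^-(S')\le-\chi^-(S)$, and likewise $H(S')\le H(S)$. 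Recurse.

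Iterating these two moves drives $c(F)$ to zero, producing a torsion-admissible surface of the same degree with $-\chi^-$ and $H$ no larger, for which $f^{-1}(*)$ consists only of proper arcs. The only subtlety is bookkeeping: one must perform each surgery on the correct side of $*$ so that the offending circle truly disappears from $f^{-1}(*)$ (rather than being reintroduced when transversality is re-examined), and one must rule out the degenerate compression in which the piece carrying $\partial_0 S$ collapses to a disk — both handled above, the latter using that $g^n\neq1$.
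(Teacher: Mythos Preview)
Your proof is correct and follows essentially the same approach as the paper: eliminate inessential circles of $F$ by the innermost-disk argument and eliminate essential circles by compressing and discarding the component not containing $\partial_0 S$. You supply more detail than the paper on several points the paper leaves implicit---the $\pi_1$-injectivity of $X_A,X_B\hookrightarrow X$ needed to redefine $f$ on the innermost disk, the explicit verification that $-\chi^-$ and $H$ do not increase under compression, and the observation (using that $g$ has infinite order) that the compressed surface cannot degenerate to a disk.
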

	\begin{proof}
		If $F$ contains any embedded loop $\tau$ that is essential (possibly boundary parallel) in $S$, then $\tau$ must cut $S$ into two components since $S$ is planar. Since $\tau\subset F$ is disjoint from $\partial_0 S$, only one of the components contains $\partial_0 S$ after cutting.
		Let $S'$ be this component with a disk coning off $\tau$, and extend $f$ by mapping the entire disk to $*$. 
		Further compose $f$ with a homotopy which pushes the image of the disk away from $*$. 
		In this way we obtain a simple torsion-admissible surface and eliminate an embedded essential loop $\tau\subset F$ without changing the map on $\partial_0 S$ (and thus the degree). This deletes an arbitrary embedded essential loop in $F$.
		
		Now suppose $F$ contains any inessential embedded loop $\rho$, i.e. $\rho$ bounds a disk in $S$. Take the inner-most disk $D$ among those bounding such loops. Up to a homotopy, we can modify $f$ on a small neighborhood of $D$ to eliminate an inessential embedded loop $\rho=\partial D\subset F$. 
		
		By applying the above operations finitely many times, we obtain a torsion-admissible surface $S'$ with the desired properties.
	\end{proof}
	
	From now on, assume $F=f^{-1}(*)$ only consists of proper arcs. Denote $S_A\defeq f^{-1}(X_A)$ and $S_B\defeq f^{-1}(X_B)$.
	Then $F$ cuts $S$ into $S_A$ and $S_B$, which are collections of subsurfaces with corners, and map into $X_A$ and $X_B$ respectively. See the example below.
	
	\begin{example}
		Suppose $g=a_1b_1a_2b_2$ where $a_1$ is a product of two torsion elements, $a_2$ is $2$-torsion, and $b_2=b_1^{-1}$. Then a torsion-admissible surface $S$ can be constructed as shown in Figure \ref{fig: S_A},
		where the red (outer) boundary component represents $g^4$ and each blue (inner) boundary represents a torsion element (in $A$).
		Then $F$ is the disjoint union of the (green) dashed arcs, and the subsurface $S_A$ is the union of those pieces in darker grey. 
	\end{example}

	\begin{figure}
		\centering
		\labellist
		\small \hair 2pt
		\pinlabel $\alpha_1$ at -7 101
		\pinlabel $\beta_2$ at 65 125
		\pinlabel $\beta_1$ at 65 72
		\pinlabel $\alpha_2$ at 110 125
		\pinlabel $\alpha_2$ at 110 72
		\pinlabel $\beta_1$ at 155 125
		\pinlabel $\beta_2$ at 155 72
		\pinlabel $\alpha_1$ at 188 129
		\pinlabel $\alpha_1$ at 188 70
		\pinlabel $\alpha_1$ at 240 101
		\pinlabel $\beta_1$ at 255 133
		\pinlabel $\beta_2$ at 206 158
		\pinlabel $\alpha_2$ at 265 203
		\pinlabel $\beta_2$ at 255 69
		\pinlabel $\beta_1$ at 208 44
		\pinlabel $\alpha_2$ at 265 -4
		\pinlabel $S$ at 120 0
		
		\pinlabel $\alpha_1$ at 308 101
		\pinlabel $\alpha_2$ at 395 125
		\pinlabel $\alpha_2$ at 395 72
		\pinlabel $\alpha_1$ at 442 129
		\pinlabel $\alpha_1$ at 442 70
		\pinlabel $\alpha_1$ at 495 101
		\pinlabel $\alpha_2$ at 510 186
		\pinlabel $\alpha_2$ at 510 12
		\pinlabel $S_A$ at 410 0
		\endlabellist
		
		\includegraphics[scale=0.7]{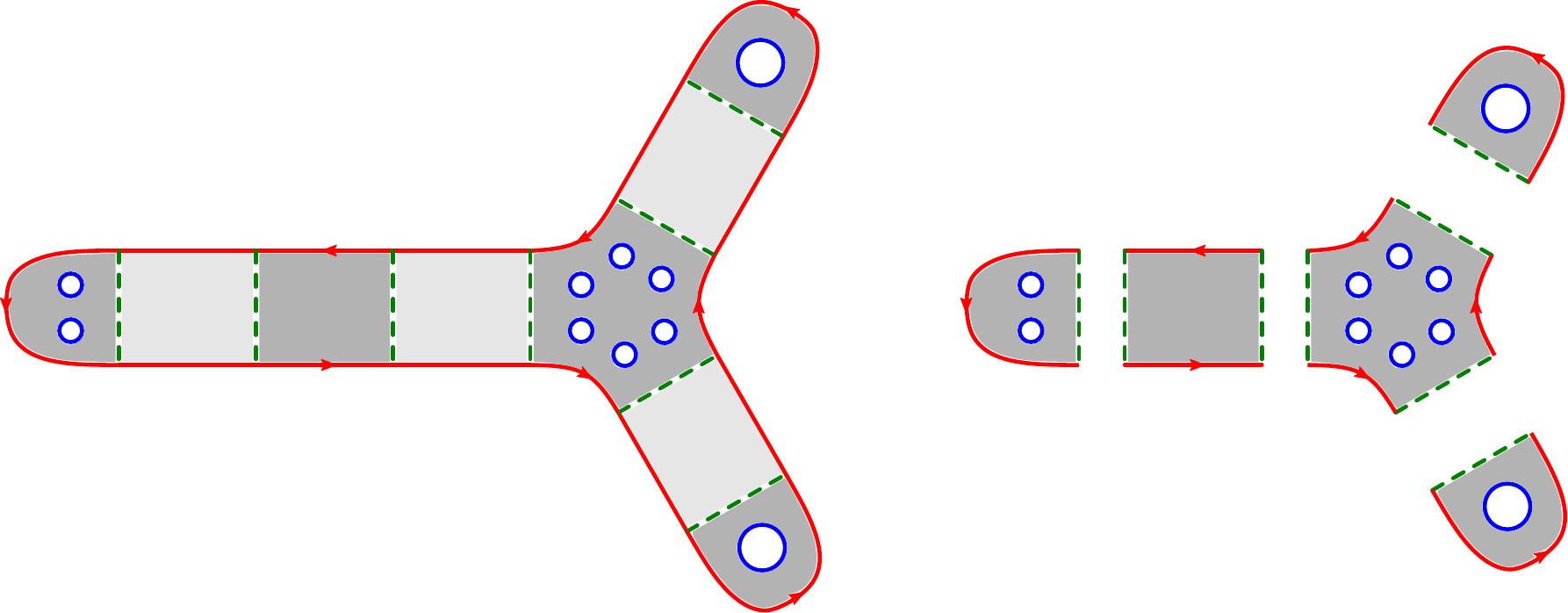}
		\caption{The decomposition of a torsion-admissible surface $S$ by cutting along $F$ (the green dashed arcs). The subsurface $S_A$ is pictured on the right.}
		\label{fig: S_A}
	\end{figure}


	In general, there are two types of boundary components of $S_A$:
	\begin{enumerate}
		\item A \emph{polygonal boundary} is one that contains corners, arcs in $F$, and arcs in $\partial_0S$. Such a boundary is divided into an even number of sides by corners of $S_A$, where the sides alternate between arcs on $F$ and arcs on $\partial_0 S$ which are mapped to some $\alpha_i$; see the ``outer'' boundary component in Figure \ref{fig: S_A} of each component in $S_A$.
		\item A \emph{hole} is a boundary component that is disjoint from $F$. Then by construction, it must come from a hole of $S$ and represent a torsion element in $A$; see the blue (inner) boundary components of $S_A$ in Figure \ref{fig: S_A}.
	\end{enumerate}
	
	\begin{lemma}
		With the above setup, each component of $S_A$ has exactly one polygonal boundary.
	\end{lemma}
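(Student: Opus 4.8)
The plan is to reduce the statement to the elementary fact that cutting a disk along disjoint properly embedded arcs produces disks. First I would cap off each hole of $S$ with a disk. Since $S$ is compact, connected, oriented, planar, and has the boundary component $\partial_0 S$, it is a disk with finitely many holes, so the capped surface $\hat S$ is a disk with $\partial \hat S = \partial_0 S$. Because every arc of $F$ has both endpoints on $\partial_0 S$ and is disjoint from all holes, the submanifold $F$ sits inside $\hat S$ as a finite disjoint union of properly embedded arcs, and the capping commutes with cutting along $F$: cutting $\hat S$ along $F$ is the same as cutting $S$ along $F$ and then capping the hole-type boundary components of the resulting pieces.

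Next I would invoke the standard observation that cutting a disk along a disjoint collection of properly embedded arcs produces a disjoint union of disks (induct on the number of arcs, each cut splitting one disk into two). Applied to $\hat S$ and $F$, this shows every component of $\hat S$ cut along $F$ is a disk. Now fix a component $C$ of $S_A$; it is one of the components of $S$ cut along $F$, and by the previous paragraph the surface $\hat C$ obtained from $C$ by capping its hole-type boundary components is one of the disk pieces of $\hat S$ cut along $F$, hence $\hat C$ is a disk. A disk has exactly one boundary component, and the boundary components of $\hat C$ are precisely the non-hole boundary components of $C$; moreover a piece of a disk cut along arcs is never closed, so $C$ cannot have zero such components. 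Therefore $C$ has exactly one non-hole boundary component.

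It then remains to check that this boundary component is genuinely polygonal, i.e.\ that it meets $F$. If it did not, $C$ would be disjoint from $F$ entirely, so $C$ would be an open and closed subsurface of $S$; connectedness of $S$ would give $C = S$ and $f(S)\subseteq X_A$, forcing some positive power of $g$ to lie in $A$ and hence $g$ to conjugate into $A$, contradicting the standing assumption that $g$ conjugates into neither $A$ nor $B$. So the unique non-hole boundary component of $C$ contains at least one arc of $F$; traversing it, each $F$-arc is followed by a corner on $\partial_0 S$ and then an arc of $\partial_0 S$ — we can never run into a hole, since holes are disjoint from both $F$ and $\partial_0 S$ — which gives exactly the alternating polygonal description in the definition.

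The only delicate point is the bookkeeping in the first paragraph: verifying that capping holes and cutting along $F$ commute, and that the boundary components of $\hat C$ correspond bijectively to the non-hole boundary components of $C$. This is routine once one uses that $F$ is disjoint from every hole of $S$. After that, the argument is just the disk-cutting lemma together with excluding the degenerate case $C = S$; an alternative bookkeeping via Euler characteristic of $\hat C$ would also work but seems no shorter.
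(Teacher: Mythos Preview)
Your argument is correct and complete. It is, however, a genuinely different route from the paper's. The paper argues the two halves separately: a component with no polygonal boundary would be a full connected component of $S$ with only holes, contradicting connectedness; and a component with two polygonal boundaries $C_1,C_2$ would yield a non-separating embedded loop in $S$ (parallel to $C_1$, crossed by an arc that detours along $\partial_0 S$ to reach $C_2$), contradicting planarity. You instead use planarity globally up front, recognising $S$ as a disk with holes, capping to a disk $\hat S$, and invoking the elementary fact that cutting a disk along disjoint proper arcs yields disks. This packages both halves into one step and gives a little more for free (each capped piece $\hat C$ is a disk), at the mild cost of the bookkeeping you flag about capping commuting with cutting. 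The paper's non-separating-loop argument is shorter and uses planarity only at the single point where it is needed; your approach is more structural and perhaps easier to internalise. One very small wording point: when $C=S$ you conclude that a positive power of $g$ ``lies in $A$''; strictly you get that $g^n$ is \emph{conjugate} into $A$, and then the free-product fact that powers detect factor-conjugacy (for $n\ge 1$) finishes it, exactly as you then say.
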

	\begin{proof}
		A component of $S_A$ without any polygonal boundary must be itself a component of $S$ with only holes on the boundary, which is absurd since $S$ is connected. If a component of $S_A$ has at least two polygonal boundaries $C_1$ and $C_2$, then an embedded loop $\ell$ in $S_A$ homotopic to $C_1$ is non-separating in $S$: one can go from one side of $\ell$ to $C_1$, follow $\partial_0 S$ to arrive at $C_2$, and then travel to the other side of $\ell$ in this component; see the arc $\tau$ in Figure \ref{fig: nonsep}.
		This contradicts the fact that $S$ is planar.
		\begin{figure}
			\centering
			\labellist
			\small \hair 2pt
			\pinlabel $\tau$ at 150 105
			\pinlabel $\ell$ at 287 100
			\pinlabel $\partial_0S$ at 208 46
			\pinlabel $C_1$ at 300 20
			\pinlabel $C_2$ at 130 5    
			\pinlabel $S_A$ at 180 0    	
			\endlabellist
			\includegraphics[scale=0.7]{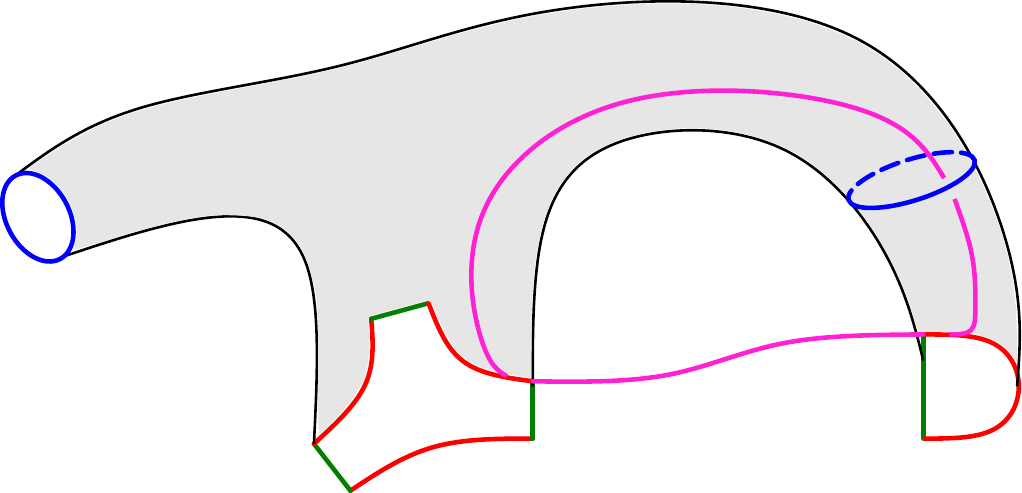}
			\caption{An arc $\tau$ traveling from one side of $\ell$ to the other side using part of the boundary $\partial_0 S$.}
			\label{fig: nonsep}
		\end{figure}
	\end{proof}
	
	Thus every component of $S_A$ is a polygon with $h\geq 0$ holes and a unique polygonal boundary. The same analysis works for components of $S_B$.
	
	\begin{definition}
		A \emph{normal form} of a torsion-admissible surface $S$ is the decomposition of $S$ into $S_A$ and $S_B$ as above, where each component of $S_A$ (resp. $S_B$) is a polygon with $h\ge0$ holes and a unique polygonal boundary.
	\end{definition}
	
	We summarize the discussion above in the following lemma and corollary. 
	\begin{lemma}\label{lemma: put into normal form}
		Any torsion-admissible surface $S$ can be modified into another torsion-admissible surface $S'$ in normal form of the same degree, such that $-\chi(S)\ge-\chi(S')$ and $H(S)\ge H(S')$.
	\end{lemma}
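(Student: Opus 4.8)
The plan is to assemble the three results of this subsection into one statement, with essentially no new geometric input. Given a torsion-admissible surface $S=(S,f)$ for $g$, I would first apply the first lemma of this subsection to compress $S$ into a torsion-admissible surface $S'=(S',f')$ of the same degree whose basepoint preimage $F'=(f')^{-1}(*)$ consists only of proper arcs, with $-\chi^-(S')\le-\chi^-(S)$ and $H(S')\le H(S)$. Since $g$ does not conjugate into $A$ or $B$, it is not a torsion element of $A*B$, so $\partial_0 S'$ carries a nontrivial loop representing $g^{n(S')}$; hence $S'$ is neither a sphere nor a disk, and the same holds for $S$. Therefore $-\chi^-(S')=-\chi(S')$ and $-\chi^-(S)=-\chi(S)$, and the two inequalities above become exactly $-\chi(S')\le-\chi(S)$ and $H(S')\le H(S)$.

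It then remains to observe that $S'$ is automatically in normal form. Cutting $S'$ along $F'$ produces $S'_A=(f')^{-1}(X_A)$ and $S'_B=(f')^{-1}(X_B)$, which are collections of subsurfaces with corners. By the classification of boundary components carried out above, every boundary component of a component of $S'_A$ (resp.\ $S'_B$) is either a polygonal boundary or a hole. By the lemma asserting that each component of $S_A$ has exactly one polygonal boundary — whose proof uses only that the ambient surface is planar, and so applies verbatim to $S'$ — each component of $S'_A$ and of $S'_B$ has exactly one polygonal boundary. Finally, a component of $S'_A$ is a subsurface of the planar surface $S'$, hence itself planar, and a connected compact planar surface with one distinguished boundary component and $h\ge0$ further boundary components is precisely a disk with $h$ holes, i.e.\ a polygon with $h$ holes. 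The same applies to the components of $S'_B$, so the decomposition of $S'$ into $S'_A$ and $S'_B$ is a normal form, and $S'$ is the desired surface.

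The substance of this lemma is pure bookkeeping: the genuine arguments — the compression eliminating closed curves from $F$, and the non-separation argument forcing a unique polygonal boundary on each piece — have already been carried out. The only points deserving a moment's care are making sure the compressed surface is non-degenerate so that $\chi^-$ and $\chi$ agree (which is precisely why we assumed $g$ is not conjugate into a factor), and the elementary classification of compact planar surfaces invoked in the last step; neither presents a real obstacle.
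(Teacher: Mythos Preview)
Your proposal is correct and matches the paper's approach exactly: the paper presents this lemma explicitly as a summary of the preceding discussion (``We summarize the discussion above in the following lemma and corollary''), and you have correctly identified and assembled the two unnamed lemmas --- the compression step eliminating loops from $F$, and the uniqueness of the polygonal boundary --- together with the observation that $\chi^-=\chi$ because $g$ is assumed not conjugate into a factor and hence not torsion in $A*B$.
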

	
	\begin{corollary}
		In equation (\ref{eqn: stl via surfaces}) of Lemma \ref{lemma: stl via surfaces} 
		we can take each infimum over torsion-admissible surfaces in normal form instead.
	\end{corollary}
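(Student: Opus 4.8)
The plan is to deduce this directly from Lemma \ref{lemma: stl via surfaces} together with Lemma \ref{lemma: put into normal form}. By Lemma \ref{lemma: stl via surfaces}, $\|g\|_S$ equals the infimum of $H(\Sigma)/n(\Sigma)$ (equivalently of $-\chi^-(\Sigma)/n(\Sigma)$) over all torsion-admissible surfaces $\Sigma$ for $g$. Since torsion-admissible surfaces in normal form form a subclass of all torsion-admissible surfaces, the infimum over the subclass is automatically $\ge\|g\|_S$. For the reverse inequality, I would take an arbitrary torsion-admissible surface $\Sigma$ and apply Lemma \ref{lemma: put into normal form} to obtain a torsion-admissible surface $\Sigma'$ in normal form of the same degree $n(\Sigma')=n(\Sigma)$ with $H(\Sigma')\le H(\Sigma)$ and $-\chi(\Sigma')\le-\chi(\Sigma)$, so that $H(\Sigma')/n(\Sigma')\le H(\Sigma)/n(\Sigma)$. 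Taking the infimum over all $\Sigma$ then shows the infimum over normal-form surfaces is $\le\|g\|_S$, giving equality, and the same computation handles the $-\chi^-$ form.

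The only points requiring a little care are the degenerate cases and the passage between $\chi$ and $\chi^-$. If $g$ is a torsion element, then (as in the proof of Lemma \ref{lemma: stl via surfaces}) a power $g^n$ bounds a disk and all three quantities in (\ref{eqn: stl via surfaces}) vanish; such a disk is trivially in normal form, so the statement holds. If $g$ is not torsion — the relevant case here, since a normal form is only set up for $g$ not conjugate into $A$ or $B$, and every torsion element of $A*B$ is conjugate into a factor — then no torsion-admissible surface for $g$ can be a disk or a sphere, so $-\chi^-$ agrees with $-\chi$ for every torsion-admissible surface and for the surfaces produced by Lemma \ref{lemma: put into normal form}; hence the argument above applies verbatim to the $-\chi^-$ form as well.

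Since the substantive work — producing the normal form and controlling the Euler characteristic and the number of holes under the compression and coning-off moves — has already been carried out in the lemmas and discussion preceding the statement, I do not expect any genuine obstacle: the proof is essentially a one-line consequence of Lemmas \ref{lemma: stl via surfaces} and \ref{lemma: put into normal form}. The one bookkeeping remark worth stating explicitly is simply that an infimum over a family is unchanged when every member of the family dominates — in both $H$ and $-\chi^-$, with equal degree — some member of a given sub-family.
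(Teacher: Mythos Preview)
Your proposal is correct and matches the paper's approach: the corollary is stated there without proof, as an immediate consequence of Lemma~\ref{lemma: put into normal form} combined with Lemma~\ref{lemma: stl via surfaces}, and you have simply written out the evident argument (infimum over a subclass is no smaller, while Lemma~\ref{lemma: put into normal form} guarantees it is no larger). Your handling of the $\chi$ versus $\chi^-$ distinction and the torsion case is more explicit than the paper, but not different in substance.
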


	\subsection{Simple Surfaces}\label{subsec: simple surf}
	
	The collection of torsion-admissible surfaces for an element $g\in G$ can be further simplified to the collection of simple surfaces, which we now introduce. We use simple surfaces to obtain effective estimates of $\stl_G(g)$ in Section \ref{subsec: linprog lower bound}.
	We push this further in Section \ref{sec: compute stl} when $G$ is a free product of finite groups to compute $\stl_G(g)$.
	
	Roughly speaking, a simple surface $S$ is made of particular pieces either in the $A$-side, or the $B$-side. This is similar to a torsion-admissible surface in normal form, however, one main difference is that each piece now only contains at most one hole.
	Before introducing simple surfaces, we first define the collection of pieces that are allowed in simple surfaces.
	
	Recall that the loop $\gamma$ representing $g$ decomposes into arcs $\alpha_1,\beta_1,\ldots,\alpha_L,\beta_L$,
	representing elements $a_1,b_1,\cdots,a_L,b_L$ in $A$ and $B$.
	
	\begin{definition}
		A \emph{polygonal boundary} is an oriented circle together with a map $f$ into $X_A$, so that the map $f$ naturally divides the loop into an even number of sides alternating between \emph{arcs} and \emph{turns} as follows. The map $f$ collapses each turn to the wedge point $*$ and maps each arc to some $\alpha_i$, called the the \emph{label} of the arc.
		See each ``outer'' boundary component of $S_A$ in Figure \ref{fig: S_A} or \ref{fig: simple_S_A} for examples, where arcs are solid in red and turns are dashed in green.
		
		As a loop in $X_A$, the polygonal boundary represents a conjugacy class in $A$, referred to as the \emph{winding class} of the polygonal boundary. 
		
		If a polygonal boundary has trivial winding class, the map extends to a disk bounding the polygonal boundary. We call such a disk with polygonal boundary a \emph{disk-piece}.
		
		If a polygonal boundary has nontrivial winding class, we require it to lie in $A_{tor}$. We represent such a conjugacy class by a loop in $X_A$ which is away from $*$ and homotopic to the polygonal boundary. Then, there is an annulus bounding the polygonal boundary on one side and this homotopic loop (with opposite induced orientation) on the other side.
        We refer to this annulus as an \emph{annulus-piece}.
		We refer to the non-polygonal boundary as the \emph{hole} in the annulus-piece.
		
		A \emph{piece on the $A$-side} is defined to be either a disk-piece or an annulus-piece. We denote the collection of all possible pieces on the $A$-side as $\tcP_A$.
		Similarly we define \emph{pieces on the $B$-side} and denote the corresponding collection as $\tcP_B$. Let $\tcP\defeq\tcP_A\cup\tcP_B$ be the collection of all pieces.
	\end{definition}
	
	Each turn on a polygonal boundary in $A$ with the given orientation travels from an arc labeled by some $\alpha_i$ to another labeled by some $\alpha_j$. We say such a turn is of \emph{type} $(\alpha_i,\alpha_j)$.
	Similarly each turn on the $B$-side is of type $(\beta_k,\beta_\ell)$ for some $k$ and $\ell$.
	We say a turn of type $(\alpha_i,\alpha_j)$ is \emph{compatible} with a turn of type $(\beta_{j-1},\beta_i)$, where indices are taken mod $L$.
	
	Pieces on the $A$-side can glue to pieces on the $B$-side along compatible turns and the maps on these pieces can be extended continuously in the obvious way; see the left of Figure \ref{fig: simple_S_A} where pieces are glued along compatible turns, the green dashed lines.
	
	\begin{definition}[Simple surfaces]\label{def: simple surf}
		A \emph{simple surface} $S$ is a finite collection of pieces in $\tcP$ together with a pairing on the set of turns of the given pieces so that paired turns are compatible. Geometrically, we think of $S$ as a surface obtained by gluing the given finitely many pieces along turns by the given pairing; see the left of Figure \ref{fig: simple_S_A} for an example where $L=2$.
		It follows from the definition of compatible turns that each boundary component of $S$ containing at least one arc must wind around $\gamma$ by a positive number of times.
		Define the degree $n(S)>0$ of $S$ to be the sum of these positive numbers.
    \end{definition}
    \begin{definition}[Gluing graph]\label{def: gluing graph}
		Associated to each simple surface $S$ is a \emph{gluing graph} $\Gamma_S$, where each vertex represents a piece and each edge connecting two vertices represents two paired compatible turns that the two pieces glue along; see the right of
		Figure \ref{fig: simple_S_A}.
	\end{definition}
	
	\begin{figure}
		\centering
		\labellist
		\small \hair 2pt
		\pinlabel $\alpha_1$ at -7 101
		\pinlabel $\beta_2$ at 65 125
		\pinlabel $\beta_1$ at 65 72
		\pinlabel $\alpha_2$ at 110 125
		\pinlabel $\alpha_2$ at 110 72
		\pinlabel $\beta_1$ at 155 125
		\pinlabel $\beta_2$ at 155 72
		\pinlabel $\alpha_1$ at 188 129
		\pinlabel $\alpha_1$ at 188 70
		\pinlabel $\alpha_1$ at 240 101
		\pinlabel $\beta_1$ at 255 133
		\pinlabel $\beta_2$ at 206 158
		\pinlabel $\alpha_2$ at 265 203
		\pinlabel $\beta_2$ at 255 69
		\pinlabel $\beta_1$ at 208 44
		\pinlabel $\alpha_2$ at 265 -4
		\pinlabel $S$ at 120 0
		
		\pinlabel $\Gamma_S$ at 410 80
		\endlabellist
		\includegraphics[scale=0.7]{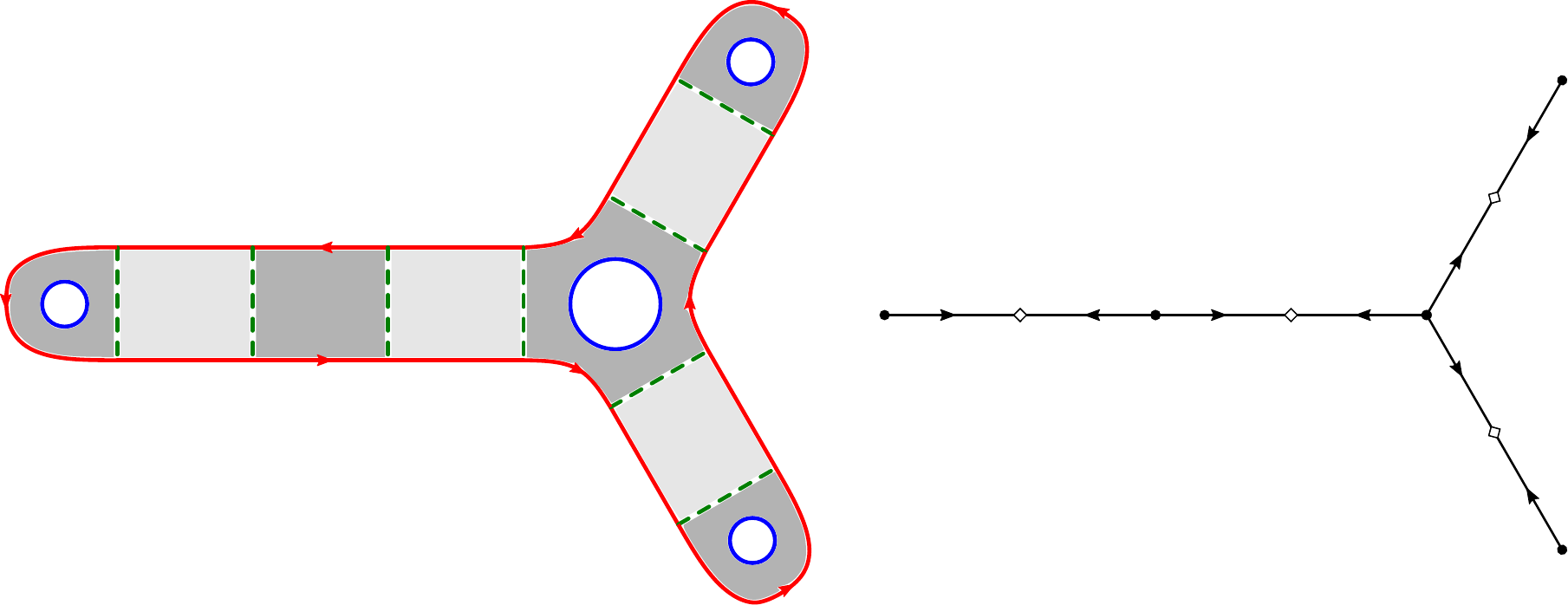}
		\caption{A simple surface $S$ and its gluing graph $\Gamma_S$. Here $S$ is obtained by simplifying the torsion-admissible surface in Figure \ref{fig: S_A}.}
		\label{fig: simple_S_A}
	\end{figure}
	
	\begin{lemma}\label{lemma: chi formula via graph}
		For a simple surface $S$, let $e$ be the number of edges in $\Gamma_S$ and let $d$ be the number of disk-pieces in $S$. Then
		$$-\chi(S)=e-d.$$
	\end{lemma}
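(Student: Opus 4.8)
The plan is to compute $\chi(S)$ directly from its description as a gluing of pieces, by keeping track of the Euler characteristic before and after the gluing. First I would record that a disk-piece is homeomorphic to a disk, hence has Euler characteristic $1$, while an annulus-piece is homeomorphic to an annulus, hence has Euler characteristic $0$. So if $\widetilde{S}$ denotes the disjoint union of all the pieces of $S$, with $d$ disk-pieces and $a$ annulus-pieces, then $\chi(\widetilde{S})=d$, independently of $a$.

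Next, by Definition \ref{def: simple surf} the surface $S$ is the quotient of $\widetilde{S}$ obtained by identifying, for each of the $e$ edges of the gluing graph $\Gamma_S$, the two paired compatible turns. Choosing compatible cellular structures on $\widetilde{S}$ and $S$ in which every turn is a subcomplex and the quotient map $q\colon \widetilde{S}\to S$ is cellular, the locus $A$ being identified is a disjoint union of $2e$ closed arcs, of Euler characteristic $2e$, while its image $q(A)$ is a disjoint union of $e$ closed arcs, of Euler characteristic $e$. For such a cellular quotient (collapsing a subcomplex $A$ onto $q(A)$ and restricting to a homeomorphism off $A$) one has $\chi(S)=\chi(\widetilde{S})-\chi(A)+\chi(q(A))$, since the cells of $S$ are exactly the cells of $\widetilde{S}$ not in $A$ together with the cells of $q(A)$. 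This gives $\chi(S)=d-2e+e=d-e$, i.e. $-\chi(S)=e-d$.

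The only point requiring care — and the step I expect to be the actual obstacle, such as it is — is the verification of the two disjointness facts implicit above: that the $2e$ turns have pairwise disjoint closures in $\widetilde{S}$, and that the $e$ glued arcs have pairwise disjoint closures in $S$, so that the cell counts above are legitimate. The first is immediate: on a single polygonal boundary, consecutive turns are separated by arcs, so their closures are disjoint. For the second, the combinatorics of compatibility is what makes it work: gluing a turn of type $(\alpha_i,\alpha_j)$ to a compatible turn of type $(\beta_{j-1},\beta_i)$ identifies only the endpoint adjacent to the $\alpha_i$-arc with the endpoint adjacent to the $\beta_i$-arc, and the $\alpha_j$-endpoint with the $\beta_{j-1}$-endpoint, reflecting that $g$ reads $\cdots a_i b_i \cdots$ and $\cdots b_{j-1} a_j \cdots$; hence each corner of $S$ is an endpoint of exactly one glued turn. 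Once this bookkeeping is in place the formula follows. As a sanity check, two annulus-pieces glued along a single turn give $d=0$, $e=1$ and form a pair of pants, for which indeed $-\chi = 1 = e-d$.
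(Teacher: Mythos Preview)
Your argument is correct. You compute $\chi(S)$ by an inclusion--exclusion count for the quotient $\widetilde{S}\to S$: the disjoint union of pieces has $\chi(\widetilde{S})=d$, and each of the $e$ gluings identifies two disjoint closed arcs to one, dropping $\chi$ by $1$. The disjointness checks you flag are indeed the only bookkeeping needed, and your justification is adequate.

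The paper takes a different, slightly more topological route: it caps off every hole of $S$ to obtain a surface $S'$ that deformation retracts onto the gluing graph $\Gamma_S$ itself, so $\chi(S')=\chi(\Gamma_S)=v-e$; then $-\chi(S)=-\chi(S')+(v-d)$ since there are $v-d$ annulus-pieces, giving $e-d$. What the paper's approach buys is the conceptual picture that the capped-off surface \emph{is} homotopically the gluing graph, a fact reused later (e.g.\ in the proof of Lemma~\ref{lemma: stl and simple surfaces} to see that $\chi(\Gamma_S)=1$ forces the capped-off surface to be a disk). Your approach, by contrast, is a clean direct cell count that avoids the retraction step entirely and would work just as well in settings where the retraction is less transparent. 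Either is perfectly acceptable here.
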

	\begin{proof}
		By filling in all holes in annulus-pieces of $S$ (using disks), we obtain a surface $S'$ that deformation retracts to $\Gamma_S$. If $v$ is the number of vertices in $\Gamma_S$, then, 
		$$-\chi(S)=-\chi(S')+(v-d)=-\chi(\Gamma_S)+(v-d)=e-v+v-d=e-d,$$
		and $v-d$ is the number of annulus-pieces in $S$.
	\end{proof}
	
	To allow some desired flexibility, we do not require simple surfaces to be connected. A simple surface $S$ is connected if and only if the gluing graph $\Gamma_S$ is connected. Connected simple surfaces $S$ with $\chi(\Gamma_S)=1$ are closely related to torsion-admissible surfaces in normal form.
	\begin{lemma}\label{lemma: stl and simple surfaces}
		Given an element $g=a_1b_1\cdots a_Lb_L$ in the free product $A*B$. For a connected simple surface $S$ with $\chi(\Gamma_S)=1$, 
		if each hole in an annulus-piece on the $A$-side ($B$-side) represents a torsion element in $A$ (resp. $B$), 
		then $S$ is torsion-admissible for $g$ of degree $n(S)$.
		Conversely, any torsion-admissible surface $S$ for $g$, there exists a connected simple surface $S'$ of the same degree with $\chi(\Gamma_{S'})=1$ such that $-\chi(S')\le -\chi(S)$ and $H(S')\le H(S)$. 
		Thus
		\begin{equation}\label{eqn: stl simple estimate}
			\stl(g)\ge\inf \frac{H(S)}{n(S)}=\inf \frac{-\chi(S)}{n(S)},
		\end{equation}
		where each infimum is taken over all connected simple surfaces $S$ with $\chi(\Gamma_S)=1$.
	\end{lemma}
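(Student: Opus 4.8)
The plan is to establish the two implications packaged in the statement — a connected simple surface with $\chi(\Gamma_S)=1$ and torsion holes is torsion-admissible, and conversely every torsion-admissible surface is dominated by such a simple surface — and then read off the inequality (\ref{eqn: stl simple estimate}). First dispose of the case where $g$ is torsion: then $g^n$ bounds a disk for some $n$ and every quantity in (\ref{eqn: stl simple estimate}) vanishes, so assume henceforth that $g$ is not torsion, whence $-\chi(S)=-\chi^-(S)=H(S)-1$ for every planar surface $S$ in sight. For the first implication, let $S$ be connected simple with $\chi(\Gamma_S)=1$, so that $\Gamma_S$ is a tree. As in the proof of Lemma \ref{lemma: chi formula via graph}, filling in the holes of the annulus-pieces gives a compact oriented surface $S''$ — oriented since paired turns are glued by orientation-reversing maps — which deformation retracts onto $\Gamma_S$; a tree is contractible, so $S''$ is a disk and $S$ is planar. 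The boundary of $S$ consists of the holes of the annulus-pieces, which are torsion by hypothesis, together with the single boundary component $\partial_0 S$ of $S''$; along $\partial_0 S$ every turn is interior (all turns are paired), so consecutive arcs abut, and the compatibility rule for turns forces the cyclic word read along $\partial_0 S$ to be $(\alpha_1\beta_1\cdots\alpha_L\beta_L)^{n(S)}$. Hence $S$ is torsion-admissible of degree $n(S)$.

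For the converse, by Lemma \ref{lemma: put into normal form} I may take $S=S_A\cup_F S_B$ in normal form, of the same degree and with $-\chi$ and $H$ not increased. A normal-form surface automatically satisfies $\chi(\Gamma_S)=1$: with $v$ components and $m=|F|$ gluing arcs, each arc-gluing drops $\chi$ by $1$, so $\chi(S)=\sum_i(1-h_i)-m=v-H(S)-m$, and comparing with $\chi(S)=1-H(S)$ (genus zero, $H(S)+1$ boundary components) gives $\chi(\Gamma_S)=v-m=1$. So it suffices to modify $S$ — without increasing $-\chi$ or $H$ and keeping the degree — so that every component of $S_A$ and $S_B$ has at most one hole, and then cap off any hole of trivial winding class; what remains is a connected simple surface $S'$ with $\chi(\Gamma_{S'})=1$, and Lemma \ref{lemma: chi formula via graph} records $-\chi(S')\le-\chi(S)$ and $H(S')\le H(S)$ along the way. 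To cut down the holes of a component $Q$ of $S_A$ with $h\ge 2$ holes, I would isotope $f$ to bring one hole near a turn $\tau$ of $Q$, cut along a short arc $\delta$ with endpoints on $\tau$ that bounds together with part of $\tau$ a sub-annulus $R$ containing only that hole, and — homotoping $f$ near $\delta$, inserting an auxiliary disk-piece on the $B$-side if needed so that the two sides of $\delta$ become a compatible pair of turns — split $Q$ into the annulus-piece $R$ and a polygon carrying the remaining $h-1$ holes; by Lemma \ref{lemma: chi formula via graph} this changes neither $-\chi$, nor $H$, nor the degree, and iterating finishes the reduction.

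\textbf{The main obstacle} is making this surgery legitimate. The split-off annulus $R$ has winding class conjugate to the (torsion) element its hole represents, hence automatically torsion; what is not automatic is that this class be a product of the $a_i$'s, i.e.\ lie in $\langle a_1,\dots,a_L\rangle$, which is required for it to be the winding class of a piece. One must therefore choose $\delta$ to arrange this when possible, and for a hole whose conjugacy class provably cannot be so expressed, exploit the relation $w=\prod_j g_j t_j^{\pm1}g_j^{-1}$ (in $A$) that a component's holes $t_j$ impose on its winding class $w$ to show that such holes occur in cancelling configurations, which can then be banded together and the resulting trivial boundary capped off. Carrying this out while preserving connectedness, the normal form, and the degree is the technical core of the argument.

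Finally, (\ref{eqn: stl simple estimate}) drops out by assembling the pieces. By Lemma \ref{lemma: stl via surfaces}, $\stl(g)=\inf_S H(S)/n(S)=\inf_S -\chi(S)/n(S)$ over torsion-admissible $S$, and the converse just proved dominates each such $S$, with the same degree and no larger $H$ or $-\chi$, by a connected simple surface with $\chi(\Gamma)=1$; hence $\stl(g)\ge\inf_S H(S)/n(S)\ge\inf_S -\chi(S)/n(S)$ over that class. The two infima coincide: given a connected simple $S$ with $\chi(\Gamma_S)=1$ and $N\ge 1$, the planar degree-$N$ cover $S_N$ with $\partial_0 S_N$ connected (from the proof of Lemma \ref{lemma: stl via surfaces}) is again a connected simple surface, and the identity $\chi(S)=\chi(\Gamma_S)-H(S)$ — immediate from Lemma \ref{lemma: chi formula via graph} — forces $\chi(\Gamma_{S_N})=1$; since $H(S_N)/n(S_N)=(H(S)-1)/n(S)+1/(N\,n(S))\to -\chi(S)/n(S)$ as $N\to\infty$, we get $\inf H/n\le\inf -\chi/n$, while $\inf H/n\ge\inf -\chi/n$ is trivial.
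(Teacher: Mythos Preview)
Your first implication is essentially the paper's argument and is fine. The converse, however, is where you diverge and where the self-identified ``main obstacle'' arises --- and the paper's route shows that this obstacle is entirely avoidable. You try to reduce a multi-holed component $Q$ of $S_A$ to pieces by peeling off holes one at a time via surgery along an arc $\delta$ near a turn. This forces the split-off annulus $R$ to carry a polygonal boundary whose winding class must simultaneously be (conjugate to) the class of the single hole it encloses \emph{and} be a word in the $a_i$'s; since a hole of a normal-form surface can represent an arbitrary torsion conjugacy class in $A$, these two constraints genuinely conflict, and your proposed fix of banding cancelling holes together is both vague and unnecessary.

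The paper instead performs a \emph{wholesale replacement}: for each component $Q$ of $S_A$ with $h\ge 1$ holes, discard the interior of $Q$ entirely and glue in the unique piece (disk or annulus) with the \emph{same} polygonal boundary. Nothing needs to be checked about individual holes. The polygonal boundary is unchanged, so its winding class is still the same word in the $a_i$'s; and since the original $Q$ was a planar polygon-with-holes, that winding class is conjugate to a product of conjugates of the (torsion) hole classes, hence lies in $A_{tor}$, so the replacement piece is legal. The resulting surface $S'$ has the same turns and gluings (hence the same $\Gamma_{S'}=\Gamma_S$, connected with $\chi=1$, and the same degree), and strictly fewer holes whenever some $h\ge 2$, giving $H(S')\le H(S)$ and $-\chi(S')\le -\chi(S)$ immediately. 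This one-line replacement is the missing idea; once you see it, the ``technical core'' you describe evaporates.

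For the equality of the two infima, your planar-cover argument is in the right spirit but you have not verified that the cover $S_N$ inherits a simple-surface structure with $\chi(\Gamma_{S_N})=1$: the preimage of a piece under the cyclic cover need not be a single piece, and the gluing graph of the cover is a priori an $N$-fold cover of the tree $\Gamma_S$ branched over nothing, which already has $\chi=N$, not $1$. The paper handles this instead by a branched construction (unwrapping a single annulus-piece while taking $k$ copies of the rest; see the proof of Theorem~\ref{thm: rationality} and Figure~\ref{fig: branchedcover}), which manifestly keeps $\Gamma$ a tree.
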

	\begin{proof}
		For a connected simple surface $S$, if we cap off all the holes in its annulus-pieces, then $S$ deformation retracts to $\Gamma_S$. 
		Thus if $\chi(\Gamma_S)=1$, the capped-off surface is a connected surface with boundary of Euler characteristic $1$, i.e. a disk.
		Therefore a simple surface $S$ with $\chi(\Gamma_S)=1$ has genus zero and a \emph{single} boundary component that represents $g^{n(S)}$, and all other boundary components of $S$
		are holes in its annulus-pieces. Hence if each hole in an annulus-piece in $\tcP_A$ (resp. $\tcP_B$) represents a torsion element in $A$ (resp. $B$), then $S$ is torsion-admissible for $g$ of degree $n(S)$ by definition.
		
		Conversely, given any torsion-admissible surface $S$ for $g$ of degree $n$, we can put it in normal form by Lemma \ref{lemma: put into normal form} without changing the degree. Each component of $S_A$ (resp. $S_B$) gives rise to a piece in $\tcP_A$ (resp. $\tcP_B$) except that it may contain more than one hole. For each component of $S_A$ containing more than one hole, replace it by a piece with the same polygonal boundary, which is a disk or an annulus depending on whether the winding class of the polygonal boundary is trivial or not. 
		Note that the winding class of such a polygonal boundary on the $A$-side (resp. $B$-side) always lies in $A_{tor}$ (resp. $B$-side) since it is the product of those torsion elements corresponding to the holes (up to conjugacy).
		The surface $S'$ obtained this way is connected and planar. Thus $S'$ is a connected simple surface with $\chi(\Gamma_{S'})=1$ and degree $n(S')=n(S)$. Moreover, as $S'$ is obtained from a normal form of $S$ by eliminating some holes, we have $-\chi(S')\le -\chi(S)$ and $H(S')\le H(S)$ using Lemma \ref{lemma: put into normal form}. See Figure \ref{fig: S_A} and Figure \ref{fig: simple_S_A} for an example.
		
		The two infima are equal for a similar reason to that of formula (\ref{eqn: stl via surfaces}) by taking suitable covering spaces; see Figure \ref{fig: branchedcover} for an illustration of a good covering space of a simple surface.
	\end{proof}
	
	It is often convenient to consider a subfamily of simple surfaces, where only a subset of types of pieces are allowed. 
	\begin{definition}\label{def: sufficient collection}
		Given a collection $\mathcal{P}\subset\tcP$ of types of pieces, a simple surface $S$ (with respect to $g$) is called \emph{$\mathcal{P}$-simple} if all pieces used in $S$ have types in $\mathcal{P}$.
		A collection $\mathcal{P}$ is called \emph{sufficient} if
		$$
		\stl(g)\ge\inf \frac{-\chi(S)}{n(S)},
		$$
		where the infimum is taken over all connected $\mathcal{P}$-simple surfaces $S$ with $\chi(\Gamma_S)=1$.
	\end{definition}
	
	Lemma \ref{lemma: stl and simple surfaces} shows that taking $\mathcal{P}=\tcP$ gives a sufficient collection.
	In many cases large pieces can be simplified into several small pieces (see Section \ref{subsec: splitting}), which allows a finite small collection $\mathcal{P}$ to be sufficient.
	
	If each hole in any annulus-piece in a sufficient collection $\mathcal{P}$ is guaranteed to represent a torsion element, 
	then stl is equal to the infimum by Lemma \ref{lemma: stl and simple surfaces}.
	\begin{corollary}\label{cor: simple surface is enough}
		Suppose
		\begin{enumerate}
		    \item both $A_{tor}$ and $B_{tor}$ are torsion groups,
		    \item or $g=a_1b_1\cdots a_Lb_L$, where the subgroup generated by $\{a_1,\cdots, a_L\}$ (resp. $\{b_1,\cdots,b_L\}$) is a torsion group.
		\end{enumerate}
		Then for any fixed sufficient collection $\mathcal{P}$, we have
		$$
		\stl(g)=\inf \frac{-\chi(S)}{n(S)},
		$$
		where the infimum is taken over all connected $\mathcal{P}$-simple surfaces $S$ with $\chi(\Gamma_S)=1$.
	\end{corollary}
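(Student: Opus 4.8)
The plan is to prove the reverse of the inequality built into Definition \ref{def: sufficient collection}: that definition already gives $\stl(g)\ge\inf\frac{-\chi(S)}{n(S)}$ over connected $\mathcal{P}$-simple surfaces $S$ with $\chi(\Gamma_S)=1$, so it remains to show $\stl(g)\le\frac{-\chi(S)}{n(S)}$ for \emph{each} such $S$. The mechanism is to strengthen the first half of Lemma \ref{lemma: stl and simple surfaces}: that lemma says a connected simple surface $S$ with $\chi(\Gamma_S)=1$ is torsion-admissible \emph{provided} each hole in each annulus-piece represents a genuine torsion element of the appropriate factor. So the whole proof reduces to verifying that, under either hypothesis (1) or (2), this proviso is automatically satisfied for every $\mathcal{P}$-simple surface for $g$. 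Throughout I keep the standing assumption of Section \ref{subsec: normal form} that $g$ does not conjugate into $A$ or $B$; if $g$ is torsion the statement is trivial (both sides vanish), and otherwise $-\chi^-(S)=-\chi(S)$ for every torsion-admissible $S$ by the Remark following Lemma \ref{lemma: stl via surfaces}, which is what lets me pass from the complexity $-\chi^-$ appearing in Lemma \ref{lemma: stl via surfaces} to the $-\chi$ appearing in the statement.

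The key step is the torsion check. The hole of an annulus-piece represents the winding class of its polygonal boundary with reversed orientation, so it suffices to show that winding class is torsion. Under hypothesis (1), the winding class of an annulus-piece on the $A$-side lies in $A_{tor}$ by the very definition of a piece, and $A_{tor}$ is a torsion group by assumption, so it is torsion; symmetrically on the $B$-side. Under hypothesis (2), I instead use that the winding class of any polygonal boundary on the $A$-side is, by definition, the conjugacy class of a word whose letters are the labels $\alpha_{i_1},\dots,\alpha_{i_k}$ of its arcs, hence represents an element of the subgroup $\langle a_1,\dots,a_L\rangle\le A$; this subgroup is a torsion group by assumption, so the winding class is torsion (and in particular it lies in $A_{tor}$, so there is no clash with the definition of a piece). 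The same argument applies verbatim on the $B$-side using $\langle b_1,\dots,b_L\rangle$. In either case every hole of every annulus-piece of $S$ represents a torsion element, so $S$ is torsion-admissible for $g$ of degree $n(S)$ by Lemma \ref{lemma: stl and simple surfaces}.

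To conclude: for any connected $\mathcal{P}$-simple surface $S$ with $\chi(\Gamma_S)=1$, torsion-admissibility and Lemma \ref{lemma: stl via surfaces} give $\stl(g)\le\frac{-\chi^-(S)}{n(S)}=\frac{-\chi(S)}{n(S)}$; taking the infimum over all such $S$ yields $\stl(g)\le\inf\frac{-\chi(S)}{n(S)}$, and combined with sufficiency of $\mathcal{P}$ this is the desired equality. I do not expect a serious obstacle here; the only delicate point is the bookkeeping that (a) correctly identifies the hole of an annulus-piece with the reversed winding class of its polygonal boundary, and (b) under (2) recognizes that this winding class is literally a product of the chosen letters $a_i$ rather than merely \emph{some} element of $A_{tor}$ — which is precisely what makes (2) a genuinely weaker hypothesis than demanding that $A_{tor}$ itself be a torsion group.
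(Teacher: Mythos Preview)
Your proposal is correct and follows essentially the same approach as the paper: the paper's proof simply notes that the ``$\ge$'' direction is the definition of sufficiency, and that under either hypothesis every connected simple surface $S$ with $\chi(\Gamma_S)=1$ is torsion-admissible by Lemma~\ref{lemma: stl and simple surfaces}. You have spelled out in more detail exactly why the winding class of each annulus-piece is torsion under hypotheses (1) and (2), which is the content the paper leaves implicit.
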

	\begin{proof}
		The ``$\ge$'' direction holds by definition. The other direction holds since under both assumptions every connected simple surface $S$ 
		with $\chi(\Gamma_S)=1$ is torsion-admissible by Lemma \ref{lemma: stl and simple surfaces}.
	\end{proof}
	
	\subsection{Isometric embedding}\label{subsec: isom embed}
	As an application of Corollary \ref{cor: simple surface is enough}, injective homomorphisms of factor groups induce an embedding of the free products, which preserves the stable torsion length for generic elements, under suitable assumptions.
	\begin{theorem}\label{thm: isometric embedding}
		Let $i_A :A\to A'$ and $i_B: B\to B'$ be injective homomorphisms. Let $g\in A*B$ be an element that is not conjugate into $A$ or $B$. 
		Suppose
		\begin{enumerate}
		    \item either both $A'_{tor}$ and $B'_{tor}$ are torsion groups,\label{item: assumption one}
		    \item or $g$ is conjugate to a cyclically reduced word $a_1b_1\cdots a_Lb_L$ with each $a_j\in A$ and $b_j\in B$, 
		    where the subgroup generated by $\{a_1,\cdots, a_L\}$ and the subgroup generated by $\{b_1,\cdots,b_L\}$ are torsion groups.\label{item: assumption two}
		\end{enumerate}
		Then the induced map $i: A*B\to A'*B'$ preserves the stable torsion length of $g$, i.e. 
		$$\stl_{A*B}(g)=\stl_{A'*B'}(i(g)).$$
	\end{theorem}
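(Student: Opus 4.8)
The plan is to prove the two inequalities $\stl_{A'*B'}(i(g)) \le \stl_{A*B}(g)$ and $\stl_{A*B}(g) \le \stl_{A'*B'}(i(g))$ separately. The first inequality is immediate from monotonicity (Lemma \ref{lemma: monotone}): the homomorphism $i: A*B \to A'*B'$ sends torsion elements to torsion elements, so $\stl$ can only decrease. The content is entirely in the reverse inequality, and here the idea is to use the characterization of $\stl$ in terms of simple surfaces from Corollary \ref{cor: simple surface is enough}, pulling back a near-optimal simple surface for $i(g)$ in $A'*B'$ to one for $g$ in $A*B$ of no greater complexity and the same degree.

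The key steps, in order, are as follows. First, I would check that both $g$ and $i(g)$ satisfy the hypotheses of Corollary \ref{cor: simple surface is enough}, so that $\stl_{A*B}(g)$ and $\stl_{A'*B'}(i(g))$ are both computed as infima of $-\chi(S)/n(S)$ over connected simple surfaces with $\chi(\Gamma_S)=1$; under assumption \eqref{item: assumption one} this is clear since $A'_{tor}, B'_{tor}$ being torsion groups forces $A_{tor}, B_{tor}$ to be torsion groups (as $i_A, i_B$ are injective), and under assumption \eqref{item: assumption two} the relevant finitely generated subgroups are torsion groups on both sides by hypothesis. Note also that $i(g)$ is not conjugate into $A'$ or $B'$: writing $g$ as a cyclically reduced word $a_1b_1\cdots a_Lb_L$ with $a_j \ne 1$, $b_j \ne 1$, injectivity of $i_A, i_B$ shows $i_A(a_j) \ne 1$ and $i_B(b_j) \ne 1$, so $i(g) = i_A(a_1)i_B(b_1)\cdots i_A(a_L)i_B(b_L)$ is still cyclically reduced of the same syllable length $L$. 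Second, and this is the crux: given a connected simple surface $S'$ for $i(g)$ with $\chi(\Gamma_{S'})=1$, I would construct a simple surface $S$ for $g$ with the same gluing graph (hence same $\chi(\Gamma_S)=1$, same degree $n(S)=n(S')$, and by Lemma \ref{lemma: chi formula via graph} the same $-\chi$, since the number of disk-pieces versus annulus-pieces is unchanged). The pieces of $S'$ are determined combinatorially by their polygonal boundary (a cyclic word in the arcs $\alpha_i, \beta_i$, which correspond bijectively under $i$) together with, for annulus-pieces, a winding class that is a torsion element of $A'$ or $B'$. A polygonal boundary built from arcs labeled $\alpha_{i_1}, \dots, \alpha_{i_k}$ represents the element $a_{i_1}\cdots a_{i_k}$ in $A$ (up to conjugacy) and $i_A(a_{i_1})\cdots i_A(a_{i_k}) = i_A(a_{i_1}\cdots a_{i_k})$ in $A'$; so the winding class of the corresponding piece in $S'$ is $i_A$ of the winding class in $A$. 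The essential point is: a disk-piece of $S'$ (winding class trivial in $A'$) must come from a polygonal boundary whose winding class is trivial already in $A$, by injectivity of $i_A$; and an annulus-piece of $S'$ has winding class $i_A(w)$ for $w \in A$, which is a torsion element of $A'$, hence — again by injectivity — $w$ is a torsion element of $A$ of the same order, so the corresponding annulus-piece over $A$ is legitimate. Thus we build $S$ piece by piece, gluing along the same compatible turns as in $S'$. This shows $-\chi(S)/n(S) = -\chi(S')/n(S')$, and taking the infimum over $S'$ gives $\stl_{A*B}(g) \le \stl_{A'*B'}(i(g))$.

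The main obstacle is the second step's subtlety about whether a simple surface over $A'$ genuinely descends: one must be careful that the gluing data (the pairing of compatible turns) is purely combinatorial — depending only on the syllable structure of $g$, i.e. on the indices $i, j$ labeling arcs and the mod-$L$ compatibility condition $(\alpha_i, \alpha_j) \leftrightarrow (\beta_{j-1}, \beta_i)$ — and is therefore literally identical on both sides since $i$ preserves syllable length and the ordering of syllables. The only genuinely group-theoretic input is the identification of winding classes, where injectivity of $i_A$ and $i_B$ does all the work: it is precisely what guarantees triviality of a winding class is detected over $A$ (so we don't accidentally need an annulus-piece over $A$ where $S'$ used a disk-piece, which would change $d$ and hence $-\chi$), and that torsion winding classes over $A'$ pull back to torsion winding classes over $A$ (so the pulled-back annulus-pieces are admissible). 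One should also double-check that the degree $n(S)$, defined as the sum of the positive winding numbers of the boundary components carrying arcs, is unchanged — but this too is combinatorial, read off from the gluing graph and the cyclic arc-words, so it transfers verbatim. With these observations the proof is essentially a transport-of-structure argument along $i$, using Corollary \ref{cor: simple surface is enough} to reduce both sides to the same combinatorial optimization.
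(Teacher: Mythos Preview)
Your proposal is correct and follows essentially the same route as the paper: monotonicity for one inequality, and for the other, pulling back each simple surface $S'$ for $i(g)$ piece-by-piece to a simple surface $S$ for $g$ with the identical gluing graph, using injectivity of $i_A,i_B$ to match disk-pieces with disk-pieces and annulus-pieces with annulus-pieces. One small point worth tightening: when you say an annulus-piece of $S'$ has winding class $i_A(w)$ ``which is a torsion element of $A'$'', note that the definition only guarantees $i_A(w)\in A'_{tor}$ (the subgroup \emph{generated} by torsion), so this is exactly where the hypotheses enter---under assumption~(\ref{item: assumption one}) because $A'_{tor}$ is itself a torsion group, and under assumption~(\ref{item: assumption two}) by arguing directly that $w$ lies in the torsion subgroup $\langle a_1,\dots,a_L\rangle$---and the paper makes this case split explicit.
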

	\begin{proof}
		It suffices to show that $\stl_{A*B}(g)\le\stl_{A'*B'}(i(g))$ since the other direction follows by monotonicity (Lemma \ref{lemma: monotone}). 
		Since $i_A$ is injective, torsion elements in $A$ correspond to torsion elements in the image of $i_A$.
		Thus $A_{tor}$ is a torsion group if $A'_{tor}$ is, and similarly for $B_{tor}$.
		Hence by Corollary \ref{cor: simple surface is enough}, under either assumption we have $\stl_{A*B}(g)=\inf_{S} \frac{-\chi(S)}{n(S)}$,
		where the infimum is taken over connected simple surfaces $S$ with $\chi(\Gamma_S)=1$. 
		
		On the other hand, by Lemma \ref{lemma: stl and simple surfaces}, we know $\inf_{S'} \frac{-\chi(S')}{n(S')}\le \stl_{A'*B'}(i(g))$, where the infimum is taken over all connected simple surfaces $S'$ for $i(g)$ with $\chi(\Gamma_{S'})=1$.
		Thus it suffices to show that $\inf_{S} \frac{-\chi(S)}{n(S)}\le \inf_{S'} \frac{-\chi(S')}{n(S')}$.
		We prove this by showing that every simple surface for $g'\defeq i(g)$ naturally pulls back to a simple surface for $g$.
		
		Up to conjugation, we may write $g=a_1b_1\cdots a_L b_L$, where $a_j\in A\setminus\{id\}$, $b_j\in B\setminus\{id\}$, and $L\ge1$. Then $g'=a'_1b'_1\cdots a'_L b'_L$, where $a'_j=i_A(a_j)$ and $b'_j=i_B(b_j)$.
		
		Let $C'$ be a piece on the $A'$-side in a simple surface $S'$ for $g'$, and suppose its polygonal boundary consists of arcs corresponding to $a'_{j_1}, \cdots, a'_{j_k}$ in the cyclic order, for some $k\in\Z_+$. Then we can construct a corresponding polygonal boundary by simply replacing each label $a'_{j_i}$ to $a_{j_i}$. Suppose its winding class is $w\in A$. 
		
		We claim that $w$ is torsion under both assumptions. Since $w$ is a product of $a_j$'s, this is obvious under assumption (\ref{item: assumption two}). If $A'_{tor}$ is a torsion group as in assumption (\ref{item: assumption one}), then the winding class of the polygonal boundary of $C'$ must be $i_A(w)$, which lies in $A'_{tor}$ and thus must be torsion. Since $i_A$ is injective, we know $w$ must be a torsion element as well in this case.
		
		It follows that the polygonal boundary we construct bounds an $A$-piece $C$. Moreover, since $w=id$ if and only if $i_A(w)=id$, the piece $C$ has the same topological type as the piece $C'$. Similarly we can construct a $B$-piece corresponding to any $B'$-piece. Doing this for all pieces of the simple surface $S'$, we obtain pieces that assemble accordingly to a simple surface $S$ for $g$ that has the same degree as $S'$ and $\chi(S)=\chi(S')$. Thus
		$\inf_{S} \frac{-\chi(S)}{n(S)}\le \inf_{S'} \frac{-\chi(S')}{n(S')}$ as desired, which completes the proof.
	\end{proof}
	
	Now Theorem \ref{introthm: Isom emb} follows as a simple corollary.
	\begin{proof}[Proof of Theorem \ref{introthm: Isom emb}]
	    If $g$ is conjugate to an element in $A$, then $g$ is torsion since $A$ is finite. Then $i(g)$ is also torsion and thus $\stl_{A*B}(g)=0=\stl_{A'*B'}(i(g))$.
	    Similarly the equality holds if $g$ is conjugate to an element in $B$.
	    
	    Now if $g$ is not conjugate into $A$ or $B$, then the assumption (\ref{item: assumption two}) in Theorem \ref{thm: isometric embedding} obviously holds since $A$ and $B$ are finite groups. Hence the equality follows by Theorem \ref{thm: isometric embedding}.
	\end{proof}
	
	\subsection{Lower bounds via linear programming}\label{subsec: linprog lower bound}
	Given a sufficient collection $\mathcal{P}$ of types of pieces (Definition \ref{def: sufficient collection}),
	we describe a (possibly infinite-dimensional) linear programming problem that produces a lower bound of $\stl_G(g)$. 
	This is based on the following observation.
	
	\begin{lemma}\label{lemma: linprog estimate}
		Suppose $\mathcal{P}$ is a sufficient collection for a free product $G=A*B$. 
		For any element $g$ not conjugate into factor groups, we have
		$$\stl_G(g)\ge\inf_S\frac{-\chi(S)}{n(S)},$$
		where the infimum is taken over all (not necessarily connected) $\mathcal{P}$-simple surfaces $S$ with $\chi(\Gamma_S)\ge0$.
	\end{lemma}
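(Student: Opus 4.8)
The plan is to get this as a short formal consequence of the definition of a sufficient collection, rather than by constructing anything new. By Definition~\ref{def: sufficient collection}, the hypothesis that $\mathcal P$ is sufficient says \emph{exactly} that
$$\stl_G(g)\ \ge\ \inf_S\frac{-\chi(S)}{n(S)},$$
where the infimum is over \emph{connected} $\mathcal P$-simple surfaces $S$ with $\chi(\Gamma_S)=1$. So all that is left is the observation that every such $S$ also lies in the larger family named in the lemma: ``connected'' is a special case of ``not necessarily connected'', and $\chi(\Gamma_S)=1$ is a special case of $\chi(\Gamma_S)\ge 0$. Since enlarging the domain of an infimum can only decrease its value, I would record
$$\inf_{\chi(\Gamma_S)\ge 0}\frac{-\chi(S)}{n(S)}\ \le\ \inf_{\chi(\Gamma_S)=1,\ S\text{ connected}}\frac{-\chi(S)}{n(S)}\ \le\ \stl_G(g),$$
the second inequality being sufficiency, and the first being this inclusion of families; the outer inequality is the assertion.

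The one point needing care — and the closest thing to an obstacle — is to check that the two families are being scored by the same invariants, so that the set inclusion is literal. The Euler characteristic $-\chi(S)$ is intrinsic to $S$, so there is nothing to verify there. For the degree, I would invoke the description used in the proof of Lemma~\ref{lemma: stl and simple surfaces}: for a connected $\mathcal P$-simple surface with $\chi(\Gamma_S)=1$, capping the holes of all annulus-pieces yields a disk, so $S$ has a \emph{unique} boundary component carrying arcs, and it winds a positive number of times around $\gamma$; hence the general notion of degree from Definition~\ref{def: simple surf} (the sum of winding numbers over all arc-carrying boundary components) agrees on this sub-family with the degree $n(S)$ in Definition~\ref{def: sufficient collection}. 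I would also mention the degenerate case for completeness: if $g\notin\Gt$ then $\stl_G(g)=\infty$ and the inequality is vacuous, and if $g\in\Gt$ the ``connected, $\chi(\Gamma_S)=1$'' family is nonempty by the converse half of Lemma~\ref{lemma: stl and simple surfaces}, so the middle infimum above is not over an empty set.

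I do not expect any substantive difficulty beyond this bookkeeping: the real work — that simple surfaces with $\chi(\Gamma_S)=1$ already bound $\stl_G(g)$ from below — has already been carried out in Lemma~\ref{lemma: stl and simple surfaces}, and the present statement is the deliberately lossy relaxation of that bound to a family of surfaces cut out by the \emph{linear} constraint $\chi(\Gamma_S)\ge 0$, which is precisely what makes the right-hand infimum expressible as a linear program in Section~\ref{subsec: linprog lower bound}.
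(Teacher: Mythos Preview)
Your proof is correct and is essentially the same as the paper's: both observe that the family of connected $\mathcal P$-simple surfaces with $\chi(\Gamma_S)=1$ is contained in the larger family named in the lemma, so the infimum can only drop, and then invoke Definition~\ref{def: sufficient collection}. Your additional bookkeeping about the degree and the degenerate case is more careful than the paper's one-line proof, but not a different argument.
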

	\begin{proof}
		Since the family of surfaces we consider here contains all connected $\mathcal{P}$-simple surfaces $S$ with $\chi(\Gamma_S)=1$, 
		the result follows from the definition of sufficient collections (Definition \ref{def: sufficient collection}).
	\end{proof}
	
	The reason to consider $\mathcal{P}$-simple surfaces that are not necessarily connected with the relaxed constraint $\chi(\Gamma_S)\ge0$ is that this family of surfaces is easier to work with, 
	allowing us to encode such surfaces as vectors in a nice subspace of a vector space as follows.
	
	Given a reduced word $g$ and a sufficient collection $\mathcal{P}$ as above, let $V_\mathcal{P}=\R^{\mathcal{P}}$. 
	Let $\{e_P\mid P\in\mathcal{P}\}$ be the standard basis, where $e_P$ is the positive unit vector in the $P$-direction.
	For any $\mathcal{P}$-simple surface $S$ and any $P\in \mathcal{P}$, let $x_P$ be the number of pieces of type $P$ in $S$. 
	Associate to $S$ a vector $v(S)\in V_\mathcal{P}$ so that the $P$-component of $v(S)$ is $x_P$ for any $P\in \mathcal{P}$.
	
	The vector $v(S)$ is a non-negative integer point in $V_\mathcal{P}$ satisfying some rational linear constraints that we describe below.
	For any turn type $T$ (e.g. $T=(\alpha_i,\alpha_j)$ or $(\beta_i,\beta_j)$), there is a linear function $f_T:V_\mathcal{P}\to\R$ such that for the standard basis $f_T(e_P)$ counts the number of turns of type $T$ in a piece of type $P$ for each $P\in\mathcal{P}$. For any two compatible turn types
	$T,T'$, i.e. $T=(\alpha_i,\alpha_j)$ and $T'=(\beta_{j-1},\beta_i)$, we have 
	$$f_T(v(S))=f_{T'}(v(S))$$
	for any $\mathcal{P}$-simple surface $S$ since each turn is glued to another turn by definition. We refer to this set of equations as the \emph{gluing conditions}.
	
	Let $\chi_\Gamma:V_\mathcal{P}\to\R$ be the linear function determined by $\chi(e_P)=1-\frac{e}{2}$ for each $P\in\mathcal{P}$, 
	where $e$ is the number of turns on the polygonal boundary of the piece $P$.
	Similarly we have a linear function $\chi_o:V_\mathcal{P}\to\R$ with the property that $\chi_o(e_P)=\chi(P)-\frac{e}{2}$ for each $P\in\mathcal{P}$, where $\chi(P)$ is $1$ if $P$ is a disk-piece and is $0$ if $P$ is an annulus-piece.
	Then it is straightforward to see that $\chi_\Gamma(v(S))=\chi(\Gamma_S)$ and $\chi_o(v(S))=\chi(S)$ for any $\mathcal{P}$-simple surface $S$ with gluing graph $\Gamma_S$.
	
	Finally, let $n:V_\mathcal{P}\to\R$ be the linear function such that $n(e_P)$ counts the number of copies of $\alpha_1$ on the polygonal boundary of $P$ for each $P\in\mathcal{P}$. Then for any $\mathcal{P}$-simple surface $S$, its degree is $n(v(S))$.
	
	
	\begin{definition}
		Given the word $g$ and a sufficient collection $\mathcal{P}$, let $C_\mathcal{P}$ be the subspace of $V_\mathcal{P}$ consisting of vectors $x$ with non-negative components such that $x$ satisfies all gluing conditions, $\chi_\Gamma(x)\ge0$, and $n(x)=1$.
	\end{definition}
	Note that $C_\mathcal{P}$ depends on $g$ (since $\mathcal{P}$ does, for instance).	
	
	Summarizing up the discussion above, we have:
	\begin{lemma}\label{lemma: surf to vect}
		For any $\mathcal{P}$-simple surface of degree $n$, the vector $v(S)/n$ is a rational point in $C_\mathcal{P}$ and $\chi_o(v(S)/n)=\chi(S)/n$.
	\end{lemma}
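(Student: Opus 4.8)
The plan is simply to unwind the definition of $C_\mathcal{P}$ and check that $v(S)/n$ satisfies each of its defining constraints, relying on the bookkeeping set up in the paragraphs preceding the lemma. By construction the $P$-component of $v(S)$ is the number $x_P$ of pieces of type $P$ appearing in $S$, so $v(S)$ is a vector with non-negative integer entries; dividing by the positive integer $n=n(S)$ gives a point of $V_\mathcal{P}$ with non-negative rational coordinates, in particular a rational point. Next I would verify the gluing conditions: for a turn type $T$ and a compatible type $T'$, the quantities $f_T(v(S))$ and $f_{T'}(v(S))$ count, respectively, all turns of type $T$ and all turns of type $T'$ occurring among the pieces of $S$; since the simple-surface structure pairs each turn of type $T$ with exactly one turn of type $T'$ and vice versa, these counts agree, and linearity of $f_T,f_{T'}$ propagates the equality $f_T(v(S))=f_{T'}(v(S))$ to $v(S)/n$.

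It then remains to check the two numerical constraints and the Euler characteristic identity, each of which comes straight from the defining formulas for $\chi_\Gamma$, $\chi_o$, and $n$. The value $\chi_\Gamma(v(S))$ equals $\sum_P x_P\bigl(1-\frac{e_P}{2}\bigr)$, which is the number of pieces minus half the total number of turns; since $\Gamma_S$ has one vertex per piece and one edge per glued pair of turns (so twice as many turns as edges), this is exactly $\#V(\Gamma_S)-\#E(\Gamma_S)=\chi(\Gamma_S)$. Similarly $n(v(S))=\sum_P x_P\, n(e_P)$ counts the copies of $\alpha_1$ over all polygonal boundaries of the pieces, which is precisely the degree $n(S)=n$, so $n(v(S)/n)=1$. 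Finally $\chi_o(v(S))=\sum_P x_P\bigl(\chi(P)-\frac{e_P}{2}\bigr)=d-\#E(\Gamma_S)$, where $d$ is the number of disk-pieces (annulus-pieces contributing $\chi(P)=0$); by Lemma \ref{lemma: chi formula via graph} this equals $\chi(S)$, so $\chi_o(v(S)/n)=\chi(S)/n$.

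There is no genuine obstacle here — the argument is pure bookkeeping — but the one point to state carefully is that the conclusion $v(S)/n\in C_\mathcal{P}$ uses $\chi_\Gamma(v(S))=\chi(\Gamma_S)\ge0$, so the lemma must be read for the class of $\mathcal{P}$-simple surfaces with $\chi(\Gamma_S)\ge0$ appearing in Lemma \ref{lemma: linprog estimate}; without that hypothesis one only obtains a non-negative rational point of $V_\mathcal{P}$ satisfying the gluing conditions and $n(\,\cdot\,)=1$. It is also worth double-checking that the identity ``total number of turns equals twice the number of edges of $\Gamma_S$'' is invoked consistently in computing both $\chi_\Gamma$ and $\chi_o$, since that is the only place where the two linear functionals genuinely interact with the combinatorics of the gluing graph.
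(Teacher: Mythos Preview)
Your proof is correct and is exactly the bookkeeping the paper has in mind: the paper gives no separate proof but merely writes ``Summarizing up the discussion above, we have:'' before stating the lemma, so the verification you carry out is precisely what is intended. Your closing remark is also well taken: the lemma as written tacitly assumes $\chi(\Gamma_S)\ge 0$ (the class of surfaces from Lemma~\ref{lemma: linprog estimate}), since otherwise $v(S)/n$ satisfies all defining conditions of $C_\mathcal{P}$ except possibly $\chi_\Gamma\ge 0$; the paper only ever applies the lemma under this hypothesis, so no harm is done, but it is worth flagging.
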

	
	Conversely, we have:
	\begin{lemma}\label{lemma: vect to surf}
		For any rational point $x\in C_\mathcal{P}$, there is some $n\in\Z_+$ and a $\mathcal{P}$-simple surface $S$ of degree $n$, such that $x=v(S)/n$, $\chi(\Gamma_S)\ge0$ and $\chi(S)/n=\chi_o(x)$.
	\end{lemma}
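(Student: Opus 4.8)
The plan is to reverse the assignment $S\mapsto v(S)$ of Lemma \ref{lemma: surf to vect}: starting from a rational point $x\in C_\mathcal{P}$, I will choose how many copies of each type of piece to take and then glue these pieces together, the gluing being made possible precisely by the gluing conditions that $x$ satisfies.

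First I would clear denominators. Since $x$ has non-negative rational coordinates, pick $n\in\Z_+$ to be a common denominator of the coordinates of $x$, so that $nx$ has non-negative integer coordinates. For each $P\in\mathcal{P}$, take $(nx)_P$ disjoint copies of the piece of type $P$, and let $\mathcal{C}$ be the resulting finite collection of pieces. By construction, whatever compatible pairing we eventually put on the turns of $\mathcal{C}$, the resulting $\mathcal{P}$-simple surface $S$ will have $v(S)=nx$.

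The crux is producing that pairing. Recall that compatibility matches a turn of type $(\alpha_i,\alpha_j)$ with a turn of type $(\beta_{j-1},\beta_i)$; reading this both ways, it is a bijection between turn types on the $A$-side and turn types on the $B$-side, and each turn is compatible only with turns of the single opposite type matched to it. Fix a pair $(T,T')$ of compatible turn types. The number of turns of type $T$ occurring among the pieces of $\mathcal{C}$ is $f_T(v(S))=f_T(nx)=n\,f_T(x)$, and likewise there are $n\,f_{T'}(x)$ turns of type $T'$. Since $x$ satisfies the gluing conditions, $f_T(x)=f_{T'}(x)$, so these two sets of turns have equal cardinality; choose any bijection between them and declare each turn paired with its image. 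Carrying this out over all pairs of compatible turn types partitions every turn of $\mathcal{C}$ into a compatible pair, i.e. defines a fixed-point-free involution on the set of turns with paired turns compatible. Thus $\mathcal{C}$ together with this pairing is a $\mathcal{P}$-simple surface $S$ with $v(S)=nx$.

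It remains to verify the three conclusions, and these are now immediate from the linearity of $n$, $\chi_\Gamma$, $\chi_o$ together with the defining constraints of $C_\mathcal{P}$. Since $n(x)=1$, the degree of $S$ is $n(v(S))=n\,n(x)=n$ (in particular positive, as $S$ then contains at least one arc labeled $\alpha_1$), so $x=v(S)/n$. Since $\chi_\Gamma(x)\ge0$, we get $\chi(\Gamma_S)=\chi_\Gamma(v(S))=n\,\chi_\Gamma(x)\ge0$. And $\chi(S)=\chi_o(v(S))=n\,\chi_o(x)$, so $\chi(S)/n=\chi_o(x)$. I expect the only point requiring genuine care to be the one highlighted above: the gluing conditions say exactly that, for each compatible pair of turn types, the turn counts agree, and this is precisely what is needed to assemble the chosen multiset of pieces into an honest simple surface; the remaining verifications are bookkeeping with linear functionals whose geometric meanings have already been established.
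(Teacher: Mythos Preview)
Your argument is correct and follows essentially the same route as the paper: clear denominators, take the indicated multiset of pieces, use the gluing conditions to pair up compatible turns, and read off the remaining conclusions from linearity of $n$, $\chi_\Gamma$, and $\chi_o$. If anything, you are more explicit than the paper about why the pairing exists (equal counts of compatible turn types) and about verifying that the degree equals $n$ via the normalization $n(x)=1$.
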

	\begin{proof}
		Choose $n$ so that $nx$ is an integer point in $V_\mathcal{P}$. Then $nx=\sum_P k_P e_P$ for some non-negative integers $k_P$. Take $k_P$ pieces of type $P$ for each $P\in \mathcal{P}$. Since $x$ satisfies the gluing conditions, so does $nx$. Thus by gluing these pieces along compatible pairs of turns, we obtain a $\mathcal{P}$-simple surface $S$ such that $v(S)=nx$. Then we have $\chi(\Gamma_S)=\chi_\Gamma(nx)=n\chi_\Gamma(x)\ge0$ and $\chi(S)/n=\chi_o(nx)/n=\chi_o(x)$ since both $\chi_\Gamma$ and $\chi_o$ are linear on $V_\mathcal{P}$.
	\end{proof}
	
	It follows that we can compute the infimum in Lemma \ref{lemma: linprog estimate} by minimizing the rational linear function $-\chi_o$ on the compact polyhedron $C_\mathcal{P}$ (see the lemma below),  which is a linear programming problem. This gives a way to compute a nontrivial lower bound of $\stl_G(g)$. We compute two explicit examples in Section \ref{sec: examples}, where the lower bounds are actually sharp in both cases.
	
	\begin{lemma}\label{lemma: linprog}
		For a finite sufficient collection $\mathcal{P}$ with respect to a given element $g$, the set $C_\mathcal{P}$ is a rational compact polyhedron. Moreover, it is nonempty if some power of $g$ is a product of torsion elements in $G$. In this case, the infimum of $-\chi(S)/n(S)$ over all $\mathcal{P}$-simple surfaces $S$ with $\chi(\Gamma_S)\ge0$ is achieved.
	\end{lemma}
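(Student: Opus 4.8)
The plan is to verify the three claims separately, viewing $C_\mathcal{P}$ as the feasible set of an explicit finite linear program in the finite-dimensional space $V_\mathcal{P}=\R^{\mathcal{P}}$. That $C_\mathcal{P}$ is a rational polyhedron is immediate from the definition: it is cut out of $V_\mathcal{P}$ by the coordinate inequalities $x_P\ge 0$, by the gluing equalities $f_T(x)=f_{T'}(x)$ for compatible turn types, by the single inequality $\chi_\Gamma(x)\ge 0$, and by the affine equality $n(x)=1$, and all of these linear functionals have rational (indeed half-integer) coefficients on the standard basis. So the only substantive point is boundedness.

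The key step -- and the one I expect to be the main obstacle -- is to squeeze out of the gluing conditions the fact that on $C_\mathcal{P}$ the weighted number of arcs carrying any prescribed label is forced to be $1$. For each label $\alpha_i$ let $A_i\colon V_\mathcal{P}\to\R$ count $\alpha_i$-arcs and for each $\beta_j$ let $B_j$ count $\beta_j$-arcs; since on any polygonal boundary each arc is immediately preceded and followed by exactly one turn, one has $A_i=\sum_k f_{(\alpha_k,\alpha_i)}=\sum_m f_{(\alpha_i,\alpha_m)}$, and likewise for $B_j$. Substituting the gluing identities $f_{(\alpha_i,\alpha_m)}=f_{(\beta_{m-1},\beta_i)}$ and $f_{(\alpha_k,\alpha_i)}=f_{(\beta_{i-1},\beta_k)}$ and re-indexing the sums (the shift $m\mapsto m-1$ is a bijection mod $L$) yields $A_i(x)=B_i(x)$ and $A_i(x)=B_{i-1}(x)$ for every $x$ satisfying the gluing conditions. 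Hence all $B_j$ coincide, all $A_i$ coincide, and their common value equals $A_1(x)=n(x)=1$ on $C_\mathcal{P}$. Summing over all $2L$ labels, the total weighted arc count of any $x\in C_\mathcal{P}$ is exactly $2L$; since every piece contains at least one arc, $\sum_P x_P\le 2L$, so $C_\mathcal{P}$ is bounded, hence compact.

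For nonemptiness, suppose $g^k$ is a product of torsion elements, so $\stl_G(g)\le \tl_G(g^k)/k<\infty$. By the defining property of a sufficient collection (Definition \ref{def: sufficient collection}), $\stl_G(g)$ bounds from above the infimum of $-\chi(S)/n(S)$ over connected $\mathcal{P}$-simple surfaces $S$ with $\chi(\Gamma_S)=1$; since that bound is finite, at least one such surface $S$ exists, and then $v(S)/n(S)$ lies in $C_\mathcal{P}$ by Lemma \ref{lemma: surf to vect}, so $C_\mathcal{P}\neq\emptyset$. For the last assertion, Lemmas \ref{lemma: surf to vect} and \ref{lemma: vect to surf} identify the infimum of $-\chi(S)/n(S)$ over $\mathcal{P}$-simple surfaces with $\chi(\Gamma_S)\ge 0$ with $\inf\{-\chi_o(x):x\in C_\mathcal{P}\cap\Q^{\mathcal{P}}\}$. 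Since $-\chi_o$ is a rational linear functional and $C_\mathcal{P}$ is a nonempty rational compact polyhedron, this minimum is attained at a rational vertex $x^\ast$ of $C_\mathcal{P}$; invoking Lemma \ref{lemma: vect to surf} on $x^\ast$ produces an honest $\mathcal{P}$-simple surface $S^\ast$ with $\chi(\Gamma_{S^\ast})\ge 0$ and $-\chi(S^\ast)/n(S^\ast)=-\chi_o(x^\ast)$, which therefore realizes the infimum. The only delicate ingredient is the bookkeeping of arc counts in the compactness step; everything else is standard polyhedral geometry.
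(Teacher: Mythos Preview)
Your argument is correct and follows the same overall strategy as the paper's proof: verify that $C_\mathcal{P}$ is cut out by finitely many rational linear constraints, establish boundedness, produce a point in $C_\mathcal{P}$ from a simple surface, and then invoke Lemmas \ref{lemma: surf to vect}--\ref{lemma: vect to surf} together with compactness to realize the infimum at a rational vertex.

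The one place where you go further than the paper is the compactness step. The paper simply asserts that the normalizing condition together with the gluing conditions forces every coordinate to be at most $1$, with the one-line justification ``the normalized number of any piece is at most one since the degree is normalized to be one.'' You instead unwind this by showing that the gluing identities force all the arc-label counts $A_i$ and $B_j$ to coincide and equal $n(x)=1$, so that the total weighted arc count is exactly $2L$ and hence $\sum_P x_P\le 2L$. This is the honest content behind the paper's assertion (a piece with no $\alpha_1$-arc is not directly bounded by $n(x)=1$ alone), so your version is more complete rather than genuinely different. For nonemptiness, the paper appeals to Lemma \ref{lemma: stl and simple surfaces} to build a simple surface and then to Lemma \ref{lemma: surf to vect}; your route via the finiteness of $\stl_G(g)$ and the defining inequality of a sufficient collection is slightly cleaner in that it lands directly on a $\mathcal{P}$-simple surface rather than a $\tcP$-simple one.
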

	\begin{proof}
		By definition, the set $C_\mathcal{P}$ is defined by finitely many rational linear inequalities. The normalizing condition $n(x)=1$ together with gluing conditions implies that each coordinate of $x$ is no more than $1$ for any $x\in C_\mathcal{P}$. In other words, the normalized number of any piece is at most one since the degree is normalized to be one. Thus $C_\mathcal{P}$ is a rational compact polyhedron when $\mathcal{P}$ is finite.
		
		When a power of $g$ is a product of torsion elements in $G$, torsion-admissible surfaces exist. Thus by Lemma \ref{lemma: stl and simple surfaces}, we can reduce any torsion-admissible surface to a simple surface $S$ with $\chi(\Gamma_S)=1$, which yields a rational point in $C_\mathcal{P}$ by Lemma \ref{lemma: surf to vect}. Hence $C_\mathcal{P}$ is nonempty.
		
		Then by Lemmas \ref{lemma: surf to vect} and \ref{lemma: vect to surf}, the infimum of $-\chi(S)/n(S)$ over all $\mathcal{P}$-simple surfaces $S$ with $\chi(\Gamma_S)\ge0$ can be calculated as the infimum of the rational linear function $-\chi_o$ on $C_\mathcal{P}$. Hence by compactness the infimum is achieved by a (rational) vertex $x$ of $C_\mathcal{P}$, which by Lemma \ref{lemma: vect to surf} is of the form $v(S)/n$ for a simple surface $S$ of degree $n$ in the above family.
	\end{proof}

	\section{Free products of finite groups}\label{sec: compute stl}
	In this section we focus on the case of a free product $G=A*B$, where $A$ and $B$ are finite groups.
	We will exhibit an algorithm that computes $\stl_G(g)$ for any given element $g$.
	The nontrivial case is when $g=a_1b_1\cdots a_L b_L$ is a reduced word with $L\ge1$.
	The method applies to the more general case when the subgroups generated by $\{a_1,\cdots, a_L\}$ and $\{b_1,\cdots,b_L\}$ respectively are both finite, which also follows by the isometric embedding Theorem \ref{thm: isometric embedding}. The techniques also work for the free product of arbitrarily many finite groups, but we won't pursue it here. Potential generalizations to the infinite factor groups will be discussed in Section \ref{subsec: generalization}.
	
	We adopt the setup and notation in the previous section.
	Note that by Corollary \ref{cor: simple surface is enough}, to compute $\stl_G(g)$ for a given element $g$, it suffices to consider connected simple surfaces $S$ (Definition \ref{def: simple surf}) with $\chi(\Gamma_S)=1$, where $\Gamma_S$ is the gluing graph (Definition \ref{def: gluing graph}). 
	
	We will introduce two operations that further simplify surfaces: \emph{splitting} and \emph{rewiring}. 
	In terms of the gluing graph, we will use splitting to reduce the valence of vertices, and use rewiring to reduce the diameter of the graph.
	
	However, the family of connected simple surfaces $S$ with $\chi(\Gamma_S)=1$ is not closed under the two operations above. 
	As a remedy, we consider the larger family of simple surfaces with $\chi(\Gamma_\Sigma)\ge0$ for each component $\Sigma$, which is more convenient to work with
	for two reasons:
	\begin{itemize}
		\item The two operations (when applied appropriately) preserve this family, and
		\item The complexity of any connected simple surface with $\chi(\Gamma_S)=0$ can be approximated by a sequence of connected simple surfaces with $\chi(\Gamma_S)=1$, and thus can still be used to compute stl; see Lemma \ref{lemma: approximation}.
	\end{itemize}
	
	For a connected simple surface $S$ with $\chi(\Gamma_S)=0$, the gluing graph $\Gamma_S$ is connected and has a unique embedded loop, which we refer to as the \emph{core}.
	The gluing graph can be thought of as obtained from the core by attaching finitely many (rooted) trees to vertices on the core. We refer to each of such trees as \emph{a decorative tree}, and the vertex it attaches to as the \emph{root}. See Figure \ref{fig: chi0graph}.
	
	\begin{figure}
		\centering
		\labellist
		\small \hair 2pt
		\pinlabel $\text{core}$ at 125 10
		\pinlabel $T_1$ at 45 72
		\pinlabel $T_2$ at 165 72
		\pinlabel $T_3$ at 235 101
		\pinlabel $T_4$ at 228 44
		\endlabellist
		\includegraphics[scale=0.7]{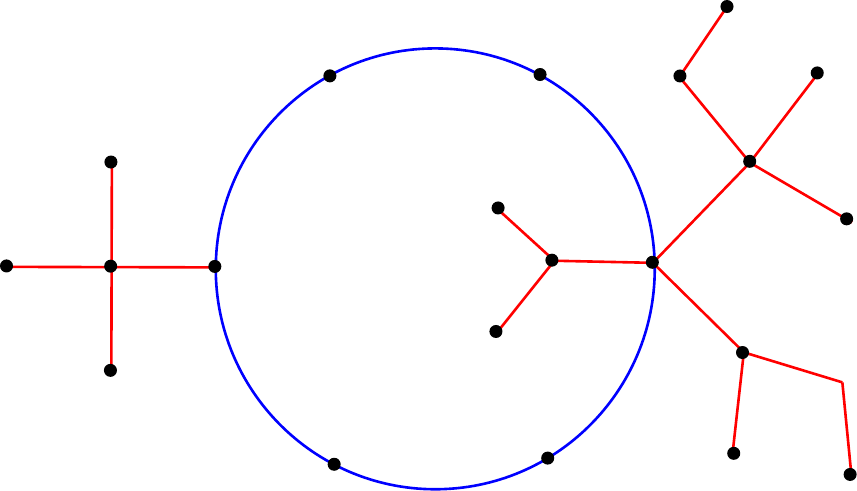}
		\caption{A connected graph with Euler characteristic zero that consists of a core (in blue) and four decorative trees (in red).}
		\label{fig: chi0graph}
	\end{figure}
	
	We first introduce the two operations, splitting and rewiring, in Sections \ref{subsec: splitting} and \ref{subsec: rewiring} respectively. 
	In particular we show the approximation Lemma \ref{lemma: approximation} in Section \ref{subsec: rewiring} using rewiring.
	Then in Section \ref{subsec: irreducible} we define irreducible simple surfaces and show that every connected simple surface $S$ with $\chi(\Gamma_S)\ge0$ decomposes into a union of irreducible ones after applying splitting and rewiring. This yields an algorithm to compute stl and shows that it is rational in free products of finite groups; see Theorem \ref{thm: rationality}.
	
	\subsection{Splitting of a piece}\label{subsec: splitting}
	The first operation that we introduce on simple surfaces is \emph{splitting} of a piece. 
	
	Let $C$ be a piece on the $A$-side with polygonal boundary $P$. Suppose a proper subset of the turns on $P$ can form a polygonal boundary $P_1$ with trivial winding class, and the remaining turns can form another polygonal boundary $P_2$. Then $P_2$ has the same winding class as $P$ if $A$ is abelian.
	
	In this case $P_1$ bounds a disk-piece $C_1$ and $P_2$ bounds a piece $C_2$ that has the same topological type as the original piece $C$. When $C$ is a piece on a simple surface $S$, \emph{splitting} is the operation that we replace the piece $C$ above by the two new pieces $C_1$ and $C_2$ without changing the gluing of turns. This modifies the simple surface without changing the number of holes while splitting one vertex of the gluing graph $\Gamma_S$ into two.
	
	\begin{example}\label{example: splitting}
	    Let $A=\Z/p$ be a cyclic group generated by $a$ and  $B$ be arbitrary. Let $g=ab\bar{a}\bar{b}\in A*B$, where $\bar{a}$ and $\bar{b}$ denote $a^{-1}$ and $b^{-1}$ respectively. 
	    Let $\gamma$ be a loop representing $g$, decomposed into arcs $\alpha_1,\beta_1,\alpha_2,\beta_2$ corresponding to $a,b, \bar{a},\bar{b}$ respectively. 
	    \begin{enumerate}
	        \item On the left of Figure \ref{fig: splitting}, we have disk-piece $C$ on the $A$-side with two copies of each of the four turns $(\alpha_1,\alpha_1)$, $(\alpha_1,\alpha_2)$, $(\alpha_2,\alpha_2)$, $(\alpha_2,\alpha_1)$. The two turns $(\alpha_1,\alpha_2)$ and $(\alpha_2,\alpha_1)$ form a disk-piece $C_1$ and the remaining six turns form another disk-piece $C_2$. The splitting breaks the piece $C$ into the pieces $C_1$ and $C_2$.\label{item: wild splitting}
	        \item There are two consecutive copies of the turn $(\alpha_1,\alpha_1)$ on $C_2$. If $p=2$, these two turns form a disk-piece $C_3$ since $a^2=1$, and the remaining turns form another disk-piece $C_4$, shown on the right of Figure \ref{fig: splitting}. Note that in this case, the turns on the new pieces sit in a cyclic order compatible to the their cyclic order on $C_2$, which is not the case for the previous splitting.\label{item: geom splitting}
	    \end{enumerate}
	    
	    \begin{figure}
		\centering
		\labellist
		\small \hair 2pt
		\pinlabel $C$ at 119 193
		\pinlabel $a$ at 89 205
		\pinlabel $a$ at 89 180
		\pinlabel $\bar{a}$ at 102 160
		\pinlabel $\bar{a}$ at 130 160
		\pinlabel $a$ at 149 205
		\pinlabel $a$ at 149 180
		\pinlabel $\bar{a}$ at 105 225
		\pinlabel $\bar{a}$ at 131 225
		\pinlabel $1$ at 119 238
		\pinlabel $2$ at 88 225
		\pinlabel $3$ at 75 193
		\pinlabel $4$ at 84 161
		\pinlabel $5$ at 117 148
		\pinlabel $6$ at 148 161
		\pinlabel $7$ at 165 192
		\pinlabel $8$ at 150 224
		
		\pinlabel $C_1$ at 55 45
		\pinlabel $a$ at 53 27
		\pinlabel $\bar{a}$ at 53 64
		\pinlabel $2$ at 5 45
		\pinlabel $4$ at 100 45
		
		\pinlabel $C_2$ at 195 50
		\pinlabel $a$ at 177 23
		\pinlabel $a$ at 162 50
		\pinlabel $a$ at 177 77
		\pinlabel $\bar{a}$ at 210 21
		\pinlabel $\bar{a}$ at 227 50
		\pinlabel $\bar{a}$ at 210 79
		\pinlabel $6$ at 193 95
		\pinlabel $3$ at 152 75
		\pinlabel $7$ at 152 25
		\pinlabel $8$ at 193 5
		\pinlabel $1$ at 237 25
		\pinlabel $5$ at 237 75
		
		\pinlabel $C_2$ at 405 193
		\pinlabel $a$ at 387 166
		\pinlabel $a$ at 372 193
		\pinlabel $a$ at 387 220
		\pinlabel $\bar{a}$ at 420 164
		\pinlabel $\bar{a}$ at 437 193
		\pinlabel $\bar{a}$ at 420 222
		\pinlabel $6$ at 403 238
		\pinlabel $3$ at 362 218
		\pinlabel $7$ at 362 169
		\pinlabel $8$ at 403 149
		\pinlabel $1$ at 447 169
		\pinlabel $5$ at 447 218
		
		\pinlabel $C_3$ at 343 45
		\pinlabel $a$ at 341 27
		\pinlabel $a$ at 341 64
		\pinlabel $3$ at 295 45
		\pinlabel $7$ at 390 45
		
		\pinlabel $C_4$ at 480 50
		\pinlabel $a$ at 445 50
		\pinlabel $\bar{a}$ at 478 15
		\pinlabel $\bar{a}$ at 513 50
		\pinlabel $\bar{a}$ at 478 85
		\pinlabel $5$ at 513 85
		\pinlabel $6$ at 443 85
		\pinlabel $8$ at 443 15
		\pinlabel $1$ at 513 15
		\endlabellist
		\includegraphics[scale=0.7]{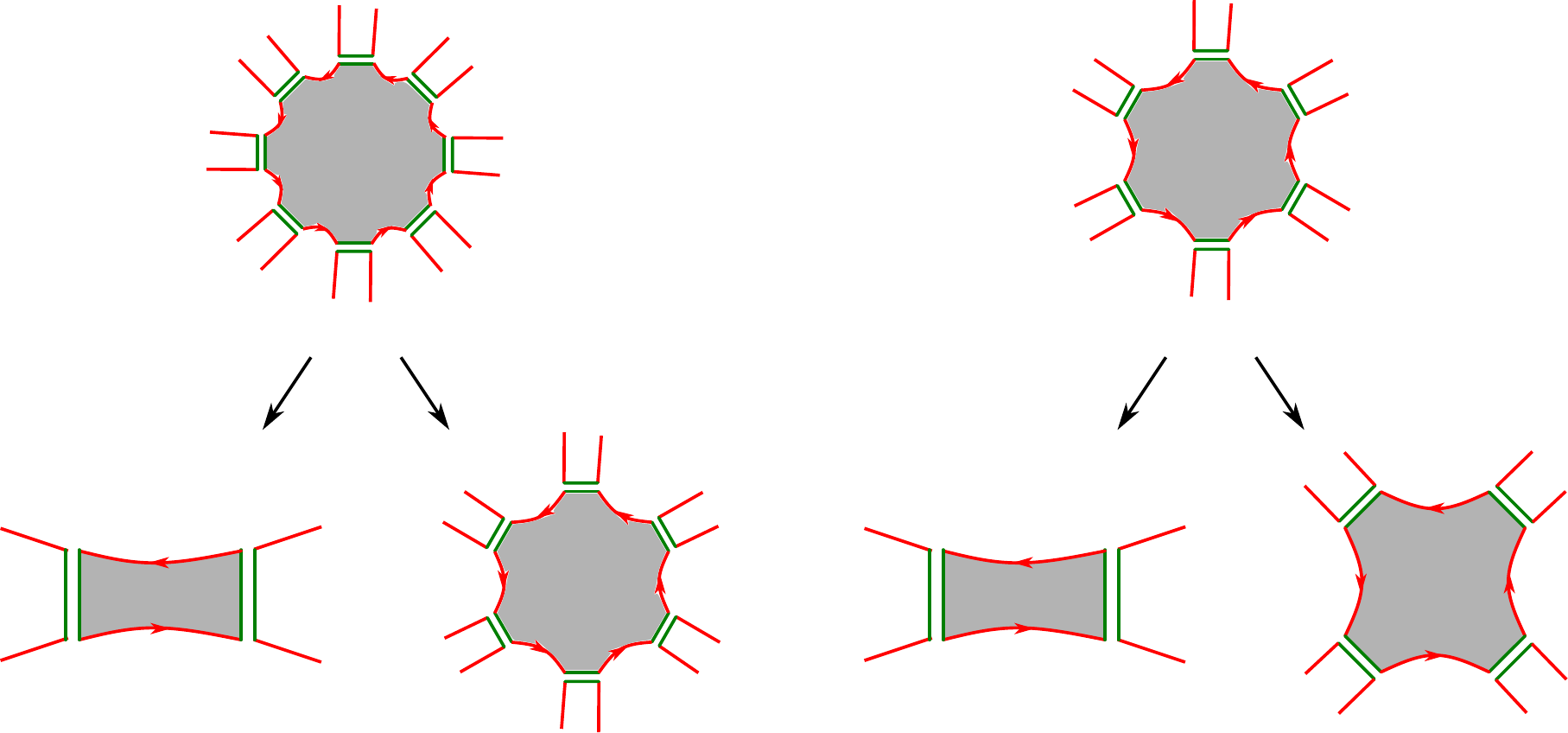}
		\caption{Two examples of splitting, where the one on the right assumes $a^2=1$.}
		\label{fig: splitting}
	\end{figure}
	\end{example}
	
	For the rest of this section, we focus on a special case similar to case (\ref{item: geom splitting}) in Example \ref{example: splitting}, where splitting works without assuming the factor groups to be abelian. Suppose there is a proper subsequence of sides
	$$(T_0,A_1,T_1,\cdots, A_k, T_k)$$
	for some $k\ge1$ on the polygonal boundary $P$ in the positive cyclic order starting and ending at turns $T_0,T_k$ of the \emph{same} type, such that
	the product of the elements represented by the arcs $A_1,\cdots,A_k$ is the identity in $A$.
	Let 
	$$(A_{k+1},T_{k+1},\cdots, A_{n})$$
	be the complementary sequence of sides in the positive cyclic order, where $n>k$.
	Then we obtain two polygonal boundaries $P_1$ and $P_2$, where the sides are
	$(A_1,T_1,\cdots, A_k, T_k)$ and $(A_{k+1},T_{k+1},\cdots, A_n, T_n=T_0)$, respectively.
	Then by the assumption, the winding class of $P_1$ is trivial and thus $P_1$ bounds a disk-piece $C_1$. The winding class of $P_2$ is the same as that of $P$ and thus $P_2$ bounds a piece $C_2$ that has the same topological type as $C$.
	
	Splitting decomposes such a piece $C$ into two pieces $C_1$ and $C_2$ without changing the total number of holes. In addition, it does not affect the gluing of compatible turns.
	Analogously one can perform this for pieces on the $B$-side.
	
	It is helpful to think about the effect of this operation conceptually in terms of the gluing graph $\Gamma_S$. 
	Orient edges so that they go from vertices representing pieces on the $A$-side to those on the $B$-side. Such edges fall into different \emph{types} according to the types of turns.
	Then splitting of a piece applies to a vertex which necessarily have two adjacent edges $e_1,e_2$ of the same type. It replaces such a vertex $v$ by two vertices $v_1,v_2$, where part of the original adjacent edges become edges at $v_1$ and the others are edges at $v_2$, so that $e_i$ is an edge at $v_i$.
	
	Note that $\Gamma_S$ is actually a \emph{fatgraph} in the sense that there is a cyclic order on the edges at each vertex, which is induced from the orientation on the polygonal boundary of each piece. Hence for this special type of splitting, the cyclic order at the vertex $v$ that we split and the position of $e_1,e_2$ in the order determine which edges of $v$ become edges of $v_1$ and $v_2$.
	
	We are able to apply splitting to any vertex with large valence in the gluing graph if the corresponding factor group is finite. 
	\begin{lemma}\label{lemma: splitting applicable}
		Suppose the factor group $A$ is finite. Then for the element $g=a_1b_1\cdots a_L b_L$, we can split any piece on the $A$-side that has more than $|A|\cdot L^2$ turns on its polygonal boundary.
	\end{lemma}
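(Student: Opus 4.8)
The plan is a straightforward pigeonhole argument on the turns of the polygonal boundary, recording at each turn two pieces of data: the type of that turn and the partial product of the arc-elements read so far.

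First I would fix notation. Write the polygonal boundary $P$ of the given $A$-side piece $C$ as a cyclic sequence of alternating arcs and turns $A_1,\tau_1,A_2,\tau_2,\dots,A_N,\tau_N$, where $\tau_m$ is the turn from $A_m$ to $A_{m+1}$ (indices mod $N$) and $N>|A|\cdot L^2$ is the number of turns on $P$. Each arc $A_m$ carries a label $\alpha_{\ell(m)}$ with $\ell(m)\in\{1,\dots,L\}$ and represents $a_{\ell(m)}\in A$; hence $\tau_m$ has type $(\alpha_{\ell(m)},\alpha_{\ell(m+1)})$, so there are at most $L^2$ possible turn types on the $A$-side. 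For $m=1,\dots,N$ set $g_m\defeq a_{\ell(1)}a_{\ell(2)}\cdots a_{\ell(m)}\in A$, the product of the arc-elements read up through $\tau_m$.

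Next, to each turn $\tau_m$ I would attach the pair $\bigl(\text{type}(\tau_m),\,g_m\bigr)$, which takes one of at most $|A|\cdot L^2$ values. Since $P$ has $N>|A|\cdot L^2$ turns, two of these pairs coincide, so there are indices $1\le s<t\le N$ with $\text{type}(\tau_s)=\text{type}(\tau_t)$ and $g_s=g_t$. Put $T_0\defeq\tau_s$ and relabel $A_i\defeq A_{s+i}$, $T_i\defeq\tau_{s+i}$ for $1\le i\le k\defeq t-s\ge1$ (so $T_k=\tau_t$). Then the arcs $A_1,\dots,A_k$ represent elements whose product is $g_s^{-1}g_t=\id$ in $A$, the turns $T_0$ and $T_k$ have the same type, and the complementary sequence of sides $A_{t+1},\dots,A_N,A_1,\dots,A_s$ is nonempty since $1\le k\le N-1$. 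Thus $(T_0,A_1,T_1,\dots,A_k,T_k)$ is a proper subsequence of sides of exactly the form required by the splitting construction described just before this lemma, so that construction applies to $C$: it yields the disk-piece bounded by $P_1$ together with the piece bounded by $P_2$ (of the same topological type as $C$), without altering the pairing of turns.

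I do not expect a genuine obstacle here; the only point demanding care is the bookkeeping that justifies the two conditions. The equality $\text{type}(\tau_s)=\text{type}(\tau_t)$, i.e.\ $\ell(s)=\ell(t)$ and $\ell(s+1)=\ell(t+1)$, is precisely what makes $P_1$ and $P_2$ well-formed polygonal boundaries: the turn closing up $P_1$ is read off $\tau_t$, the turn closing up $P_2$ is read off $\tau_s$, and the label matches make these legitimate turns. The equality $g_s=g_t$ is precisely what forces the winding class of $P_1$ to be trivial, so that $P_1$ bounds a disk-piece. (One could also note $k\ge2$ automatically, since no arc represents the identity of $A$, but this is not needed.)
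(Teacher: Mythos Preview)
Your proof is correct and follows essentially the same approach as the paper's: a pigeonhole argument on turn types together with partial products of the arc-elements. The only cosmetic difference is that the paper applies pigeonhole in two stages (first finding $|A|+1$ turns of a common type, then finding two coinciding partial products among the resulting segments), whereas you apply it once to the pair $(\text{type}(\tau_m),g_m)$; the content is the same.
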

	\begin{proof}
		Note that there are $L^2$ possible types of turns on the $A$-side.
		Suppose there are more than $|A|\cdot L^2$ turns, then by the pigeonhole principle there exist $|A|+1$ turns of the same type. These turns cut the polygonal boundary into $|A|+1$ segments.
		Each segment represents an element in $A$ by taking the product of elements corresponding to the arcs on the segment. Let $x_1,\cdots,x_{|A|+1}\in A$ be the elements corresponding to these segments. Then by the pigeonhole principle, there exist $1\le m<n\le |A|+1$ such that $x^{(m)}=x^{(n)}$, where $x^{(i)}=x_1\cdots x_i$. This implies $x_{m+1}\cdots x_n=id$ and hence we can apply splitting to this piece.
	\end{proof}
	
	\subsection{Rewiring}\label{subsec: rewiring}
	The second operation that we introduce on simple surfaces is \emph{rewiring}. This has been used in a similar setting to understand stable commutator length \cite{Chen:sclBS}. However, in this setting, it is necessary to apply this operation more carefully here to control the Euler characteristic of each component of the gluing graph. We describe this below. 
	
	Suppose that there exist two edges $e_1,e_2$ in the gluing graph of the same type and that each $e_i$ goes from a vertex $u_i$ to a vertex $v_i$, where $i=1,2$. Geometrically, thinking of vertices as pieces, this means that $u_1,v_1$ are glued together along compatible turns, which have the same type as the compatible turns along which we glue $u_2$ and $v_2$.
	Then, we can cut along these two pairs of turns and glue $u_1$ to $v_2$ and glue $u_2$ to $v_1$ instead.
	In terms of the gluing graph, we remove the edges $e_1$ and $e_2$ and construct two new edges connecting $u_1$ to $v_2$ and $u_2$ to $v_1$ instead; see Figure \ref{fig: rewire}.
	
	\begin{figure}
		\centering
		\labellist
		\small \hair 2pt
		\pinlabel $u_1$ at 16 168
		\pinlabel $v_1$ at 115 168
		\pinlabel $u_2$ at -3 95
		\pinlabel $v_2$ at 115 95
		\pinlabel $u_1$ at 32 40
		\pinlabel $e_1$ at 62 40
		\pinlabel $v_1$ at 92 40
		\pinlabel $u_2$ at 32 9
		\pinlabel $e_2$ at 62 9
		\pinlabel $v_2$ at 92 9
		
		\pinlabel $\text{rewiring}$ at 160 142
		
		\pinlabel $u_1$ at 225 168
		\pinlabel $v_2$ at 325 168
		\pinlabel $u_2$ at 207 95
		\pinlabel $v_1$ at 325 95
		\pinlabel $u_1$ at 242 40
		\pinlabel $v_1$ at 302 40
		\pinlabel $u_2$ at 242 9
		\pinlabel $v_2$ at 302 9
		\endlabellist
		\includegraphics[scale=0.7]{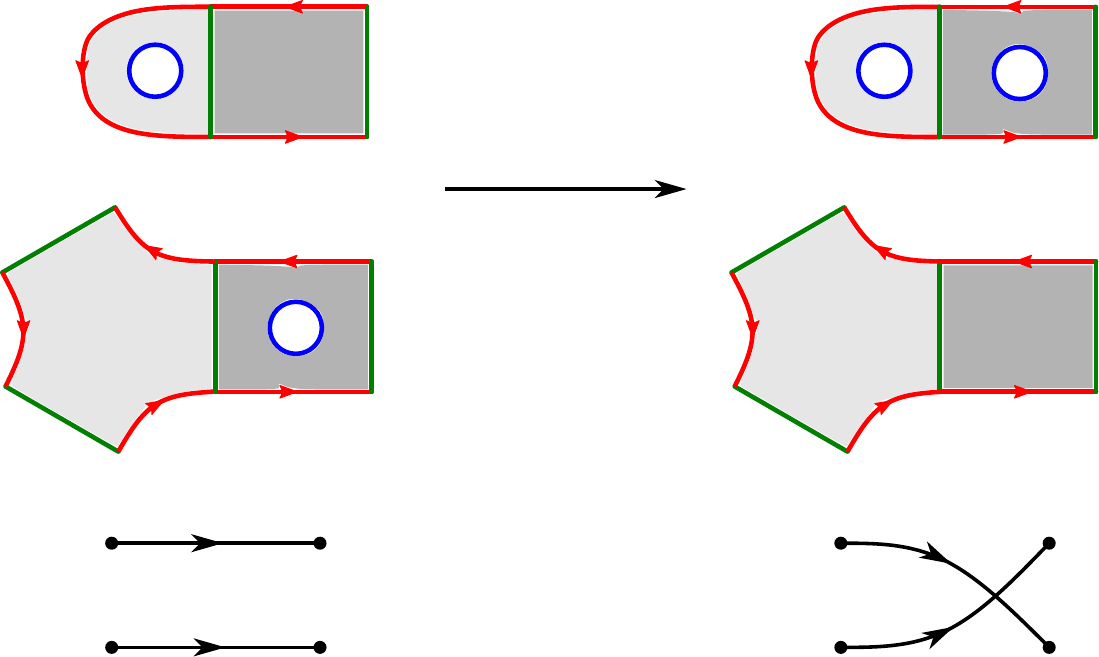}
		\caption{The effect of rewiring on the simple surface and gluing graph}
		\label{fig: rewire}
	\end{figure}
	
	Note that rewiring does not change the types of pieces and preserves the number of vertices and edges of the gluing graph. Thus applying the rewiring operation to a simple surface $S$ results in another simple surface $S'$ with the same Euler characteristic. However, it might change the number of components and the Euler characteristic of individual connected components. For this reason, we will apply rewiring in a restricted way to preserve the family of simple surfaces with $\chi(\Gamma_\Sigma)\ge0$ for each connected component $\Sigma$. 
	
	If $e_1,e_2$ lie in different components of $\Gamma_S$ and at least one of them is non-separating, then the rewiring merges the two components into a single component. This shows that the complexity of any connected simple surface with $\chi(\Gamma_S)=0$ can be approximated by those with $\chi(\Gamma_S)=1$. Before proving this in Lemma \ref{lemma: approximation}, we need the following simple observation.
	
	\begin{lemma}\label{lemma: ample}
		If $A$ and $B$ are finite, then for the given element $g=a_1b_1\cdots a_Lb_L$ and for any turn type $(\alpha_i,\alpha_j)$, there is a connected simple surface $S$ with $\chi(\Gamma_S)=1$ that contains a turn of the given type $(\alpha_i,\alpha_j)$.
	\end{lemma}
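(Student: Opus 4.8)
The plan is to exhibit, directly and explicitly, a connected simple surface $S$ of degree one whose gluing graph is a tree and one of whose pieces carries a turn of the prescribed type $(\alpha_i,\alpha_j)$. Since $A$ and $B$ are finite, every element of $A$ and of $B$ is torsion, so forming the required pieces costs nothing: any polygonal boundary on the $A$-side (resp.\ $B$-side) has winding class in $A=A_{tor}$ (resp.\ $B=B_{tor}$) and hence bounds a disk- or annulus-piece. In fact the surface produced will be torsion-admissible by Lemma~\ref{lemma: stl and simple surfaces}, but only the weaker assertion that $S$ is a simple surface is needed here.

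First, when $i\ne j$ (which forces $L\ge2$), I would take the $A$-piece $C_0$ whose polygonal boundary reads cyclically: arc $\alpha_i$, a turn of type $(\alpha_i,\alpha_j)$, arc $\alpha_j$, a turn of type $(\alpha_j,\alpha_i)$; its winding class lies in $A$. Since $(\alpha_i,\alpha_j)$ is compatible with a turn of type $(\beta_{j-1},\beta_i)$ and $(\alpha_j,\alpha_i)$ with one of type $(\beta_{i-1},\beta_j)$, I would glue to $C_0$ the $B$-piece $C_1$ running cyclically through the arcs $\beta_i,\beta_{i+1},\dots,\beta_{j-1}$ (indices mod $L$) and closed up by the turn $(\beta_{j-1},\beta_i)$, together with the $B$-piece $C_2$ running through the complementary arcs $\beta_j,\beta_{j+1},\dots,\beta_{i-1}$ and closed up by $(\beta_{i-1},\beta_j)$. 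The remaining turns on $C_1$ and $C_2$ all have the form $(\beta_{m-1},\beta_m)$, which is compatible with $(\alpha_m,\alpha_m)$, so for each index $m\notin\{i,j\}$ I would attach a one-arc $A$-piece $D_m$ (arc $\alpha_m$, turn $(\alpha_m,\alpha_m)$, winding class $a_m$) along the corresponding turn. In the degenerate case $i=j$, which includes every case with $L=1$, I would instead let $C_0$ be the one-arc $A$-piece with arc $\alpha_i$ and turn $(\alpha_i,\alpha_i)$, let $C_1$ be the $B$-piece running once through all of $\beta_i,\beta_{i+1},\dots,\beta_{i-1}$, and attach pieces $D_m$ for every $m\ne i$ exactly as before.

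After that, I would verify that this data genuinely defines a simple surface with the stated properties. The routine checks are: every pairing of turns listed above is between compatible turns, which is immediate from the definition once indices are read mod $L$; each turn of each piece is paired exactly once; the gluing graph $\Gamma_S$ has $L+1$ vertices and $L$ edges; and $\Gamma_S$ is connected, since each $D_m$ attaches to $C_1$ or $C_2$ and both attach to $C_0$. Connectedness together with $\chi(\Gamma_S)=(L+1)-L=1$ shows $\Gamma_S$ is a tree, so $S$ is a connected simple surface, and it contains the turn $(\alpha_i,\alpha_j)$ on $C_0$ by construction (one also checks $n(S)=1$, since $\alpha_1$ occurs just once among all the pieces, although this is not needed). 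I expect the only delicate point to be exactly this bookkeeping — matching the turn types of $C_1$ and $C_2$ with those of $C_0$ and of the $D_m$, and confirming that the pieces assemble into a single connected surface rather than several components — but there is no genuine difficulty, precisely because the factor groups are finite.
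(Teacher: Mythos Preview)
Your proof is correct and takes a genuinely different route from the paper's.

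The paper builds a \emph{linear strip}: starting from the two-arc piece $P^A_{i,j}$ (your $C_0$), it alternately glues on two-arc $B$- and $A$-pieces $P^B_{i,j-1},P^A_{i+1,j-1},P^B_{i+1,j-2},\ldots$ on one side and $P^B_{i-1,j},P^A_{i-1,j+1},\ldots$ on the other, extending until the two subindices of some piece coincide mod $L$, at which point it caps off with a one-arc piece. The resulting gluing graph is a path. Your construction is instead \emph{star-shaped}: the central piece $C_0$ is glued to one or two $B$-pieces carrying many arcs at once, and these in turn are capped by one-arc $A$-leaves $D_m$. Your surface has degree exactly $1$ and $L+1$ pieces, whereas the paper's strip can have degree and length up to roughly $2L$ depending on $i,j$. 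In exchange, the paper uses only pieces with at most two arcs, so its construction would survive in settings where one wants a uniform bound on piece size; your construction uses pieces with up to $L$ arcs but is more economical here. Either way the finiteness of $A$ and $B$ is only used to guarantee that every polygonal boundary has torsion winding class.
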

	\begin{proof}
		Since $A$ is finite, there is a piece $P^A_{i,j}$ with exactly two turns $(\alpha_i,\alpha_j)$ and $(\alpha_j,\alpha_i)$. Similarly there is a piece $P^B_{i,j}$ with exactly two turns $(\beta_i,\beta_j)$ and $(\beta_j,\beta_i)$.
		So it suffices to put each $P^A_{i,j}$ into a simple surface with the desired properties. 
		Note that there is an arbitrarily long strip of pieces glued together centered at $P^A_{i,j}$, such that on one side we have $P^B_{i,j-1}$, $P^A_{i+1,j-1}$, $P^B_{i+1,j-2}$ and so on, and on the other side we have $P^B_{i-1,j}$, $P^A_{i-1,j+1}$, $P^B_{i-2,j+1}$ and so on; see the top of Figure \ref{fig: forge} for an example with $i=2$ and $j=L=4$. Here the indices are taken mod $L$. 
		
		\begin{figure}
			\centering
			\labellist
			\small \hair 2pt
			\pinlabel $a_1$ at 55 142
			\pinlabel $b_4$ at 100 142
			\pinlabel $a_4$ at 145 142
			\pinlabel $b_3$ at 190 142
			\pinlabel $a_3$ at 235 142
			\pinlabel $b_2$ at 280 142
			\pinlabel $a_2$ at 323 142
			\pinlabel $b_1$ at 370 142
			\pinlabel $a_1$ at 413 142
			
			\pinlabel $a_1$ at 55 89
			\pinlabel $b_1$ at 100 89
			\pinlabel $a_2$ at 145 89
			\pinlabel $b_2$ at 190 89
			\pinlabel $a_3$ at 235 89
			\pinlabel $b_3$ at 280 89
			\pinlabel $a_4$ at 323 89
			\pinlabel $b_4$ at 370 89
			\pinlabel $a_1$ at 413 89
			
			\pinlabel $P^A_{1,1}$ at 30 80
			\pinlabel $P^B_{1,4}$ at 75 80
			\pinlabel $P^A_{2,4}$ at 120 80
			\pinlabel $P^B_{2,3}$ at 165 80
			\pinlabel $P^A_{3,3}$ at 210 80
			\pinlabel $P^B_{3,2}$ at 255 80
			\pinlabel $P^A_{4,2}$ at 298 80
			\pinlabel $P^B_{4,1}$ at 345 80
			\pinlabel $P^A_{1,1}$ at 388 80
			
			\pinlabel $b_4$ at 100 53
			\pinlabel $a_4$ at 145 53
			\pinlabel $b_3$ at 190 53
			
			\pinlabel $a_1$ at 30 27
			\pinlabel $b_1$ at 100 1
			\pinlabel $a_2$ at 145 1
			\pinlabel $b_2$ at 190 1
			\pinlabel $a_3$ at 255 27
			
			\pinlabel $P^B_{1,4}$ at 75 -9
			\pinlabel $P^A_{2,4}$ at 120 -9
			\pinlabel $P^B_{2,3}$ at 165 -9
			\endlabellist
			\includegraphics[scale=0.7]{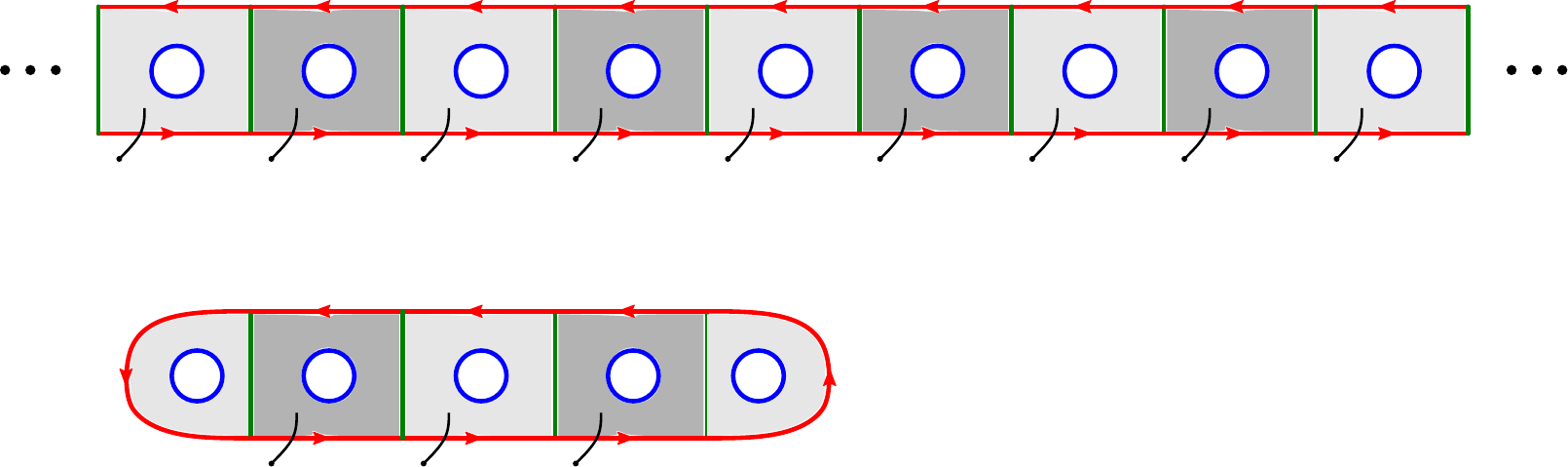}
			\vspace{5pt}
			\caption{An example to include the turn type $(\alpha_2,\alpha_4)$ in a long strip, which can be modified into a simple surface $S$ for $g=a_1b_1\cdots a_4b_4$ with $\chi(\Gamma_S)=1$.}
			\label{fig: forge}
		\end{figure}
		
		For each piece $P^*_{k,\ell}$ in the above sequence, we take the difference $k-\ell$ of the two subindices. 
		Then we observe that the differences between consecutive pieces are consecutive integers and form a monotone sequence.
		Thus on both sides of $P^A_{i,j}$, we can find pieces of the form $P^*_{m,n}$ with $m\equiv n\mod L$ and $*=A$ or $B$. 
		We can cut the strip at such a piece and replace this piece by the piece with only one arc $\alpha_m$ (resp. $\beta_m$) if $*=A$ (resp. $*=B$); 
		see the bottom of Figure \ref{fig: forge}. This constructs a connected simple surface $S$ containing $P^A_{i,j}$ such that $\Gamma_S$ is a tree.
	\end{proof}
	
	\begin{lemma}\label{lemma: approximation}
		Suppose that for the given element $g$ and any turn type $(\alpha_i,\alpha_j)$, there is a connected simple surface $S$ with $\chi(\Gamma_S)=1$.
		Then, for any connected simple surface $S$ of degree $n$ with $\chi(\Gamma_S)=0$, there exists a sequence of connected simple surfaces $S_k$ of degree $n_k$ with $\chi(\Gamma_{S_k})=1$ such that 
		$$\frac{-\chi(S)}{n}=\lim_k \frac{-\chi(S_k)}{n_k}.$$
	\end{lemma}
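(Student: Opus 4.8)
The plan is to ``dilute'' the gluing‑graph defect of $S$ by taking many disjoint copies of it together with a single auxiliary surface whose gluing graph has Euler characteristic $1$, and then to splice the whole collection into one connected surface by a carefully chosen sequence of rewirings.

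First I would record the relevant structure of $\Gamma_S$. Since $\Gamma_S$ is connected with $\chi(\Gamma_S)=0$, it is unicyclic: its core is an honest embedded loop, and \emph{every} edge $e$ lying on the core is non‑separating in $\Gamma_S$, because deleting it yields a connected graph (a tree) with $\chi=1$. Fix such a core edge $e$; the pair of compatible turns it records has an $A$‑side turn of some type $(\alpha_i,\alpha_j)$. By hypothesis there is a connected simple surface $T$ with $\chi(\Gamma_T)=1$ containing a turn of type $(\alpha_i,\alpha_j)$, hence an edge $e'$ of $\Gamma_T$ of that type; set $m=n(T)$. For each $k\ge1$, let $\Sigma_k$ be the (disconnected) simple surface made of $k$ disjoint copies of $S$ and one copy of $T$. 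Euler characteristic and degree are additive over disjoint unions, so $\chi(\Gamma_{\Sigma_k})=k\cdot0+1=1$, $\chi(\Sigma_k)=k\,\chi(S)+\chi(T)$, and $n(\Sigma_k)=kn+m$.

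Next I would connect $\Sigma_k$ up by $k$ successive rewirings, absorbing one fresh copy of $S$ into the component containing $T$ at each step. Suppose copies $1,\dots,r-1$ of $S$ and the copy of $T$ have already been amalgamated into a single component $C$. Since rewiring preserves the number of vertices and edges of the gluing graph (so $\chi(\Gamma_C)=1$), and since each rewiring of two edges of type $(\alpha_i,\alpha_j)$ creates two new edges of that same type, $C$ still contains an edge of type $(\alpha_i,\alpha_j)$. Now rewire the core edge $e$ of the $r$‑th copy of $S$ against such an edge in $C$: these two edges lie in different components, and $e$ is non‑separating, so by the discussion of rewiring above this merges the two components into one, of gluing‑graph Euler characteristic $1+0=1$. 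After $k$ steps one obtains a connected simple surface $S_k$ with $\chi(\Gamma_{S_k})=1$. Because rewiring leaves the multiset of pieces unchanged, it preserves both the degree and (by Lemma~\ref{lemma: chi formula via graph}) the Euler characteristic of the surface, so $n(S_k)=kn+m$ and $\chi(S_k)=k\,\chi(S)+\chi(T)$. Hence
$$\frac{-\chi(S_k)}{n(S_k)}=\frac{k\bigl(-\chi(S)\bigr)+\bigl(-\chi(T)\bigr)}{kn+m}\xrightarrow[k\to\infty]{}\frac{-\chi(S)}{n},$$
using $n\ge1$, which is exactly the assertion.

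The delicate point — and the reason the core edge must be used — is preservation of connectedness: rewiring two edges lying in different components can, if both are bridges, \emph{split} the union into two pieces rather than merge it. Choosing a non‑separating edge (available on the core of each incoming copy of $S$) on one side of each rewiring rules this out and guarantees that a single component emerges at every stage. Everything else is bookkeeping with the additivity of $\chi$ and $n$ and the invariance of pieces under rewiring.
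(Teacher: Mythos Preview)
Your proof is correct. The paper's argument achieves the same dilution differently: rather than taking $k$ disjoint copies of $S$ and merging them one at a time via $k$ rewirings, it passes to the connected degree~$k$ cover $\Sigma_k$ of $S$ (which exists and is unique because $\Gamma_S$ is unicyclic), and then performs a \emph{single} rewiring between a lifted core edge of $\Sigma_k$ and the auxiliary tree surface. Both constructions produce a connected simple surface whose gluing graph is a tree, with the same multiset of pieces, degree $kn+m$, and Euler characteristic $k\,\chi(S)+\chi(T)$, so the limit computation is identical. Your route avoids the covering-space step at the cost of an inductive sequence of rewirings and the (easy but necessary) check that an edge of the required type persists in $C$ at each stage; the paper's route is a touch slicker but relies on the observation that the cover of the gluing graph lifts to a simple surface with the same piece types.
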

	\begin{proof}
		Note that for each $k\in\Z_+$ there is a unique degree $k$ cover $\Gamma_{k}$ of $\Gamma_S$. This induces a degree $k$ cover $\Sigma_k$ of $S$, which is again a connected simple surface (of degree $kn$) with gluing graph $\Gamma_k$.
		Now pick any edge $e$ on the core of $\Gamma_S$ corresponding to a turn, say, of type $(\alpha_i,\alpha_j)$.
		By assumption, we can fix a simple surface $S_0$ containing a turn of type $(\alpha_i,\alpha_j)$ such that $\Gamma_{S_0}$ is a tree. Then a lift of $e$ to $\Gamma_k$ provides a non-separating edge on $\Sigma_k$ corresponding to a turn of type $(\alpha_i,\alpha_j)$. Thus we can apply rewiring to $\Sigma_k$ and $S_0$ to obtain a new simple surface $S_k$ where the gluing graph $\Gamma_{S_k}$ is connected. Then $$\chi(\Gamma_{S_k})=\chi(\Gamma_k)+\chi(S_0)=0+1=1.$$
		The degree of $S_k$ is $n_k=kn+n_0$, where $n_0$ is the degree of $S_0$.
		Then, we have
		$$\lim_{k\to\infty} \frac{-\chi(S_k)}{n_k}=\lim_{k\to\infty}\frac{-\chi(\Sigma_k)-\chi(S_0)}{kn+n_0}=\lim_{k\to \infty}\frac{-k\chi(S)-\chi(S_0)}{kn+n_0}= \frac{-\chi(S)}{n}$$
		by construction.
	\end{proof}
	
	\begin{corollary}\label{cor: simple with chi01}
		If $A$ and $B$ are finite, then 
		$$\stl_G(g)=\inf_S \frac{-\chi(S)}{n},$$
		where the infimum is taken over all connected simple surfaces with $\chi(\Gamma_S)\in\{0,1\}$.
	\end{corollary}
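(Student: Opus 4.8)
The plan is to deduce this directly from the three results just established: Corollary~\ref{cor: simple surface is enough}, Lemma~\ref{lemma: ample}, and the approximation Lemma~\ref{lemma: approximation}. Since $A$ and $B$ are finite, we are in the standing setting where $g=a_1b_1\cdots a_Lb_L$ is reduced with $L\ge1$; the case where $g$ is conjugate into a factor is trivial, as then $g$ is torsion and $\stl_G(g)=0$. First I would apply Corollary~\ref{cor: simple surface is enough} with the full collection $\mathcal{P}=\tcP$, which is sufficient by Lemma~\ref{lemma: stl and simple surfaces}. Because $A$ and $B$ are finite groups, $A_{tor}=A$ and $B_{tor}=B$ are torsion groups, so hypothesis~(1) of that corollary applies and yields
$$\stl_G(g)=\inf_S\frac{-\chi(S)}{n(S)},$$
the infimum being over all connected simple surfaces $S$ with $\chi(\Gamma_S)=1$. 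Since every such $S$ also satisfies $\chi(\Gamma_S)\in\{0,1\}$, enlarging the family of admissible surfaces can only decrease the infimum, which gives the inequality ``$\ge$'' in the asserted identity.

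For the reverse inequality it suffices to show $\tfrac{-\chi(S)}{n(S)}\ge\stl_G(g)$ for every connected simple surface $S$ with $\chi(\Gamma_S)\in\{0,1\}$. When $\chi(\Gamma_S)=1$ this is immediate from the displayed formula above. When $\chi(\Gamma_S)=0$, I would invoke Lemma~\ref{lemma: ample}: because $A$ and $B$ are finite, for every turn type there is a connected simple surface with $\chi(\Gamma)=1$ containing a turn of that type, so the hypothesis of Lemma~\ref{lemma: approximation} is satisfied. That lemma then produces connected simple surfaces $S_k$ with $\chi(\Gamma_{S_k})=1$ and $\tfrac{-\chi(S)}{n(S)}=\lim_k\tfrac{-\chi(S_k)}{n(S_k)}\ge\stl_G(g)$. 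Taking the infimum over all connected simple surfaces with $\chi(\Gamma_S)\in\{0,1\}$ then gives ``$\le$'', and combining the two inequalities proves the corollary.

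There is no substantial obstacle here, since splitting and rewiring were engineered precisely to make this work; the only care needed is bookkeeping — confirming that the reduced-word assumption is in force so that turn types and simple surfaces behave as described, and checking that the hypothesis invoked in Lemma~\ref{lemma: approximation} is literally the conclusion delivered by Lemma~\ref{lemma: ample} (a connected simple surface with $\chi(\Gamma)=1$ containing a turn of the prescribed type).
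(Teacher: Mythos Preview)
Your proposal is correct and follows essentially the same route as the paper: invoke Corollary~\ref{cor: simple surface is enough} to identify $\stl_G(g)$ with the infimum over connected simple surfaces with $\chi(\Gamma_S)=1$, then use Lemma~\ref{lemma: ample} to feed the hypothesis of Lemma~\ref{lemma: approximation} and thereby absorb the $\chi(\Gamma_S)=0$ surfaces into that infimum. Your write-up is simply more explicit about the two inequalities and the hypothesis-checking; the only quibble is that your closing remark about ``splitting and rewiring'' is slightly off, since only rewiring (via Lemma~\ref{lemma: approximation}) is used here --- splitting enters later in Section~\ref{subsec: irreducible}.
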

	\begin{proof}
		By Lemmas \ref{lemma: ample} and \ref{lemma: approximation}, the complexity of a connected surface with $\chi(\Gamma_S)=0$ can be approximated by those with gluing graph being a tree. Thus the infimum remains the same if we restrict the class of surfaces to connected simple surfaces with $\chi(\Gamma_S)=1$. Then the result follows from Corollary \ref{cor: simple surface is enough}.
	\end{proof}
	
	\begin{remark}
	    The equality still holds if we consider simple surfaces where each component $\Sigma$ has $\chi(\Gamma_\Sigma)\ge0$. Note that this is different from Lemma \ref{lemma: linprog estimate}, which restricts the Euler characteristic of the gluing graph overall instead of component-wise.
	\end{remark}

	For what follows, we will only apply rewiring to two edges in the same component of the gluing graph, particularly in the following three scenarios.
	
	The first scenario is when we have a simple surface $S$ whose gluing graph $\Gamma_S$ is a tree such that there is an embedded oriented path $P$ that starts and ends with two distinct edges of the same type and orientation. 
	Then applying rewiring to these two edges decomposes the simple surface into the union of two connected simple surfaces $S_1$ and $S_2$, where the gluing graph of $S_2$ is still a tree, the gluing graph of $S_1$ has $\chi(\Gamma_{S_1})=0$ and the path $P$ becomes of the core of $\Gamma_{S_1}$. See the left of Figure \ref{fig: rewiretypes}. For later reference, we refer to this as \emph{rewiring of type \RNum{1}}.
	
	The second scenario is when we have a connected simple surface $S$ with $\chi(\Gamma_S)=0$ such that on a decorative tree $T$ there is an embedded oriented path $P$ that starts and ends with two distinct edges $e_1,e_2$ of the same type and orientation, and the unique path connecting $e_2$ to the root of $T$ contains $P$. Then applying rewiring to these two edges decomposes the simple surface into the union of two connected simple surfaces $S_1$ and $S_2$, where $\chi(\Gamma_{S_1})=\chi(\Gamma_{S_2})=0$, the core of $\Gamma_{S_1}$ is inherited from $\Gamma_S$, and the core of $\Gamma_{S_2}$ comes from the path $P$. See the middle of Figure \ref{fig: rewiretypes}. We refer to this as \emph{rewiring of type \RNum{2}}.
	
	The last scenario is when we have a connected simple surface $S$ with $\chi(\Gamma_S)=0$ such that, for a fixed orientation of the core of $\Gamma_S$ as a circle, there are two oriented edges on the core of the same type and orientation. 
	Then applying rewiring to these two edges decomposes the core into two disjoint circles, and accordingly
	breaks the simple surface into the union of two connected simple surfaces $S_1,S_2$ with $\chi(\Gamma_{S_1})=\chi(\Gamma_{S_2})=0$ such that their cores are the two circles above. See the right of Figure \ref{fig: rewiretypes}. We refer to this as \emph{rewiring of type \RNum{3}}.
	
	\begin{figure}
		\centering
		\labellist
		\small \hair 2pt
		\pinlabel $\text{type \RNum{1}}$ at 90 120
		\pinlabel $\text{type \RNum{2}}$ at 290 120
		\pinlabel $\text{type \RNum{3}}$ at 510 120
		\endlabellist
		\includegraphics[scale=0.65]{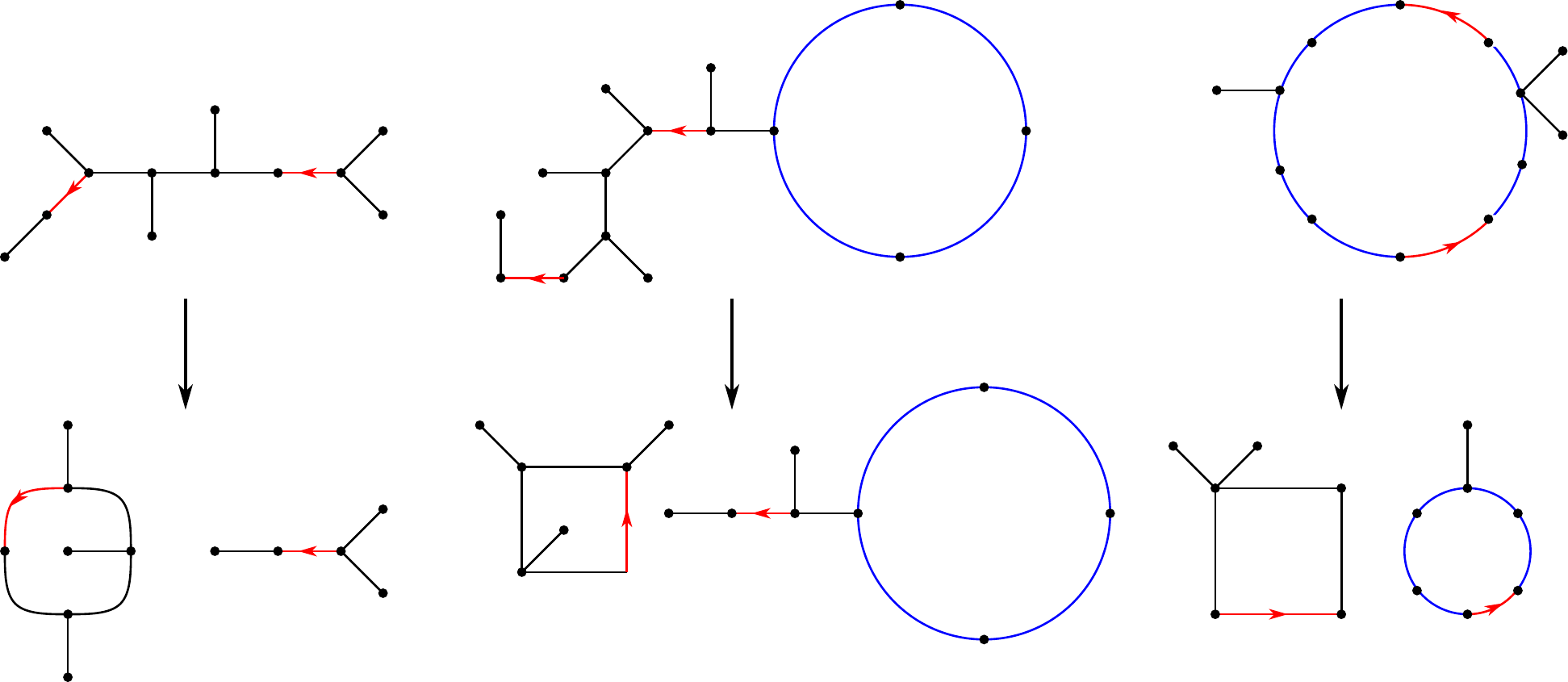}
		\caption{In terms of gluing graphs, this shows the effect of the three types of rewiring applied to the two red edges (marked with arrows) in each graph on top that have the same type and orientation.}
		\label{fig: rewiretypes}
	\end{figure}
	
	\subsection{Irreducible simple surfaces}\label{subsec: irreducible}
	Now we introduce irreducible simple surfaces and show how each connected simple surface $S$ with $\chi(\Gamma_S)\ge0$ decomposes into a disjoint union of irreducible ones by a sequence of splitting and rewiring.
	
	\begin{definition}\label{def: irreducible}
		A connected simple surface $S$ is \emph{irreducible} if $\chi(\Gamma_S)\ge0$, no splitting can be applied to any piece of $S$, and no rewiring of type \RNum{1}, \RNum{2} or \RNum{3} can be applied.
	\end{definition}
	
	\begin{proposition}\label{prop: reduce}
		For any simple surface $S$ with finitely many components such that each component $\Sigma$ has $\chi(\Gamma_\Sigma)\ge0$, there is a sequence of splittings and rewiring of types \RNum{1}, \RNum{2}, or \RNum{3} that modifies $S$ into a disjoint union $S'$ of irreducible simple surfaces. Moreover, 
		there is a component $\Sigma'$ of $S'$ satisfying
		$$\frac{-\chi(\Sigma')}{n(\Sigma')}\le\frac{-\chi(S)}{n(S)},$$
		where $n(\Sigma')$ and $n(S)$ are the degrees of $\Sigma'$ and $S$ respectively.
	\end{proposition}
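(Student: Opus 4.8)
The plan is a two-phase reduction: first exhaust all available splittings, then exhaust all available rewirings of types~\RNum{1}, \RNum{2}, \RNum{3}, checking at every step that we remain inside the class of simple surfaces all of whose components $\Sigma$ satisfy $\chi(\Gamma_\Sigma)\ge0$, and bookkeeping how $-\chi$ and the degree change.

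First I would record the effect of each operation. Splitting a piece leaves the total number of arcs among all pieces unchanged, leaves the edge set of the gluing graph unchanged, and increases the number of disk-pieces by one, so by Lemma~\ref{lemma: chi formula via graph} it drops the total $-\chi$ by exactly $1$ while fixing the degree $n$, which equals the total number of arcs labelled $\alpha_1$; moreover it replaces one vertex of the gluing graph by two, so the relevant component either stays connected with $\chi(\Gamma)$ increased by $1$, or breaks into two whose gluing-graph Euler characteristics sum to $\chi(\Gamma_\Sigma)+1\ge1$ and hence are each $\ge0$, since a connected graph has Euler characteristic $\le1$. A rewiring of type~\RNum{1}, \RNum{2} or \RNum{3} does not change the pieces, hence preserves the total $-\chi$ and the total degree, splits the component into exactly two, and by the definitions of the three types produces new gluing graphs with $\chi\in\{0,1\}$. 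In every case we stay in the class.

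Next, termination. Every piece has at least one arc and splitting conserves the total arc count, so the number of pieces is bounded above by a constant; since each splitting strictly increases the number of pieces, Phase~1 terminates, and afterward no piece of any component admits a splitting. Since rewiring leaves the pieces untouched, no splitting is ever available again, so Phase~2 uses only rewirings of types~\RNum{1}, \RNum{2}, \RNum{3}; each of these strictly increases the number of components while keeping the total number of vertices of the gluing graphs fixed at its post-Phase-1 value, and the number of components cannot exceed that total, so Phase~2 also terminates. At that point every component has $\chi(\Gamma)\ge0$, admits no splitting, and admits no rewiring of type~\RNum{1}, \RNum{2} or \RNum{3}, i.e.\ it is irreducible; thus the resulting surface $S'$ is a disjoint union of irreducible simple surfaces.

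Finally, for the inequality I would track $\mu=\min_j \frac{-\chi(\Sigma_j)}{n(\Sigma_j)}$ over the components $\Sigma_j$ of the current surface, using that each $n(\Sigma_j)\ge1$ since every arc-carrying boundary winds positively and so contains a copy of $\alpha_1$. The mediant inequality $\min(\tfrac{a}{c},\tfrac{b}{d})\le\tfrac{a+b}{c+d}$ for $c,d>0$ gives $\mu\le\frac{-\chi(S)}{n(S)}$ at the start. A splitting replaces $(-\chi(\Sigma),n(\Sigma))$ by a single pair with the same $n$ and smaller $-\chi$, or by two pairs summing to $(-\chi(\Sigma)-1,n(\Sigma))$; a rewiring replaces it by two pairs summing to $(-\chi(\Sigma),n(\Sigma))$; in every case the mediant inequality shows the minimum ratio over the new components is $\le\frac{-\chi(\Sigma)}{n(\Sigma)}$, so $\mu$ never increases. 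Hence the component $\Sigma'$ of $S'$ achieving $\mu(S')$ satisfies $\frac{-\chi(\Sigma')}{n(\Sigma')}=\mu(S')\le\mu(S)\le\frac{-\chi(S)}{n(S)}$. The main obstacle is the termination argument — isolating the right monovariants (the conserved arc count bounding the strictly increasing number of pieces in Phase~1, and the then-fixed vertex count bounding the strictly increasing number of components in Phase~2) and noting that rewiring cannot resurrect a splitting — together with the routine but essential check that neither splitting nor the three restricted rewirings ever creates a component whose gluing graph has negative Euler characteristic.
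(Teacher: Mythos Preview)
Your argument is correct. The route differs from the paper's in two places worth noting. For termination, the paper introduces a single complexity $\kappa(S)=2e-2c+\ell$ (edges, components, number of components with $\chi(\Gamma)=0$) and checks that each splitting or rewiring of type~\RNum{1}, \RNum{2}, \RNum{3} strictly decreases $\kappa$; this handles the two operations uniformly and in any order. Your two-phase scheme instead uses two separate monovariants (the piece count, bounded by the conserved arc count, for splittings; then the component count, bounded by the now-frozen vertex count, for rewirings) together with the key observation that rewiring cannot re-enable a splitting since it does not touch the pieces. Both work; the paper's is a bit slicker, yours is more hands-on and avoids inventing $\kappa$. For the inequality, the paper argues globally at the end---total $-\chi$ is nonincreasing and total degree is preserved under the whole sequence, then a single weighted-average step gives a good component---whereas you track the minimum ratio $\mu$ step by step via the mediant inequality. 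These are equivalent here; your version needs (and you correctly supply) the check that every component produced along the way has degree $\ge1$, which the paper's global argument sidesteps.
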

	\begin{proof}
		Let $\kappa(S)=2e-2c+\ell$, where $e$ is the number of edges in $\Gamma_S$, $c$ is the number of components of $\Gamma_S$, and $\ell$ is the number of embedded loops in $\Gamma_S$. Equivalently, $\ell$ is the number of components of $\Gamma_S$ that have Euler characteristic zero. Note that $\kappa(S)$ is a non-negative integer since each component contains at least one edge.
		
		For the first assertion, it suffices to check that whenever we apply splitting or rewiring of type \RNum{1}, \RNum{2} or \RNum{3} to modify $S$ into another simple surface $S'$, we have $\kappa(S')<\kappa(S)$.
		Note that both splitting and rewiring leave the number of edges invariant. So it comes down to checking how $-2c+\ell$ varies in each situation.
		
		If splitting is applied to a component $\Sigma$ of $S$, either it breaks $\Gamma_\Sigma$ into two components without changing the number of embedded loops, or $\chi(\Gamma_\Sigma)=0$ and it breaks the core of $\Sigma$ without creating new components. Thus for the simple surface $S'$ obtained this way, we have either $\kappa(S')=\kappa(S)-2$ or $\kappa(S')=\kappa(S)-1$ corresponding to these two cases.
		
		If we apply rewiring of type \RNum{1} to a component $\Sigma$, then the tree $\Gamma_\Sigma$ breaks into two components, one of which contains a loop. Thus $\kappa(S')=\kappa(S)-1$. For rewiring of type \RNum{2}, we break the graph $\Gamma_\Sigma$ into two components each containing a loop, where one of loop is inherited from the core of $\Gamma_\Sigma$. Hence we get one more component and one more loop, yielding $\kappa(S')=\kappa(S)-1$. As for rewiring of type \RNum{3}, we also get one more component and one more loop. Thus for all the three types of rewiring we have $\kappa(S')=\kappa(S)-1$.
		
		For the second assertion, by Lemma \ref{lemma: chi formula via graph}, the (total) Euler characteristic of the simple surface does not change when we apply rewiring, and it increases by $1$ every time we apply splitting since we obtain one more vertex representing a disk piece.
		In addition, both operations do not change the total degree.
		Suppose we start with $S$ which has degree $n(S)$, and the irreducible simple surfaces we obtain in the end are $\Sigma'_1,\cdots, \Sigma'_k$ with degrees $n(\Sigma'_1),\cdots,n(\Sigma'_k)$ respectively. Then we have
		$$\frac{-\chi(S)}{n(S)}\ge \frac{\sum_{i=1}^k -\chi(\Sigma'_i)}{\sum_{i=1}^k n(\Sigma'_i)}\ge\min_{1\le i\le k}\frac{-\chi(\Sigma'_i)}{n(\Sigma'_i)},$$
		where the second inequality holds since the term in the middle is a weighted average. Thus the second assertion holds by taking $\Sigma'=\Sigma'_i$  where $\Sigma'_i$ achieves the minimum above.
	\end{proof}
	
	Now we bound the size of the gluing graph of any irreducible simple surface to show that there are only finitely many such surfaces for the given element $g=a_1b_1\cdots a_L b_L$. Note that there are $L^2$ possible types of turns on each side, giving rise to $L^2$ types of edges in gluing graphs.
	\begin{lemma}\label{lemma: valence bound}
		If the factor groups $A$ and $B$ are finite and $S$ is an irreducible simple surface for $g$, then the valence of each vertex of $\Gamma_S$ is at most $L^2\max\{|A|,|B|\}$.
	\end{lemma}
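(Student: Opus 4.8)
The plan is to deduce this directly from the contrapositive of Lemma \ref{lemma: splitting applicable}, once the valence of a vertex is identified with a count of turns. First I would record the following bookkeeping fact: for a vertex $v$ of the gluing graph $\Gamma_S$ representing a piece $C$, the valence of $v$ equals the number of turns on the polygonal boundary of $C$. Indeed, by Definition \ref{def: gluing graph} every edge incident to $v$ corresponds to a pair of glued compatible turns, and a turn on the polygonal boundary of $C$ glued to a turn on a \emph{different} piece contributes one edge at $v$, while a turn glued to another turn on $C$ itself produces a loop at $v$, counted twice in the valence and accounting for the two turns involved. Summing over all turns of the polygonal boundary of $C$ therefore recovers the valence of $v$ exactly.

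With this in hand, I would argue by contradiction. Suppose some vertex $v$ of $\Gamma_S$ has valence exceeding $L^2\max\{|A|,|B|\}$; without loss of generality the corresponding piece $C$ lies on the $A$-side. By the paragraph above, the polygonal boundary of $C$ then carries more than $L^2\max\{|A|,|B|\}\ge |A|\cdot L^2$ turns. But Lemma \ref{lemma: splitting applicable} says exactly that any piece on the $A$-side with more than $|A|\cdot L^2$ turns admits a splitting, contradicting the fact that $S$ is irreducible (Definition \ref{def: irreducible}), which forbids all splittings. The argument for a piece on the $B$-side is symmetric, using the $B$-side version of Lemma \ref{lemma: splitting applicable}. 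Hence every vertex of $\Gamma_S$ has valence at most $L^2\max\{|A|,|B|\}$.

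I do not expect a genuine obstacle here: the substantive pigeonhole argument is already packaged inside Lemma \ref{lemma: splitting applicable}, and the only point needing a little care is the valence--turns correspondence, namely that a turn glued to another turn on the same piece yields a loop in $\Gamma_S$ and so must be weighted by $2$ in the valence in order for the count to be exact.
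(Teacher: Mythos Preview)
Your argument is correct and follows the same route as the paper: assume a vertex has valence exceeding $L^2\max\{|A|,|B|\}$, identify valence with the number of turns on the polygonal boundary, and invoke Lemma~\ref{lemma: splitting applicable} to contradict irreducibility. One minor remark: your careful treatment of loops is unnecessary here, since compatible turns always pair an $A$-side turn with a $B$-side turn, so $\Gamma_S$ is bipartite and has no loops at all.
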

	\begin{proof}
		Suppose there is a vertex with valence greater than $L^2\max\{|A|,|B|\}$. Then we can apply splitting to the corresponding piece by Lemma \ref{lemma: splitting applicable}, which contradicts the assumption that $S$ is irreducible.
	\end{proof}
	
	\begin{lemma}\label{lemma: diameter bound for trees}
		If $S$ is an irreducible simple surface for $g$ with $\chi(\Gamma_S)=1$, then the diameter of $\Gamma_S$ is at most $2L^2$.
	\end{lemma}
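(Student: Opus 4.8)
The plan is to take a longest embedded path in the tree $\Gamma_S$ and show that, were its length to exceed $2L^2$, we could perform a rewiring of type \RNum{1}, contradicting irreducibility. Write such a path as $P = v_0 e_1 v_1 e_2 \cdots e_\ell v_\ell$ with $\ell = \mathrm{diam}(\Gamma_S)$; since $\chi(\Gamma_S)=1$ the graph is a tree, so this maximal path exists and its length is the diameter.

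First I would record the bipartite structure of $\Gamma_S$: every edge joins a vertex representing a piece on the $A$-side to one representing a piece on the $B$-side, and each edge carries a natural orientation from the $A$-side to the $B$-side (as fixed in Section \ref{subsec: rewiring}). Consequently the vertices $v_k$ of $P$ alternate between the two sides, and hence each edge $e_k$ is traversed along its natural $A\to B$ orientation exactly when $k$ has one fixed parity (say odd) and against it otherwise. In particular all of $e_1,e_3,e_5,\dots$ are traversed in the same direction relative to $P$, and likewise all of $e_2,e_4,\dots$.

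Next is the pigeonhole step. There are $L^2$ turn types on the $A$-side and correspondingly $L^2$ edge types in $\Gamma_S$, as noted just before the lemma. If $\ell \ge 2L^2+1$, then one of the two parity classes of $\{e_1,\dots,e_\ell\}$ has at least $L^2+1$ members, so two distinct edges $e_i,e_j$ with $i<j$ of the same parity share the same type. Being of the same parity, $e_i$ and $e_j$ are traversed in the same direction along $P$; restricting $P$ to the subpath from the tail of $e_i$ to the head of $e_j$ yields an embedded oriented path that starts and ends with two distinct edges of the same type and orientation. This is exactly the configuration to which a rewiring of type \RNum{1} applies, contradicting the irreducibility of $S$. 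Therefore $\ell \le 2L^2$.

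The step I would be most careful about is verifying that the two repeated edges genuinely meet the hypotheses of a type \RNum{1} rewiring, namely that they are distinct, of the same turn type, and traversed in the same direction along the embedded subpath; distinctness is clear from $i\ne j$, and the bipartiteness observation is precisely what forces the common orientation and makes the bound come out to $2L^2$ rather than $L^2$. I would also note that this argument uses only irreducibility, specifically the absence of type \RNum{1} rewirings, and not the finiteness of $A$ or $B$.
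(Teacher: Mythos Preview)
Your proof is correct and follows essentially the same approach as the paper: assume the diameter exceeds $2L^2$, apply the pigeonhole principle to the oriented edge types along a long embedded path, and deduce that a rewiring of type \RNum{1} applies, contradicting irreducibility. Your version is slightly more explicit in invoking the bipartite structure of $\Gamma_S$ to explain why same-parity edges carry the same orientation along $P$, whereas the paper simply counts $L^2$ types times $2$ orientations and pigeonholes directly; these are equivalent formulations of the same argument.
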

	\begin{proof}
		Suppose the diameter of $\Gamma_S$ is greater than $2L^2$. Then there is an embedded path $P$ of length at least $2L^2$, which we orient. Since there are at most $L^2$ types of edges in $\Gamma_S$, each with two possible orientations, there are two edges on $P$ that have the same type and orientation by the pigeonhole principle. Hence rewiring of type \RNum{1} is applicable, contradicting that $S$ is irreducible.
	\end{proof}
	
	\begin{lemma}\label{lemma: diameter bound for rooted trees}
		If $S$ is an irreducible simple surface for $g$ with $\chi(\Gamma_S)=0$, then for any decorative tree $T$ of $\Gamma_S$, the distance from any vertex of $T$ to its root is at most $2L^2$. In particular, the diameter of $T$ is at most $4L^2$.
	\end{lemma}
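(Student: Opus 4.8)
The plan is to argue by contradiction, following the proof of Lemma \ref{lemma: diameter bound for trees} almost verbatim but using rewiring of type \RNum{2} in place of rewiring of type \RNum{1}. Suppose some vertex $v$ of a decorative tree $T$ of $\Gamma_S$ lies at distance greater than $2L^2$ from the root $r$ of $T$. Since deleting $r$ separates $T$ from the core, the unique embedded path $P_0$ in $\Gamma_S$ joining $r$ to $v$ is contained entirely in $T$; orient $P_0$ so that it points away from the root, and note that it consists of more than $2L^2$ edges.

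Next I would apply the pigeonhole principle: there are at most $L^2$ types of edges in $\Gamma_S$, and each edge of $P_0$ is traversed in one of two directions along $P_0$, giving at most $2L^2$ possibilities; hence two edges of $P_0$ agree both in type and in orientation along $P_0$. Relabel them $e_1,e_2$ so that $e_1$ occurs before $e_2$ when $P_0$ is read from $r$ to $v$ (so $e_1$ is the one nearer the root), and let $P$ be the subpath of $P_0$ running from $e_1$ to $e_2$. Then $P$ is an embedded oriented path that starts with $e_1$ and ends with $e_2$, these being two distinct edges of the same type and orientation. Moreover, travelling from $e_2$ back towards $r$ inside $P_0$ one passes through $e_1$, so the unique path in $T$ connecting $e_2$ to the root contains $P$. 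This is exactly the configuration in which rewiring of type \RNum{2} applies, contradicting the irreducibility of $S$. Hence every vertex of $T$ is within distance $2L^2$ of $r$.

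Finally, for the ``in particular'' clause, I would use that $T$ is a tree: for any two of its vertices $u$ and $w$ the unique path between them has length at most $d_T(u,r)+d_T(r,w)\le 2L^2+2L^2=4L^2$, so the diameter of $T$ is at most $4L^2$. The only point requiring care is checking that the output of the pigeonhole argument genuinely satisfies all the hypotheses of type \RNum{2} rewiring --- in particular the containment of $P$ in the path from $e_2$ to the root and the fact that $P_0$ lies inside $T$ --- but both are immediate from the tree structure, so I expect no real obstacle beyond this bookkeeping.
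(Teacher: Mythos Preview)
Your proposal is correct and follows essentially the same approach as the paper's proof, which is just a terse reference to the pigeonhole argument of Lemma~\ref{lemma: diameter bound for trees} adapted to invoke rewiring of type~\RNum{2}. You have simply spelled out the details the paper leaves implicit, including the verification that the subpath $P$ is contained in the path from $e_2$ to the root and the triangle-inequality argument for the diameter bound.
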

	\begin{proof}
		If some vertex has distance more than $2L^2$ to the root, the geodesic connecting them contains more than $2L^2$ edges. So by the same argument as in the proof of Lemma \ref{lemma: diameter bound for trees}, rewiring of type \RNum{2} is applicable, contradicting that $S$ is irreducible.
	\end{proof}
	
	\begin{lemma}\label{lemma: diameter bound for core}
		If $S$ is an irreducible simple surface with $\chi(\Gamma_S)=0$, then the core of $\Gamma_S$ has length at most $2L^2$.
	\end{lemma}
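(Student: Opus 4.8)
The plan is to mimic the proofs of Lemmas \ref{lemma: diameter bound for trees} and \ref{lemma: diameter bound for rooted trees}, now invoking rewiring of type \RNum{3} in place of types \RNum{1} and \RNum{2}. Since $\chi(\Gamma_S)=0$, the gluing graph $\Gamma_S$ has a unique embedded loop, its core, which is an embedded cycle; the first step is to fix an orientation of this cycle as a circle, which induces an orientation on each edge lying on the core.

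Next I would count the relevant combinatorial data exactly as in the preceding lemmas: there are $L^2$ possible turn types on each side, hence $L^2$ types of edges in any gluing graph, and together with the choice of an orientation this gives at most $2L^2$ possibilities for an oriented edge. Now suppose for contradiction that the core has length greater than $2L^2$, i.e. it consists of more than $2L^2$ edges. By the pigeonhole principle, two of these oriented core edges have the same type and the same orientation relative to the fixed orientation of the core. This is precisely the configuration to which rewiring of type \RNum{3} applies (two oriented edges on the core of the same type and orientation), so a type \RNum{3} rewiring can be performed on $S$, contradicting the assumption that $S$ is irreducible (Definition \ref{def: irreducible}). Hence the core has length at most $2L^2$.

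I do not expect a genuine obstacle here; the argument is parallel to the two earlier diameter bounds, and the only point that needs care is to confirm that the pigeonhole count is taken over the right set of classes — namely (edge type, orientation with respect to a fixed orientation of the core), of which there are at most $2L^2$ — so that the two edges produced satisfy exactly the hypotheses demanded by rewiring of type \RNum{3}.
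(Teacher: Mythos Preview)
Your proposal is correct and follows exactly the same approach as the paper, which simply states that ``the same pigeonhole principle shows that rewiring of type \RNum{3} is applicable, contradicting that $S$ is irreducible.'' You have spelled out the pigeonhole count over (edge type, orientation) pairs more carefully than the paper does, but the argument is identical.
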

	\begin{proof}
		If the core has length greater than $2L^2$, the same pigeonhole principle shows that rewiring of type \RNum{3} is applicable, contradicting that $S$ is irreducible.
	\end{proof}
	
	\begin{proposition}\label{prop: finitely many irreducible}
		If $G=A*B$, where $A$ and $B$ are finite groups, then for any $g$ not conjugate into the factor groups, there are only finitely many irreducible simple surfaces.
	\end{proposition}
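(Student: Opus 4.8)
The plan is to bootstrap the valence and diameter bounds already established (Lemmas \ref{lemma: valence bound}, \ref{lemma: diameter bound for trees}, \ref{lemma: diameter bound for rooted trees}, and \ref{lemma: diameter bound for core}) into a uniform bound on the \emph{size} of the gluing graph $\Gamma_S$ of an irreducible simple surface $S$, and then argue that only finitely many combinatorial objects can have a gluing graph of bounded size. Fix once and for all a cyclically reduced representative $g=a_1b_1\cdots a_Lb_L$, so that $L$, $|A|$, and $|B|$ — and hence the constant $\Delta\defeq L^2\max\{|A|,|B|\}$ appearing in Lemma \ref{lemma: valence bound} — are fixed; every bound below will be a constant depending only on $g$. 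Recall also that an irreducible simple surface is connected (Definition \ref{def: irreducible}), so $\Gamma_S$ is connected and $\chi(\Gamma_S)\in\{0,1\}$.

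First I would bound the number of vertices of $\Gamma_S$. If $\chi(\Gamma_S)=1$, then $\Gamma_S$ is a tree of diameter at most $2L^2$ (Lemma \ref{lemma: diameter bound for trees}); choosing any vertex as a basepoint, every vertex lies within distance $2L^2$, and in a graph of maximum valence $\Delta$ the number of vertices within a fixed radius $r$ is at most $N_1\defeq 1+\Delta+\Delta(\Delta-1)+\cdots+\Delta(\Delta-1)^{r-1}$ with $r=2L^2$, so $|V(\Gamma_S)|\le N_1$. If $\chi(\Gamma_S)=0$, the core has at most $2L^2$ vertices (Lemma \ref{lemma: diameter bound for core}) and, by Lemma \ref{lemma: diameter bound for rooted trees}, every vertex of $\Gamma_S$ lies within distance $2L^2$ of some core vertex; the same radius count then gives $|V(\Gamma_S)|\le 2L^2 N_1\defeq N$. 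In either case $|V(\Gamma_S)|\le N$, hence $|E(\Gamma_S)|\le\tfrac12\Delta N$.

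Next I would observe that only finitely many piece types can occur in an irreducible $S$. The valence of a vertex equals the number of turns on the polygonal boundary of the corresponding piece, so by Lemma \ref{lemma: valence bound} that boundary carries at most $\Delta$ turns. Since a polygonal boundary on the $A$-side is determined by the cyclic word of its arc labels (drawn from $\alpha_1,\dots,\alpha_L$), and since whether the piece is a disk-piece or an annulus-piece is determined by the winding class — which is the product of those labels — there are at most $\sum_{k=1}^{\Delta}L^k$ piece types on the $A$-side, and similarly on the $B$-side. Thus the set of piece types that can appear in an irreducible simple surface is finite.

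Finally I would assemble the count. By Definition \ref{def: simple surf}, a simple surface is a finite collection of pieces together with a pairing of their turns into compatible pairs. By the two previous steps an irreducible simple surface uses at most $N$ pieces, each of one of finitely many types, and its pairing is a perfect matching on a set of at most $\Delta N$ turns; there are only finitely many such collections and finitely many such matchings, so there are only finitely many irreducible simple surfaces. The step I expect to require the most care is ensuring that the bounds really control the \emph{whole} graph in the $\chi(\Gamma_S)=0$ case — that no decorative vertex escapes — which is precisely why Lemma \ref{lemma: diameter bound for rooted trees} is stated in terms of distance to the root rather than the intrinsic diameter of a decorative tree: this is what lets the bound on the core propagate outward to every vertex. (Note that if one or both factor groups were infinite, Lemma \ref{lemma: splitting applicable} and hence the valence bound would fail, so this strategy genuinely uses finiteness of $A$ and $B$.)
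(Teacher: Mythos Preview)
Your proposal is correct and follows essentially the same route as the paper: combine the valence bound (Lemma \ref{lemma: valence bound}) with the diameter bounds (Lemmas \ref{lemma: diameter bound for trees}--\ref{lemma: diameter bound for core}) to bound the size of $\Gamma_S$, then observe that the cyclic word of edge types at a vertex determines the piece type. The paper compresses your vertex count into a single diameter bound of $5L^2$ on $\Gamma_S$ (in the $\chi(\Gamma_S)=0$ case, any two vertices are joined by a path of length at most $2L^2+L^2+2L^2$ through the core), but the substance is the same.
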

	\begin{proof}
		By Lemmas \ref{lemma: diameter bound for trees}--\ref{lemma: diameter bound for core}, the diameter of the gluing graph $\Gamma_S$ of any irreducible simple surface is bounded above (by $5L^2$). Moreover, the valence of each vertex is bounded above by $L^2\cdot \max\{|A|,|B|\}$. Thus there are only finitely many possible gluing graphs. Since there are finitely many types of edges, and the types of edges around a vertex with a chosen cyclic order determines the type of the corresponding piece, we conclude that there are only finitely many possible irreducible simple surfaces.
	\end{proof}
	
	\begin{theorem}\label{thm: rationality}
		For $G=A*B$ and $g=a_1b_1\cdots a_L b_L$ where each $a_i\in A\notin\{id\}$ and $b_i\in B\notin\{id\}$. If the subgroups generated by $\{a_1,\cdots, a_L\}$ and $\{b_1,\cdots,b_L\}$ respectively are both finite, then there is an irreducible simple surface $S$ of some degree $n(S)$ with $\chi(\Gamma_S)=0$ such that
		$$\stl_G(g)=-\frac{\chi(S)}{n(S)}.$$
		As a consequence, $\stl_G(g)$ is rational and computable.
	\end{theorem}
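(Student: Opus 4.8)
The plan is to assemble the structural results of Sections~\ref{sec: free prod} and~\ref{sec: compute stl}.

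First I would reduce to finite factor groups. Put $A_{0}=\la a_{1},\dots ,a_{L}\ra\le A$ and $B_{0}=\la b_{1},\dots ,b_{L}\ra\le B$; by hypothesis these are finite, and $g$ lies in $A_{0}*B_{0}$ as the same cyclically reduced word, so it is not conjugate into $A_{0}$ or $B_{0}$ (as $L\ge 1$ and the $a_i,b_i$ are nontrivial). Since a finite group is torsion, assumption~(\ref{item: assumption two}) of the Isometric Embedding Theorem~\ref{thm: isometric embedding} holds for the inclusions $A_{0}\hookrightarrow A$, $B_{0}\hookrightarrow B$, so $\stl_{A*B}(g)=\stl_{A_{0}*B_{0}}(g)$. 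Hence we may assume $A$ and $B$ are finite.

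Next I would identify $\stl_{G}(g)$ with a minimum over irreducible simple surfaces. By Corollary~\ref{cor: simple with chi01}, $\stl_{G}(g)=\inf_{S}-\chi(S)/n(S)$ as $S$ ranges over connected simple surfaces for $g$ with $\chi(\Gamma_{S})\in\{0,1\}$. Given such an $S$, Proposition~\ref{prop: reduce} turns it, through finitely many splittings and rewirings of types~\RNum{1}, \RNum{2}, and~\RNum{3}, into a disjoint union of irreducible simple surfaces with a component $\Sigma'$ satisfying $-\chi(\Sigma')/n(\Sigma')\le -\chi(S)/n(S)$; being connected and irreducible, $\Sigma'$ has $\chi(\Gamma_{\Sigma'})\in\{0,1\}$, whence $-\chi(\Sigma')/n(\Sigma')\ge\stl_{G}(g)$ by Corollary~\ref{cor: simple with chi01} again. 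Therefore
\[
\stl_{G}(g)=\min_{\Sigma}\frac{-\chi(\Sigma)}{n(\Sigma)},
\]
the minimum over all irreducible simple surfaces $\Sigma$ for $g$, which is attained because by Proposition~\ref{prop: finitely many irreducible} only finitely many exist. This already gives rationality, since $-\chi(\Sigma)\in\Z$ and $n(\Sigma)\in\Z_{+}$ for a minimizer $\Sigma$, and computability: by Lemmas~\ref{lemma: valence bound}--\ref{lemma: diameter bound for core} the gluing graph of an irreducible simple surface has diameter and vertex valence bounded in terms of $L$ and $\max\{|A|,|B|\}$, so one can enumerate all such labeled fatgraphs, retain those that are gluing graphs of irreducible simple surfaces, compute $-\chi/n$ for each, and return the smallest value.

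Finally I would upgrade the minimizer to one with $\chi(\Gamma_{S})=0$. If the minimizer $\Sigma_{0}$ already satisfies $\chi(\Gamma_{\Sigma_{0}})=0$, there is nothing to do; the remaining case is that $\Sigma_{0}$ is a tree, and one must exhibit an irreducible simple surface with $\chi(\Gamma)=0$ of the same complexity ratio. The natural approach is to show that $\stl_{G}(g)$ also equals the infimum of $-\chi(S)/n(S)$ over connected simple surfaces with $\chi(\Gamma_{S})=0$, by building from a minimizing tree a connected simple surface with $\chi(\Gamma)=0$ and the same ratio — a loop must be introduced into the gluing graph at no cost to $-\chi(S)/n(S)$ — and then reducing within this family by Proposition~\ref{prop: reduce}, verifying that rewirings of types~\RNum{2} and~\RNum{3} preserve the presence of a $\chi(\Gamma)=0$ component. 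I expect this last point to be the main obstacle: the obvious ways to fabricate a $\chi(\Gamma)=0$ surface from a tree (boundary connected sums of copies, covers pulled back from the tree's gluing graph, gluings along annulus-holes) either increase $-\chi$ relative to $n$, break connectedness, or leave the class of simple surfaces, so one needs a more careful gluing that genuinely creates a single non-separating edge, together with enough control on the order of operations in Proposition~\ref{prop: reduce} that splitting on a core — which would collapse $\chi(\Gamma)=0$ back to a tree — is not forced before irreducibility is reached.
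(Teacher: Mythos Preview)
Your first two steps --- the reduction to finite factors via Theorem~\ref{thm: isometric embedding}, and the identification of $\stl_G(g)$ with the minimum of $-\chi(\Sigma)/n(\Sigma)$ over the finite set of irreducible simple surfaces via Corollary~\ref{cor: simple with chi01}, Proposition~\ref{prop: reduce}, and Proposition~\ref{prop: finitely many irreducible} --- match the paper's proof exactly, and your derivation of rationality and computability from this is correct.

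The gap is in your final paragraph, and you have diagnosed it honestly: you do not have a construction that forces the minimizer to have $\chi(\Gamma)=0$. Your proposed strategy (manufacture from a minimizing tree a $\chi(\Gamma)=0$ surface with the \emph{same} ratio, then reduce within that class) is not the right one, and the obstacles you list are real. The paper instead proves the logically stronger statement that \emph{no} connected simple surface with $\Gamma_S$ a tree can realize the infimum, by exhibiting from any such $S$ another connected simple surface $S'$ (still with $\Gamma_{S'}$ a tree) of \emph{strictly} smaller ratio. The construction is a ``branched cover'' at an annulus-piece: since $g$ is not torsion, $S$ contains some annulus-piece $C$ whose hole represents a torsion element of order $k\ge 2$; take $k$ copies of $S$ and replace the $k$ copies of $C$ by a single disk-piece $C'$ whose polygonal boundary is the degree-$k$ cover of that of $C$ (its winding class is now trivial). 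If $\Gamma_S$ has $e$ edges and $d$ disk-pieces, then $\Gamma_{S'}$ has $ke$ edges and $kd+1$ disk-pieces, so by Lemma~\ref{lemma: chi formula via graph}
\[
\frac{-\chi(S')}{n(S')}=\frac{ke-(kd+1)}{k\,n(S)}=\frac{-\chi(S)}{n(S)}-\frac{1}{k\,n(S)}<\frac{-\chi(S)}{n(S)}.
\]
Since $S'$ again lies in the family of Corollary~\ref{cor: simple with chi01}, the minimum over irreducible surfaces cannot be attained at a tree, hence is attained at some irreducible $S$ with $\chi(\Gamma_S)=0$.
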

	\begin{proof}
	By the isometric embedding Theorem \ref{thm: isometric embedding} (and its proof), we may replace $A$ and $B$ by the subgroups generated by $\{a_1,\cdots, a_L\}$ and $\{b_1,\cdots,b_L\}$ respectively. Thus we will assume $A$ and $B$ to be finite.
	
		By Corollary \ref{cor: simple with chi01} and Proposition \ref{prop: reduce}, we know 
		$$\stl_G(g)=\inf_S\frac{-\chi(S)}{n(S)},$$
		where the infimum is taken over all irreducible simple surfaces $S$ and $n(S)$ is the degree of $S$.
		By Proposition \ref{prop: finitely many irreducible}, there are only finitely many irreducible simple surfaces. Hence one of them achieves the infimum above. 
		Thus $\stl_G(g)$ is rational and can be computed by enumerating the finitely many irreducible simple surfaces.
		
		It remains to observe that the infimum cannot be achieved by a simple surface where $\Gamma_S$ is a tree.
		For any such simple surface $S$ of degree $n(S)$, we have at least one annulus-piece $C$ since $g$ is not a torsion element.
		Suppose the polygonal boundary of $C$ represents a $k$-torsion element, where $k\ge2$. Then there is a simple surface $S'$ of degree $n(S')=k\cdot n(S)$ such that $\Gamma_{S'}\setminus\{v'\}$ is formed by $k$ disjoint copies of $\Gamma_S\setminus\{v\}$, where $v$ represents the piece $C$ and $v'$ represents a disk-piece $C'$ whose polygonal boundary is a degree $k$ cover of the polygonal boundary of $C$; see Figure \ref{fig: branchedcover}.
		By Lemma \ref{lemma: chi formula via graph}, we see that
		$$\frac{-\chi(S')}{n(S')}=\frac{ke-(kd+1)}{k\cdot n(S)}<\frac{e-d}{n(S)}=\frac{-\chi(S)}{n(S)},$$
		where $e$ is the number of edges in $\Gamma_S$ and $d$ the number of disk-pieces in $S$.
		This shows that no connected simple surface with $\chi(\Gamma_S)=1$ can achieve the minimal complexity.
	\end{proof}
	\begin{figure}
		\centering
		\labellist
		\small \hair 2pt
		\pinlabel $\alpha_1$ at 255 128
		\pinlabel $C'$ at 255 100
		\pinlabel $\alpha_1$ at 255 72
		\pinlabel $S'$ at 255 10
		\endlabellist
		\includegraphics[scale=0.7]{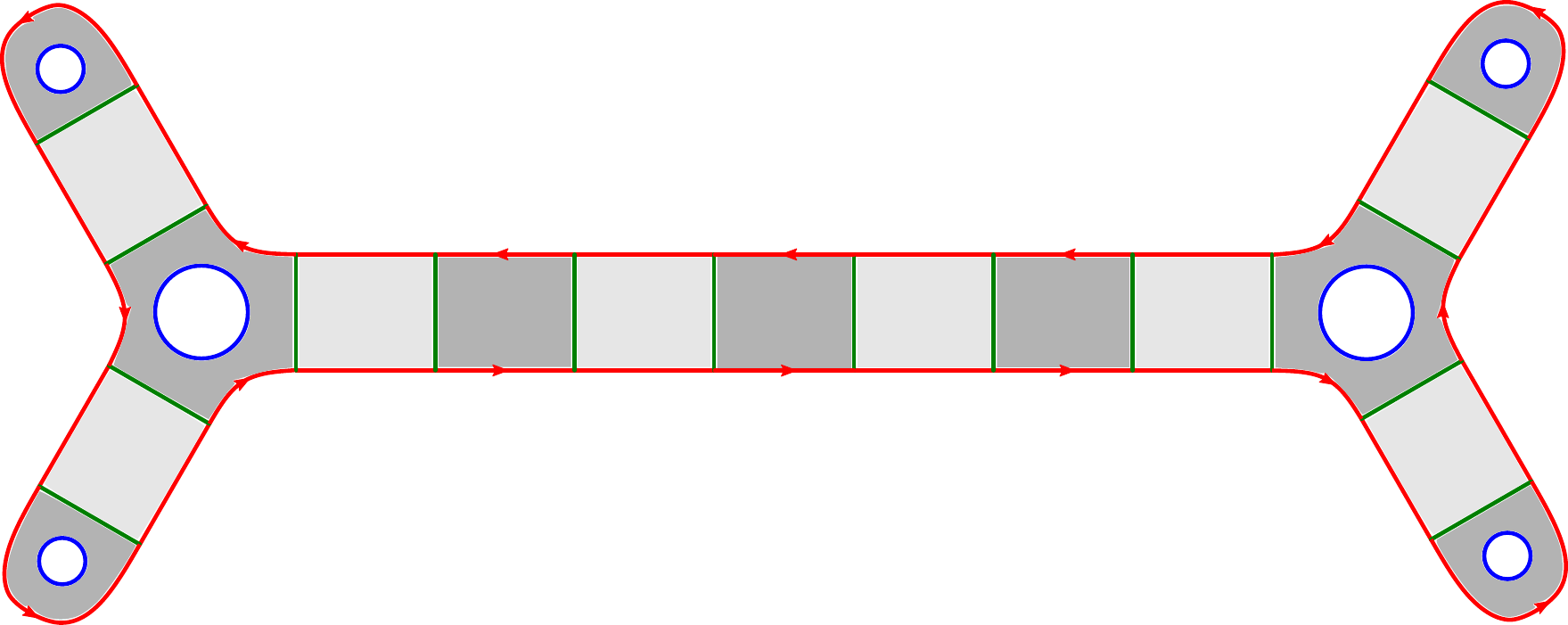}
		\caption{If $\alpha_1$ represents a $2$-torsion in the simple surface in Figure \ref{fig: simple_S_A}, then this is the ``branched'' degree $k=2$ cover that we construct with lower complexity, where we pick the piece $C$ as the leftmost piece in Figure \ref{fig: simple_S_A}.}
		\label{fig: branchedcover}
	\end{figure}

	Theorem \ref{introthm: rationality} follows from the theorem above.
	\begin{proof}[Proof of Theorem \ref{introthm: rationality}]
		Suppose $G=A*B$ with $A$ and $B$ both finite. If $g$ does not conjugate into $A$ or $B$, then by a suitable conjugation the result follows from Theorem \ref{thm: rationality}. If $g$ conjugates into $A$ or $B$, by finiteness of factor groups we know $\stl(g)=0$ since $\tl(g^n)\le 1$ for all $n$.
		
		The same analysis and argument work for free products of arbitrarily many factors (possibly infinitely many) without any difficulty, so the general case of Theorem \ref{introthm: rationality} also holds.

	\end{proof}		
		
	\begin{corollary}\label{cor: intermediate families}
		Let $G=A*B$ be a free product, where $A$ and $B$ are finite groups, and let $g\in G$ be an element not conjugate into the factor groups. 
		Suppose $\mathcal{S}$ is a family of simple surfaces for $g$ with the following properties:
		\begin{enumerate}
			\item For any simple surface in $\mathcal{S}$, each component $S$ has $\chi(\Gamma_S)\in\{0,1\}$.
			\item $\mathcal{S}$ contains all irreducible simple surfaces $S$ with $\chi(\Gamma_S)=0$.
		\end{enumerate}
		 Then
		$$\stl_G(g)=\inf_{S\in \mathcal{S}}\frac{-\chi(S)}{n}.$$
	\end{corollary}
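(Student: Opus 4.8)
The plan is to prove the two inequalities separately, assembling results already in hand rather than introducing anything new. Up to conjugation I may assume $g=a_1b_1\cdots a_Lb_L$ is cyclically reduced with $L\ge1$, since $\stl$ is a conjugacy invariant and, by hypothesis, $g$ is not conjugate into a factor. Note also that $g$ has infinite order: torsion in a free product is conjugate into a factor, and $A,B$ are finite, so no power of $g$ is torsion.

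For the inequality $\stl_G(g)\le\inf_{S\in\mathcal{S}}\frac{-\chi(S)}{n(S)}$, I would show that \emph{every} $S\in\mathcal{S}$ satisfies $\frac{-\chi(S)}{n(S)}\ge\stl_G(g)$. Write $S$ as the disjoint union of its connected components $\Sigma_1,\dots,\Sigma_m$. Each $\Sigma_i$ is a connected simple surface with $\chi(\Gamma_{\Sigma_i})\in\{0,1\}$ by hypothesis (1), and has positive degree $n(\Sigma_i)>0$ (Definition \ref{def: simple surf}, since each piece contributes an arc to some boundary component). By Corollary \ref{cor: simple with chi01}, $\stl_G(g)$ is the infimum of $\frac{-\chi(\Sigma)}{n(\Sigma)}$ over exactly this class of connected simple surfaces, so in particular $\frac{-\chi(\Sigma_i)}{n(\Sigma_i)}\ge\stl_G(g)$ for each $i$. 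Since $\frac{-\chi(S)}{n(S)}=\frac{\sum_i-\chi(\Sigma_i)}{\sum_i n(\Sigma_i)}$ is a weighted average of the quantities $\frac{-\chi(\Sigma_i)}{n(\Sigma_i)}$ with positive weights $n(\Sigma_i)$, it too is $\ge\stl_G(g)$. Taking the infimum over $S\in\mathcal{S}$ gives this direction.

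For the reverse inequality $\stl_G(g)\ge\inf_{S\in\mathcal{S}}\frac{-\chi(S)}{n(S)}$, I would invoke Theorem \ref{thm: rationality}: since $A$ and $B$ are finite (so the relevant subgroups generated by the $a_i$ and the $b_i$ are finite), there is an irreducible simple surface $S_0$ with $\chi(\Gamma_{S_0})=0$ realizing $\stl_G(g)=\frac{-\chi(S_0)}{n(S_0)}$. Irreducible simple surfaces are by definition connected, so $S_0$ is among the surfaces that hypothesis (2) forces into $\mathcal{S}$. Hence $\inf_{S\in\mathcal{S}}\frac{-\chi(S)}{n(S)}\le\frac{-\chi(S_0)}{n(S_0)}=\stl_G(g)$, and combining with the previous paragraph proves the equality.

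The argument is short, and I do not expect a genuine obstacle; the substantive input is entirely Theorem \ref{thm: rationality} (which packages the finiteness of irreducible simple surfaces and the reductions via splitting and rewiring), while the rest is bookkeeping. The two points that should be spelled out carefully are that, with $A$ and $B$ finite, Corollary \ref{cor: simple with chi01} indeed applies component-by-component (so components with $\chi(\Gamma_{\Sigma_i})=0$ are controlled through the approximation Lemma \ref{lemma: approximation} together with Lemma \ref{lemma: ample}), and that the weighted-average step uses $n(\Sigma_i)>0$ for every component. If anything, the latter weighted-average observation is the step a careful reader will want verified in detail.
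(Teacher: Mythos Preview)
Your proposal is correct and follows essentially the same approach as the paper, which simply states that the result is a combination of Corollary~\ref{cor: simple with chi01} and Theorem~\ref{thm: rationality}. You have spelled out exactly that combination: the componentwise application of Corollary~\ref{cor: simple with chi01} together with the weighted-average argument gives the inequality $\stl_G(g)\le\inf_{S\in\mathcal{S}}\frac{-\chi(S)}{n(S)}$, and the existence of an optimal irreducible $S_0$ with $\chi(\Gamma_{S_0})=0$ from Theorem~\ref{thm: rationality} gives the reverse inequality via hypothesis~(2).
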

	\begin{proof}
		This is a combination of Corollary \ref{cor: simple with chi01} and Theorem \ref{thm: rationality}.
	\end{proof}
	
	\begin{remark}\label{remark: complexity}
		In Corollary \ref{cor: intermediate families}, one can take $\mathcal{S}$ to be the subfamily of simple surfaces for $g$ where each component satisfies the valence and diameter bounds (of decorative trees) in Lemmas \ref{lemma: diameter bound for trees}--\ref{lemma: diameter bound for core}. The number of simple surfaces in $\mathcal{S}$ is around the order of $L^{{(C L)}^{4L^2}}$, where $C=\max\{|A|,|B|\}$ and $2L$ is the (cyclically reduced) word length of $g$. Thus enumerating	such surfaces gives an algorithm to compute $\stl_G(g)$ with terrible complexity.
		
		However, one should be able to parameterize such surfaces as integer vectors in a rational polyhedral cone, where each variable represents the number of some type of small building blocks used in the surface.
		Projectively, such surfaces are represented by rational points in a compact rational polyhedron.
		This gives a way to use linear programming to compute $\stl_G(g)$ when $G=A*B$ is a free product of finite abelian groups. When $A$ and $B$ are finite cyclic groups, using a setup similar to the one in \cite{Alden}, the numbers of variables and constraints in the linear programming problem are polynomial in $|A|, |B|, |g|$. Thus for a free product $G$ of finite cyclic groups, $\stl_G(g)$ can be computed in polynomial time by \cite{linprog}.
	\end{remark}

	\subsection{Generalizations to other factor groups}\label{subsec: generalization}
	We briefly discuss how one may generalize this method to allow more general factor groups, e.g. infinite ones. We explain where we essentially used the finiteness of the factor groups.
	
	The rewiring operation does not rely on the structure of factor groups at all, and the bounds on the diameter of the gluing graph $\Gamma_S$ of irreducible simple surfaces $S$ (Lemmas \ref{lemma: diameter bound for trees}, \ref{lemma: diameter bound for rooted trees} and \ref{lemma: diameter bound for core}) only depend on the word length of the target element.
	
	We used finiteness of the factor groups in the approximation of connected simple surfaces $S$ with $\chi(\Gamma_S)=0$ by those with $\chi(\Gamma_S)=1$ (Lemma \ref{lemma: approximation}), but it is not essential. For general factor groups, one should modify the definition of simple surfaces by further requiring pieces to only contain \emph{admissible turns}, which are those turns that appear in some connected simple surface $S$ with $\chi(\Gamma_S)=1$.
	Then the assumption of Lemma \ref{lemma: approximation} holds automatically, avoiding Lemma \ref{lemma: ample} (where we used finiteness of factor groups), while $\stl_G(g)$ can still be computed by looking at connected simple surfaces $S$ (using the modified definition) with $\chi(\Gamma_S)\in\{0,1\}$ (i.e. Corollary \ref{cor: simple with chi01}), under either assumptions of Corollary \ref{cor: simple surface is enough}.
	
	Provided that we can uniformly bound the valence of vertices in $\Gamma_S$ for any irreducible simple surface $S$ (Lemma \ref{lemma: valence bound}), we still get an algorithm to compute $\stl_G(g)$ and rationality.
	This requires a replacement or improvement of Lemma \ref{lemma: splitting applicable} that does not require the factor groups to be finite. Even if the factor groups are (infinite) torsion groups, it is not clear if Lemma \ref{lemma: splitting applicable} generalizes to that case. This lemma also appears to fail when the factor groups are infinite abelian groups.
	However, it might be possible to obtain a uniform bound on the valance in a different way, since using a piece with an enormous number of turns seems inefficient and might be ruled out a priori.

	\section{Examples} \label{sec: examples}
	In this section we explicitly compute the stable torsion length in two different examples, for $g=aba^{-1}b^{-1}$ and $g=ab$ in a free product $G=A*B$ with torsion elements $a\in A$ and $b\in B$. In both examples, we first work out a sufficient collection (Definition \ref{def: sufficient collection}) of types of pieces, then use the linear programming problem as in Section \ref{subsec: linprog lower bound} to compute a lower bound of $\stl_G(g)$, and finally use the Approximation Lemma \ref{lemma: approximation} to show that $\stl_G(g)$ actually equals the lower bound.
	
	\subsection{The word $[a,b]$}
	In this section we consider the word $g=[a,b]=aba^{-1}b^{-1}$ in a free product $G=A*B$, where $a\in A$ and $b\in B$ are torsion elements of orders $p,q\ge2$. We will focus on the special case where $A=\Z/p$ is generated by $a$ and $B=\Z/q$ is generated by $b$. The general case will follow from the isometric embedding Theorem \ref{thm: isometric embedding}.
	
	Using the setup in Section \ref{sec: free prod}, the word $g=[a,b]$ is represented by a loop $\gamma$ consisting of arcs $\alpha_1,\beta_1,\alpha_2,\beta_2$, where $\alpha_1,\alpha_2$ represent $a,a^{-1}$ and $\beta_1,\beta_2$ represent $b,b^{-1}$ respectively. Then there are four types of turns on the $A$-side: $(\alpha_1,\alpha_1)$, $(\alpha_1,\alpha_2)$, $(\alpha_2,\alpha_1)$, and $(\alpha_2,\alpha_2)$.
	
	Note that the two turns $(\alpha_1,\alpha_2)$ and $(\alpha_2,\alpha_1)$ form a polygonal boundary that bounds a disk-piece, which we denote by $R_1$. Moreover, on the polygonal boundary of any piece, the number of copies of $(\alpha_1,\alpha_2)$ is equal to that of $(\alpha_2,\alpha_1)$ since the polygonal boundary closes up. Denote this number in a piece $C$ by $N(C)\ge0$.
	
	If $N(C)\ge2$ for a piece $C$, then we can remove a copy of $(\alpha_1,\alpha_2)$ and $(\alpha_2,\alpha_1)$ so that the remaining turns still form a polygonal boundary.
	Thus we can apply splitting (the general form that works for abelian factor groups) to this piece $C$ to obtain a copy of $R_1$ and some piece $C'$ with $N(C')=N(C)-1$; see case (\ref{item: wild splitting}) of Example \ref{example: splitting}.
	
	If a piece $C$ has $N(C)=1$, then the arcs on the boundary of $C$ in the cyclic order must be $m$ copies of $\alpha_1$ followed by $n$ copies of $\alpha_2$ for some $m,n\ge1$. 
	\begin{enumerate}
	    \item If $m=n$ then we have a disk-piece, which we denote by $R_n$. For $n>p$, we have $p$ consecutive copies of the turn $(\alpha_1,\alpha_1)$ (resp. $(\alpha_2,\alpha_2)$) on the boundary of $R_n$. Thus we can apply splitting twice to reduce $R_n$ to $R_{n-p}$ together with two pieces $P_p^+$ and $P_p^-$,
	    where $P_p^+$ (resp. $P_p^-$) is the disk-piece with exactly $p$ copies of $\alpha_1$ (resp. $\alpha_2$) on the boundary. See case (\ref{item: geom splitting}) of Example \ref{example: splitting} for one of the splitting when $p=2$.
	    \item If $m>n$ then we can apply splitting to reduce the piece $C$ into a copy of $R_n$ and a piece with $m-n$ copies of $\alpha_2$ on the boundary. Similarly for the case $m<n$.
	\end{enumerate}
	
	Thus for any simple surface $S$, after applying splittings as above, we may assume that any piece $C$ other than $R_n$ with $1\le n\le p$ has $N(C)=0$. Thus any piece different from $R_n$ only contains one type of turns, either $(\alpha_1,\alpha_1)$ or $(\alpha_2,\alpha_2)$.
	Moreover, since $a$ has order $p$, splitting applies to any piece with more than $p$ copies of the turn $(\alpha_1,\alpha_1)$ (resp. $(\alpha_2,\alpha_2)$) on the boundary.
	
	Thus there are $3p$ types of remaining pieces on the $A$-side, which fall into three classes: 
	\begin{enumerate}
		\item A piece with $1\le n\le p$ arcs $\alpha_1$ on the boundary, which is a disk only when $n=p$. Denote such pieces as $P_n^+$; see the left on the first row of Figure \ref{fig: possible_pieces}.
		\item A piece with $1\le n\le p$ arcs $\alpha_2$ on the boundary, which is a disk only when $n=p$. Denote such pieces by $P_n^-$; see the left on the second row of Figure \ref{fig: possible_pieces}.
		\item A disk-piece $R_n$ for $1\le n\le p$ that has $n$ copies of $\alpha_1$ followed by $n$ copies of $\alpha_2$ on the boundary; see the left on the third row of Figure \ref{fig: possible_pieces}.
	\end{enumerate}
	
	Similarly, we can reduce simple surfaces by splitting on the $B$-side so that there are $3q$ types of remaining pieces on the $B$-side, denoted as $Q_n^+$, $Q_n^-$, and $T_n$ for $1\le n\le q$, where $q$ is the order of $b$.
	
	Let $\mathcal{P}$ be the collection consisting of the above $3p$ pieces on the $A$-side and $3q$ pieces on the $B$-side, depicted in Figure \ref{fig: possible_pieces} for the case $p=4$ and $q=3$.
	
	\begin{figure}
		\centering
		\labellist
		\small \hair 2pt
		
		\pinlabel $a$ at 15 227
		\pinlabel $P_1^+$ at 5 173
		
		\pinlabel $a$ at 95 185
		\pinlabel $a$ at 95 220
		\pinlabel $P_2^+$ at 75 173
		
		\pinlabel $a$ at 177 181
		\pinlabel $a$ at 160 215
		\pinlabel $a$ at 198 215
		\pinlabel $P_3^+$ at 160 173
		
		\pinlabel $a$ at 260 237
		\pinlabel $a$ at 260 173
		\pinlabel $a$ at 228 205
		\pinlabel $a$ at 290 205
		\pinlabel $P_4^+$ at 230 173
		
		\pinlabel $b$ at 325 225
		\pinlabel $Q_1^+$ at 320 173
		
		\pinlabel $b$ at 407 183
		\pinlabel $b$ at 407 220
		\pinlabel $Q_2^+$ at 390 173
		
		\pinlabel $b$ at 490 181
		\pinlabel $b$ at 471 215
		\pinlabel $b$ at 510 215
		\pinlabel $Q_3^+$ at 470 173
		
		\pinlabel $a^{-1}$ at 15 135
		\pinlabel $P_1^-$ at 5 83
		
		\pinlabel $a^{-1}$ at 97 93
		\pinlabel $a^{-1}$ at 95 132
		\pinlabel $P_2^-$ at 75 83
		
		\pinlabel $a^{-1}$ at 177 91
		\pinlabel $a^{-1}$ at 157 128
		\pinlabel $a^{-1}$ at 205 128
		\pinlabel $P_3^-$ at 150 83
		
		\pinlabel $a^{-1}$ at 265 150
		\pinlabel $a^{-1}$ at 262 83
		\pinlabel $a^{-1}$ at 223 118
		\pinlabel $a^{-1}$ at 298 118
		\pinlabel $P_4^-$ at 230 83
		
		\pinlabel $b^{-1}$ at 325 135
		\pinlabel $Q_1^-$ at 315 83
		
		\pinlabel $b^{-1}$ at 407 93
		\pinlabel $b^{-1}$ at 405 132
		\pinlabel $Q_2^-$ at 385 83
		
		\pinlabel $b^{-1}$ at 490 88
		\pinlabel $b^{-1}$ at 467 128
		\pinlabel $b^{-1}$ at 515 128
		\pinlabel $Q_3^-$ at 460 83
		
		\pinlabel $a$ at 3 32
		\pinlabel $a^{-1}$ at 45 35
		\pinlabel $R_1$ at 5 -8
		
		\pinlabel $a$ at 77 52
		\pinlabel $a$ at 77 12
		\pinlabel $a^{-1}$ at 119 55
		\pinlabel $a^{-1}$ at 119 15
		\pinlabel $R_2$ at 85 -8
		
		\pinlabel $a$ at 160 58
		\pinlabel $a$ at 144 32
		\pinlabel $a$ at 160 4
		\pinlabel $a^{-1}$ at 200 61
		\pinlabel $a^{-1}$ at 218 35
		\pinlabel $a^{-1}$ at 200 5
		\pinlabel $R_3$ at 155 -8
		
		\pinlabel $a$ at 247 62
		\pinlabel $a$ at 230 45
		\pinlabel $a$ at 230 19
		\pinlabel $a$ at 247 0
		\pinlabel $a^{-1}$ at 280 63
		\pinlabel $a^{-1}$ at 299 45
		\pinlabel $a^{-1}$ at 299 23
		\pinlabel $a^{-1}$ at 280 1
		\pinlabel $R_4$ at 235 -8
		
		\pinlabel $b$ at 317 32
		\pinlabel $b^{-1}$ at 358 33
		\pinlabel $T_1$ at 320 -8
		
		\pinlabel $b$ at 389 52
		\pinlabel $b$ at 389 12
		\pinlabel $b^{-1}$ at 430 53
		\pinlabel $b^{-1}$ at 430 13
		\pinlabel $T_2$ at 390 -8
		
		\pinlabel $b$ at 474 58
		\pinlabel $b$ at 459 32
		\pinlabel $b$ at 474 3
		\pinlabel $b^{-1}$ at 513 61
		\pinlabel $b^{-1}$ at 530 35
		\pinlabel $b^{-1}$ at 513 4
		\pinlabel $T_3$ at 465 -8
		
		\endlabellist
		\includegraphics[scale=0.7]{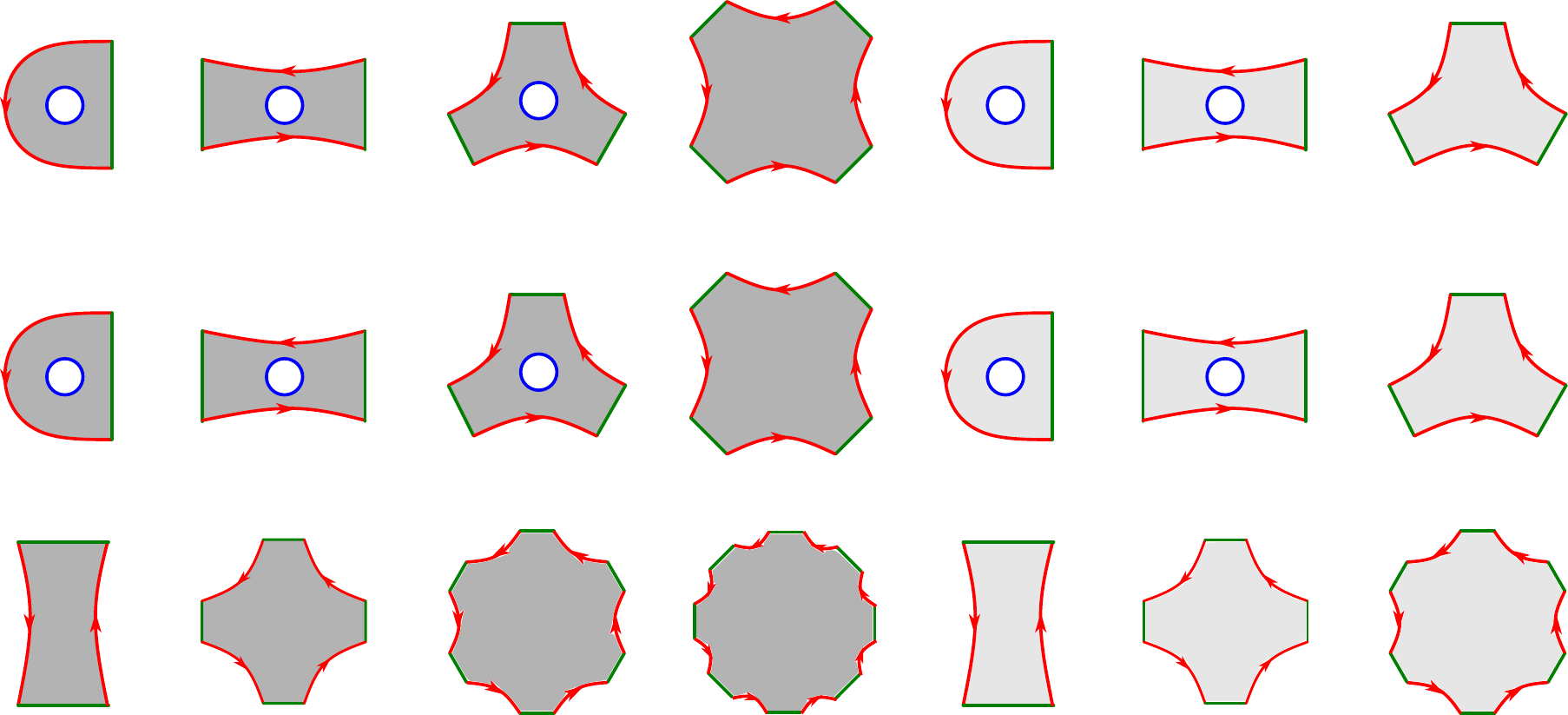}
		\caption{The pieces in the collection $\mathcal{P}$ when $p=4$ and $q=3$.} 
		\label{fig: possible_pieces}
	\end{figure}
	
	\begin{lemma}\label{lemma: sufficient collection for [a,b]}
		The collection $\mathcal{P}$ is sufficient.
	\end{lemma}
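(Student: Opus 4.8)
The plan is to derive sufficiency of $\mathcal{P}$ from two ingredients already in hand: that the full collection $\tcP$ is sufficient (this is Lemma \ref{lemma: stl and simple surfaces} applied with $\mathcal{P}=\tcP$), and the explicit reduction to pieces of $\mathcal{P}$ carried out in the paragraphs above. Since $\tcP$ is sufficient, $\stl(g)\ge\inf\frac{-\chi(S)}{n(S)}$ over all connected simple surfaces $S$ with $\chi(\Gamma_S)=1$; so it is enough to show that every such $S$ can be converted, without increasing $\frac{-\chi(S)}{n(S)}$, into a connected $\mathcal{P}$-simple surface whose gluing graph is a tree.

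First I would run on $S$ the sequence of splittings described above: split off copies of $R_1$ until every piece other than some $R_n$ (or $T_n$) has $N(\cdot)=0$, then, using that $a$ and $b$ have orders $p$ and $q$ and that the factor groups are abelian, split off copies of $P^{\pm}_p$ and $Q^{\pm}_q$ and reduce every $R_n$, $T_n$ with index exceeding $p$, $q$, until all pieces lie in $\mathcal{P}$. Termination I would get from a strictly decreasing nonnegative integer, for instance the total number of turns lying on pieces whose type is not yet in $\mathcal{P}$: every splitting used creates one new piece that is already in $\mathcal{P}$ and leaves a residual piece with strictly fewer turns. Call the resulting (possibly disconnected) $\mathcal{P}$-simple surface $\widehat S$.

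Next I would verify the bookkeeping. Splitting neither creates nor destroys arcs, so $n(\widehat S)=n(S)$. By Lemma \ref{lemma: chi formula via graph}, $-\chi=e-d$ with $e$ the number of edges of the gluing graph and $d$ the number of disk-pieces; a splitting leaves $e$ fixed (it does not change the pairing of turns) and raises $d$ by exactly one (the new piece $C_1$ is a disk, while $C_2$ has the topological type of $C$), so $-\chi$ drops at each step and $-\chi(\widehat S)\le-\chi(S)$. Moreover a splitting applied to a component with tree gluing graph replaces one vertex by two and partitions the edges formerly incident to it; since deleting a vertex of a tree leaves subtrees each joined to it by a single edge, the outcome is again a disjoint union of trees, so inductively every component $\Sigma_i$ of $\widehat S$ has $\chi(\Gamma_{\Sigma_i})=1$. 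Writing $\frac{-\chi(\widehat S)}{n(\widehat S)}$ as the weighted average of the $\frac{-\chi(\Sigma_i)}{n(\Sigma_i)}$, some component $\Sigma$ satisfies $\frac{-\chi(\Sigma)}{n(\Sigma)}\le\frac{-\chi(\widehat S)}{n(\widehat S)}\le\frac{-\chi(S)}{n(S)}$, and this $\Sigma$ is the desired connected $\mathcal{P}$-simple surface with $\chi(\Gamma_\Sigma)=1$.

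The step I expect to be the main obstacle is confirming that the reduction genuinely terminates inside $\mathcal{P}$ in every case — in particular that an $R_n$ with $n>p$ splits (twice) into $R_{n-p}$ together with $P^+_p$ and $P^-_p$, including the boundary case $n=p+1$. This is where the abelian form of splitting is essential: the subcollection of turns split off need only be balanced and of trivial winding class, not contiguous along the polygonal boundary, so one may peel off exactly the $p$ turns of type $(\alpha_1,\alpha_1)$ (respectively $(\alpha_2,\alpha_2)$) together with $p$ of the $\alpha_1$-arcs (respectively $\alpha_2$-arcs). Once this case analysis is settled, the remaining arguments are routine.
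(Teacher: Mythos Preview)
Your proposal is correct and follows essentially the same approach as the paper's proof: reduce an arbitrary connected simple surface $S$ with $\chi(\Gamma_S)=1$ to a $\mathcal{P}$-simple surface via the splittings described in the preceding discussion, observe (via Lemma~\ref{lemma: chi formula via graph}) that each splitting preserves the degree and strictly decreases $-\chi$, note that splitting a vertex of a tree yields two trees, and then pass to the component of least complexity. Your write-up is in fact more careful than the paper's on termination, on why each resulting component still has a tree gluing graph, and on the boundary case $n=p+1$ for $R_n$ (where the non-contiguous abelian form of splitting is indeed what is being used); these are exactly the points the paper leaves implicit.
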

	\begin{proof}
		The discussion above shows that any simple surface can be reduced to a $\mathcal{P}$-simple surface by a sequence of splittings. 
		Note that splitting preserves
		the number of edges in the gluing graph and adds a vertex representing a disk piece, thus it decreases $-\chi(S)$ by Lemma \ref{lemma: chi formula via graph}.
		Thus if we start with a connected simple surface $S$ with $\chi(\Gamma_S)=1$,
		applying splitting once (if applicable) modifies it into a $\mathcal{P}$-simple surface $S'$ of the same degree with $-\chi(S')<-\chi(S)$. Moreover, the resulting gluing graph $\Gamma_{S'}$ has two components, each of which is a tree. Hence one of the two components has lower complexity than the original one.
		Thus the infimum of $-\chi(S)/n(S)$ over all connected simple surfaces with $\chi(\Gamma_S)=1$ does not change as we restrict to connected $\mathcal{P}$-simple surfaces with $\chi(\Gamma_S)=1$. Hence $\mathcal{P}$ is sufficient by definition.
	\end{proof}
	
	Now we can apply the formalism in Section \ref{subsec: linprog lower bound} to compute a lower bound of $\stl_G([a,b])$ by linear programming. The lower bound turns out to be sharp in this case. The following elementary observation is helpful to simplify our computation.
	
	\begin{lemma}\label{lemma: reduction}
		For any $n\ge2$ and any set of numbers $x_k\ge 0$, $1\le k\le n$, there is another set of numbers $x'_k\ge0$ such that 
		\begin{enumerate}
			\item $\sum_{k=1}^n k x'_k=\sum_{k=1}^n k x_k$;\label{item: weighted sum inv}
			\item $\sum_{k=1}^n x'_k = \sum_{k=1}^n x_k$;\label{item: sum inv}
			\item $x'_n\ge x_n$; and\label{item: last term increase}
			\item $x'_k=0$ for all $1<k<n$.
		\end{enumerate}
	\end{lemma}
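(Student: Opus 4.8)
The plan is to construct the desired numbers explicitly by redistributing each intermediate weight. The key elementary observation is that for $1<j<n$ the integer $j$ is a convex combination of the two extreme indices $1$ and $n$, namely
\[
j=\frac{n-j}{n-1}\cdot 1+\frac{j-1}{n-1}\cdot n,\qquad \frac{n-j}{n-1}+\frac{j-1}{n-1}=1,
\]
where both coefficients lie in $[0,1]$ because $0<j-1<n-1$. Thus one unit of ``mass'' sitting at position $j$ can be replaced by $\tfrac{n-j}{n-1}$ units at position $1$ together with $\tfrac{j-1}{n-1}$ units at position $n$ without changing either the total count $\sum_k x_k$ or the weighted sum $\sum_k k x_k$.

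Carrying this out for all intermediate indices at once, I would set
\[
x'_j=0\ \text{ for }1<j<n,\qquad
x'_1=x_1+\sum_{j=2}^{n-1}\frac{n-j}{n-1}\,x_j,\qquad
x'_n=x_n+\sum_{j=2}^{n-1}\frac{j-1}{n-1}\,x_j .
\]
(When $n=2$ the sums are empty and $x'_k=x_k$, the trivial case.) Property (4) holds by definition, and since each $x_j\ge0$ and the coefficients are nonnegative, all $x'_k\ge0$ and $x'_n\ge x_n$, which is (3).

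It then remains to verify (1) and (2) by direct computation. For (2),
\[
\sum_{k=1}^n x'_k=x'_1+x'_n=x_1+x_n+\sum_{j=2}^{n-1}\Bigl(\tfrac{n-j}{n-1}+\tfrac{j-1}{n-1}\Bigr)x_j=\sum_{k=1}^n x_k,
\]
since the two coefficients sum to $1$. For (1),
\[
\sum_{k=1}^n k\,x'_k=x'_1+n\,x'_n=x_1+n\,x_n+\sum_{j=2}^{n-1}\Bigl(\tfrac{n-j}{n-1}+\tfrac{n(j-1)}{n-1}\Bigr)x_j,
\]
and the bracketed coefficient simplifies to $\tfrac{(n-j)+n(j-1)}{n-1}=\tfrac{j(n-1)}{n-1}=j$, so the right-hand side equals $\sum_{k=1}^n k\,x_k$. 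There is no real obstacle: this is purely a bookkeeping lemma, the only idea being the convexity identity above, and everything else is routine algebra.
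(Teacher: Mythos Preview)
Your proof is correct and uses essentially the same idea as the paper: both redistribute the mass at each intermediate index $j$ to the endpoints $1$ and $n$ via the convex combination $j=\tfrac{n-j}{n-1}\cdot 1+\tfrac{j-1}{n-1}\cdot n$. The only cosmetic difference is that the paper zeroes out one $x_i$ at a time and appeals to iteration, whereas you perform the redistribution for all intermediate indices simultaneously in a single closed formula.
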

	\begin{proof}
		If $x_i>0$ for some $1<i<n$, we construct non-negative numbers $\{x'_k\}$ with $x'_i=0$ and satisfying bullets (\ref{item: weighted sum inv}), (\ref{item: sum inv}) and (\ref{item: last term increase}). Let $\lambda=\frac{n-i}{n-1}\in (0,1)$ and $\mu=\frac{i-1}{n-1}\in (0,1)$. Then $\lambda+n\mu=i$ and $\lambda+\mu=1$. So the following set of numbers
		\begin{align*}
			x'_1&=x_1+\lambda x_i, & x'_n&=x_n+\mu x_i, &x'_i&=0, & \text{and } x'_k&=x_k\text{ for all }k\neq 1,i,n,
		\end{align*}
		satisfies (\ref{item: weighted sum inv}), (\ref{item: sum inv}) and (\ref{item: last term increase}). 
		Hence, the conclusion follows by a sequence of such changes by making one $x_i$ zero at a time.
	\end{proof}
	
	\begin{lemma}\label{lemma: compute abAB}
		If $A=\Z/p$ and $B=\Z/q$ are generated by $a,b$, then for $G=A*B$ we have
		$$\stl_G([a,b])=1-\frac{1}{\min(p,q)-1}.$$
	\end{lemma}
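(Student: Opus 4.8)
The plan is to prove the two inequalities $\stl_G([a,b])\ge 1-\tfrac{1}{\min(p,q)-1}$ and $\stl_G([a,b])\le 1-\tfrac{1}{\min(p,q)-1}$ separately. Since $[a,b]=([b,a])^{-1}$, since $\stl$ is unchanged under inversion, and since $A*B=B*A$ with the roles of $a$ and $b$ interchanged, we may assume $p\le q$, so the target value is $1-\tfrac1{p-1}=\tfrac{p-2}{p-1}$.

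For the lower bound I would run the linear program of Section \ref{subsec: linprog lower bound} for the sufficient collection $\mathcal P$ of Lemma \ref{lemma: sufficient collection for [a,b]}; by Lemma \ref{lemma: linprog estimate} together with Lemmas \ref{lemma: surf to vect}--\ref{lemma: linprog} it suffices to show $-\chi_o\ge\tfrac{p-2}{p-1}$ on the polyhedron $C_{\mathcal P}$. Writing $x_n^{\pm},r_n$ for the numbers of pieces $P_n^{\pm},R_n$ on the $A$-side and $y_n^{\pm},t_n$ for those of $Q_n^{\pm},T_n$ on the $B$-side, and setting $M=\sum_n n x_n^+$, $M_R=\sum_n n r_n$, $R=\sum_n r_n$ and analogously $M',M_T,T$, the gluing conditions reduce (using $\sum_n n x_n^+=\sum_n n x_n^-$ and the analogous $B$-side identity) to the two relations $M+M_R-R=T$ and $R=M'+M_T-T$, which together with the normalization $M+M_R=1$ collapse to $R+T=1$ and $M'+M_T=1$. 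Feeding these into $\chi_o(e_P)=\chi(P)-\tfrac{e(P)}2$ and $\chi_\Gamma(e_P)=1-\tfrac{e(P)}2$ from that section, a short bookkeeping then gives the clean formulas $-\chi_o=1-\bigl(x_p^++x_p^-+y_q^++y_q^-\bigr)$ and $\chi_\Gamma=P^++P^-+Q^++Q^--1$, where $P^{\pm}=\sum_n x_n^{\pm}$ and $Q^{\pm}=\sum_n y_n^{\pm}$. Thus the lower bound becomes the assertion that $x_p^++x_p^-+y_q^++y_q^-\le\tfrac1{p-1}$ on $C_{\mathcal P}$.

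The hard part is exactly this last maximization, and the naive bound is too weak: from $\sum_n n x_n^+=M$ one only gets $x_p^+\le M/p$, and with $M+M'\le 1$ (which follows from $1=R+T\le M_R+M_T=(1-M)+(1-M')$) this yields just $\le\tfrac2p$, which agrees with $\tfrac1{p-1}$ only when $p=2$. The extra leverage is the constraint $\chi_\Gamma\ge 0$, i.e. $P^++P^-+Q^++Q^-\ge 1$. I would apply Lemma \ref{lemma: reduction} to each of $\{x_n^+\},\{x_n^-\},\{y_n^+\},\{y_n^-\}$ to assume the weight sits only at $n=1,p$ (resp. $n=1,q$), so $x_p^{\pm}=\tfrac{M-P^{\pm}}{p-1}$ and $y_q^{\pm}=\tfrac{M'-Q^{\pm}}{q-1}$ with $M/p\le P^{\pm}\le M$ and $M'/q\le Q^{\pm}\le M'$; since $p\le q$ the cheapest way to satisfy $P^++P^-+Q^++Q^-\ge 1$ is to put extra mass on the $Q^{\pm}$, and what is left is a two-variable linear program in $(M,M')$ whose optimum is $x_p^++x_p^-+y_q^++y_q^-=\tfrac1{p-1}$ (attained e.g. at $M=1$, $M'=0$, $P^{\pm}=\tfrac12$). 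The one point deserving care is that the competing regime $\tfrac{2M}p+\tfrac{2M'}q\ge1$ is vacuous for $p\ge 3$, since there $\tfrac{2M}p+\tfrac{2M'}q\le\tfrac{2(M+M')}p\le\tfrac2p<1$.

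For the matching upper bound I would exhibit an explicit connected simple surface $S$ with $\chi(\Gamma_S)=0$ realizing the value $\tfrac{p-2}{p-1}$: take $p-2$ copies each of the annulus-pieces $P_1^+$ and $P_1^-$ (whose holes wind $a$ and $a^{-1}$), one copy each of the disk-pieces $P_p^+$ and $P_p^-$, and $2(p-1)$ copies of the disk-piece $T_1$. Each $T_1$ carries one turn compatible with an $(\alpha_1,\alpha_1)$-turn and one compatible with an $(\alpha_2,\alpha_2)$-turn, so one can pair the turns so that two $T_1$'s join $P_p^+$ to $P_p^-$ while the remaining $T_1$'s join $P_p^+$ to the copies of $P_1^-$ and $P_p^-$ to the copies of $P_1^+$; then $\Gamma_S$ is connected with $4(p-1)$ vertices and $4(p-1)$ edges, so $\chi(\Gamma_S)=0$. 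One computes $n(S)=2(p-1)$ (the number of $\alpha_1$-arcs, namely $p$ from $P_p^+$ and $p-2$ from the $P_1^+$'s) and, by Lemma \ref{lemma: chi formula via graph}, $-\chi(S)=4(p-1)-2p=2(p-2)$, so $-\chi(S)/n(S)=\tfrac{p-2}{p-1}$. Because $A$ and $B$ are finite, every turn type occurs in a tree-type simple surface (Lemma \ref{lemma: ample}), so the Approximation Lemma \ref{lemma: approximation} yields connected simple surfaces $S_k$ with $\chi(\Gamma_{S_k})=1$, which are torsion-admissible for $[a,b]$ by Lemma \ref{lemma: stl and simple surfaces}, and with $-\chi(S_k)/n(S_k)\to\tfrac{p-2}{p-1}$; hence $\stl_G([a,b])\le\tfrac{p-2}{p-1}$, which finishes the proof. (The general free-product case is then obtained from the Isometric Embedding Theorem \ref{thm: isometric embedding}, since the subgroups generated by the letters of $[a,b]$ are cyclic, hence finite.)
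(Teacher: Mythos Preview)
Your approach is essentially the paper's: the LP lower bound via the sufficient collection $\mathcal P$, then the same explicit surface (same pieces, same counts) for the upper bound. Your bookkeeping yielding $-\chi_o=1-(x_p^++x_p^-+y_q^++y_q^-)$ and $\chi_\Gamma=P^++P^-+Q^++Q^--1$ is correct, as are the use of Lemma~\ref{lemma: reduction}, the derivation of $M+M'\le 1$, and the surface construction with its Euler-characteristic count.

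The one place that needs tightening is the final maximization. The talk of ``putting extra mass on the $Q^\pm$'' and the ``competing regime $2M/p+2M'/q\ge1$'' is heuristic; the two-variable LP in $(M,M')$ is asserted rather than solved, and the case $p=2$ (where that regime is not vacuous) is not addressed. There is a one-line finish that makes the case split unnecessary: since $p\le q$ gives $q-1\ge p-1$ and $2M'-(Q^++Q^-)\ge0$,
\[
\frac{2M-(P^++P^-)}{p-1}+\frac{2M'-(Q^++Q^-)}{q-1}\;\le\;\frac{2(M+M')-(P^++P^-+Q^++Q^-)}{p-1}\;\le\;\frac{2-1}{p-1}=\frac{1}{p-1},
\]
using $M+M'\le1$ and the $\chi_\Gamma$-constraint $P^++P^-+Q^++Q^-\ge1$. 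This is exactly the paper's inequality $(p-1)x_p+(q-1)y_q\le\tfrac12$ (after its symmetrization $x_k^+=x_k^-$), rewritten in your variables.
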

	\begin{proof}
		Let $\mathcal{P}$ be the sufficient collection above. Consider the polyhedron $C_{\mathcal{P}}$ defined in Section \ref{subsec: linprog lower bound}.
		Let $x_n^+$ (resp. $x_n^-$) be the coordinate corresponding to the piece $P_n^+$ (resp. $P_n^-$) for each $1\le n\le p$. Let $y_n^+$ (resp. $y_n^-$) be the coordinate corresponding to the piece $Q_n^+$ (resp. $Q_n^-$). Let $z_n$ and $w_n$ be the coordinates corresponding to the pieces $R_n$ and $T_n$ respectively.
		
		Then the gluing conditions in the definition of the polyhedron $C_{\mathcal{P}}$ as in Section \ref{subsec: linprog lower bound} become:
		\begin{align}
			\sum_{k=1}^p k x_k^+ + \sum_{k=1}^p (k-1) z_k = \sum_{k=1}^p k x_k^- + \sum_{k=1}^p (k-1) z_k &= \sum_{k=1}^q w_k, \label{eqn: gluing 1}\\
			\sum_{k=1}^q k y_k^+ + \sum_{k=1}^q (k-1) w_k = \sum_{k=1}^q k y_k^- + \sum_{k=1}^q (k-1) w_k &= \sum_{k=1}^p z_k. \label{eqn: gluing 2}
		\end{align}
		
		By counting the number of copies of the arc $\alpha_1$, the normalizing condition is
		$$\sum_{k=1}^p k x_k^+ + \sum_{k=1}^p k z_k=1.$$
		The left-hand side can be rewritten as $\sum_{k=1}^p k x_k^+ + \sum_{k=1}^p (k-1) z_k + \sum_{k=1}^p z_k$. 
		Thus by the gluing condition (\ref{eqn: gluing 1}) we can express the normalizing condition equivalently as
		\begin{equation}\label{eqn: normalizing}
		    \sum_{k=1}^p z_k+\sum_{k=1}^q w_k=1.    
		\end{equation}
		
		The Euler characteristic constraint $\chi_\Gamma\ge0$ can be written as
		$$\sum_{k=1}^p (1-k) z_k+\sum_{k=1}^q (1-k) w_k+\sum_{k=1}^p (1-\frac{k}{2})(x_k^+ + x_k^-)+\sum_{k=1}^q (1-\frac{k}{2})(y_k^+ + y_k^-)\ge0.$$
		Note that by the gluing condition (\ref{eqn: gluing 1}), we have
		\begin{align*}
		    &\sum_{k=1}^p (1-k) z_k + \sum_{k=1}^p (1-\frac{k}{2})(x_k^+ + x_k^-)\\
		=&\sum_{k=1}^p (x_k^+ + x_k^-) -\frac{1}{2}\left[\sum_{k=1}^p k x_k^+ + \sum_{k=1}^p (k-1) z_k\right] -\frac{1}{2}\left[\sum_{k=1}^p k x_k^- + \sum_{k=1}^p (k-1) z_k\right]\\
		=&\sum_{k=1}^p (x_k^+ + x_k^-) - \sum_{k=1}^q w_k.
		\end{align*}
		Similarly, we have
		$$\sum_{k=1}^q (1-k) w_k+\sum_{k=1}^q (1-\frac{k}{2})(y_k^+ + y_k^-)=\sum_{k=1}^q (y_k^+ + y_k^-) - \sum_{k=1}^p z_k.$$
		Using the normalizing condition (\ref{eqn: normalizing}), the constraint $\chi_\Gamma\ge0$ is equivalent to
		\begin{equation}\label{eqn: chigamma_ineq}
		    \sum_{k=1}^p (x_k^+ + x_k^-) + \sum_{k=1}^q (y_k^+ + y_k^-) \ge 1.
		\end{equation}
		
		The objective is to minimize $-\chi_o$, which is expressed as
		\begin{align*}
			\sum_{k=1}^p& (k-1) z_k+\sum_{k=1}^q (k-1) w_k +\left(\frac{p}{2}-1\right)(x_p^+ + x_p^-)+\left(\frac{q}{2}-1\right)(y_q^+ + y_q^-)\\
			&+\ \sum_{k=1}^{p-1} \frac{k}{2}(x_k^+ + x_k^-)+\sum_{k=1}^{q-1} \frac{k}{2} (y_k^+ + y_k^-)\\
			=\ & \frac{1}{2}\left[\sum_{k=1}^p k x_k^+ + \sum_{k=1}^p (k-1) z_k\right] +\frac{1}{2}\left[\sum_{k=1}^p k x_k^- + \sum_{k=1}^p (k-1) z_k\right]\\
			& +\ \frac{1}{2}\left[\sum_{k=1}^q k y_k^+ + \sum_{k=1}^q (k-1) w_k\right] +\frac{1}{2}\left[\sum_{k=1}^q k y_k^- + \sum_{k=1}^q (k-1) w_k\right]\\
			&-\ (x_p^+ + x_p^- + y_q^+ + y_q^-)\\
			=\ & \sum_{k=1}^q w_k+\sum_{k=1}^p z_k-(x_p^+ + x_p^- + y_q^+ + y_q^-)\\
			=\ & 1-(x_p^+ + x_p^- + y_q^+ + y_q^-),
		\end{align*}
		where we used the gluing conditions (\ref{eqn: gluing 1}) and (\ref{eqn: gluing 2}), and normalizing condition (\ref{eqn: normalizing}) at the last two steps, respectively.
		
		By Lemma \ref{lemma: reduction}, we may assume $x_k^\pm=0$ for all $1<k<p$ and $y_k^\pm=0$ for all $1<k<q$. 
		In addition, if we let $x_k^{+\prime}=x_k^{-\prime}=\frac{1}{2}(x_k^+ + x_k^-)$ for all $k$ without changing $z_k$ and $w_k$, the constraints (\ref{eqn: gluing 1})--(\ref{eqn: chigamma_ineq}) and the objective function are all unaffected.
		Thus we can assume $x_k^+=x_k^-$ for all $k$ and similarly for $y_k^\pm$.
		Hence the linear programming problem reduces to
		\begin{align*}
			\text{minimize:}\quad     &1-2x_p - 2y_q\\
			\text{subject to:}\quad    &x_1 + p x_p + \sum_{k=1}^p (k-1)z_k = \sum_{k=1}^q w_k\\
			&y_1 + q y_q + \sum_{k=1}^q (k-1)w_k = \sum_{k=1}^p z_k\\
			&\sum_{k=1}^q w_k+\sum_{k=1}^p z_k=1\\
			&x_1 + x_p + y_1 + y_q \ge \frac{1}{2}\\
			&x_i,y_i,z_k,w_k\ge0,
		\end{align*}
		where $x_i=x_i^\pm$ ($i=1$ or $p$), $y_i=y_i^\pm$ ($i=1$ or $q$), and the first four constraints correspond to (\ref{eqn: gluing 1})--(\ref{eqn: chigamma_ineq}).
		
		By symmetry, assume $p\le q$. On the one hand, note that by the first two constraints and the fact that $w_k,z_k\ge0$, we have
		$$x_1 + x_p\le \sum_{k=1}^q w_k-(p-1)x_p,\quad\text{and}\quad y_1+y_q\le \sum_{k=1}^p z_k-(q-1)y_q.$$
		Thus using the third and fourth constraints, we get
		$$\frac{1}{2}\le x_1 + x_p + y_1 + y_q\le\sum_{k=1}^q w_k-(p-1)x_p+\sum_{k=1}^p z_k-(q-1)y_q\le 1-(p-1)(x_p+y_q),$$
		which implies $2(x_p+y_q)\le\frac{1}{p-1}$ and thus the objective $1-2(x_p+y_q)\ge 1-\frac{1}{p-1}$.
		On the other hand, this lower bound $1-\frac{1}{p-1}$ is achieved by the feasible solution $x_1=\frac{1}{2}(1-\frac{1}{p-1})$, $x_p=\frac{1}{2(p-1)}$, $y_1=y_q=0$, $w_1=1$, $w_k=0$ for $k>1$, and $z_k=0$ for all $k$.
		Hence we conclude that the minimal value of the linear programming is $1-\frac{1}{\min(p,q)-1}$. Thus $$\stl_G([a,b])\ge1-\frac{1}{\min(p,q)-1}$$
		by Lemma \ref{lemma: linprog estimate}.
		
		Moreover, for the feasible solution above, let $n=2(p-1)$. Take $nx_1^+=nx_1^-=nx_1=p-2$ copies of $P_1^+$ and $P_1^-$, take $nx_p^+=nx_p^-=nx_p=1$ copy of $P_p^+$ and $P_p^-$, and take $nw_1=2(p-1)$ copies of $T_1$. Such pieces can be glued into a $\mathcal{P}$-simple surface $S$ of degree $n=2(p-1)$ that is connected and has $\chi(\Gamma_S)=0$; see Figure \ref{fig: surfex} for an example where $q\ge p=5$. Thus by Lemma \ref{lemma: approximation}, $1-\frac{1}{\min(p,q)-1}=-\chi(S)/n$ is the limit of complexities of a sequence of connected simple surfaces with $\chi_\Gamma=1$. This implies 
		$$\stl_G([a,b])\le 1-\frac{1}{\min(p,q)-1}$$
		by Corollary \ref{cor: simple surface is enough}.
		Combining the two parts we obtain the desired equality.
	\end{proof}
	
	\begin{figure}
		\centering
		\labellist
		\small \hair 2pt
		\pinlabel $a$ at 235 128
		\pinlabel $a^{-1}$ at 190 128
		\pinlabel $b$ at 210 102
		\pinlabel $b^{-1}$ at 208 152
		\pinlabel $a$ at 265 80
		\pinlabel $b$ at 290 40
		\pinlabel $a^{-1}$ at 335 0
		\pinlabel $b^{-1}$ at 337 50
		\pinlabel $a$ at 325 100
		\pinlabel $b$ at 370 108
		\pinlabel $a^{-1}$ at 420 145
		\pinlabel $b^{-1}$ at 378 150
		\pinlabel $a$ at 325 160
		\pinlabel $b$ at 330 205
		\pinlabel $a^{-1}$ at 343 255
		\pinlabel $b^{-1}$ at 290 220
		\pinlabel $a$ at 265 175
		\pinlabel $b$ at 210 195
		\pinlabel $a^{-1}$ at 155 177
		\pinlabel $b^{-1}$ at 133 215
		\pinlabel $a$ at 115 250
		\pinlabel $b$ at 90 205
		\pinlabel $a^{-1}$ at 93 160
		\pinlabel $b^{-1}$ at 55 150
		\pinlabel $a$ at 10 145
		\pinlabel $b$ at 45 107
		\pinlabel $a^{-1}$ at 90 95
		\pinlabel $b^{-1}$ at 87 55
		\pinlabel $a$ at 113 5
		\pinlabel $b$ at 125 40
		\pinlabel $a^{-1}$ at 155 80
		\pinlabel $b^{-1}$ at 213 60
		\endlabellist
		\includegraphics[scale=0.7]{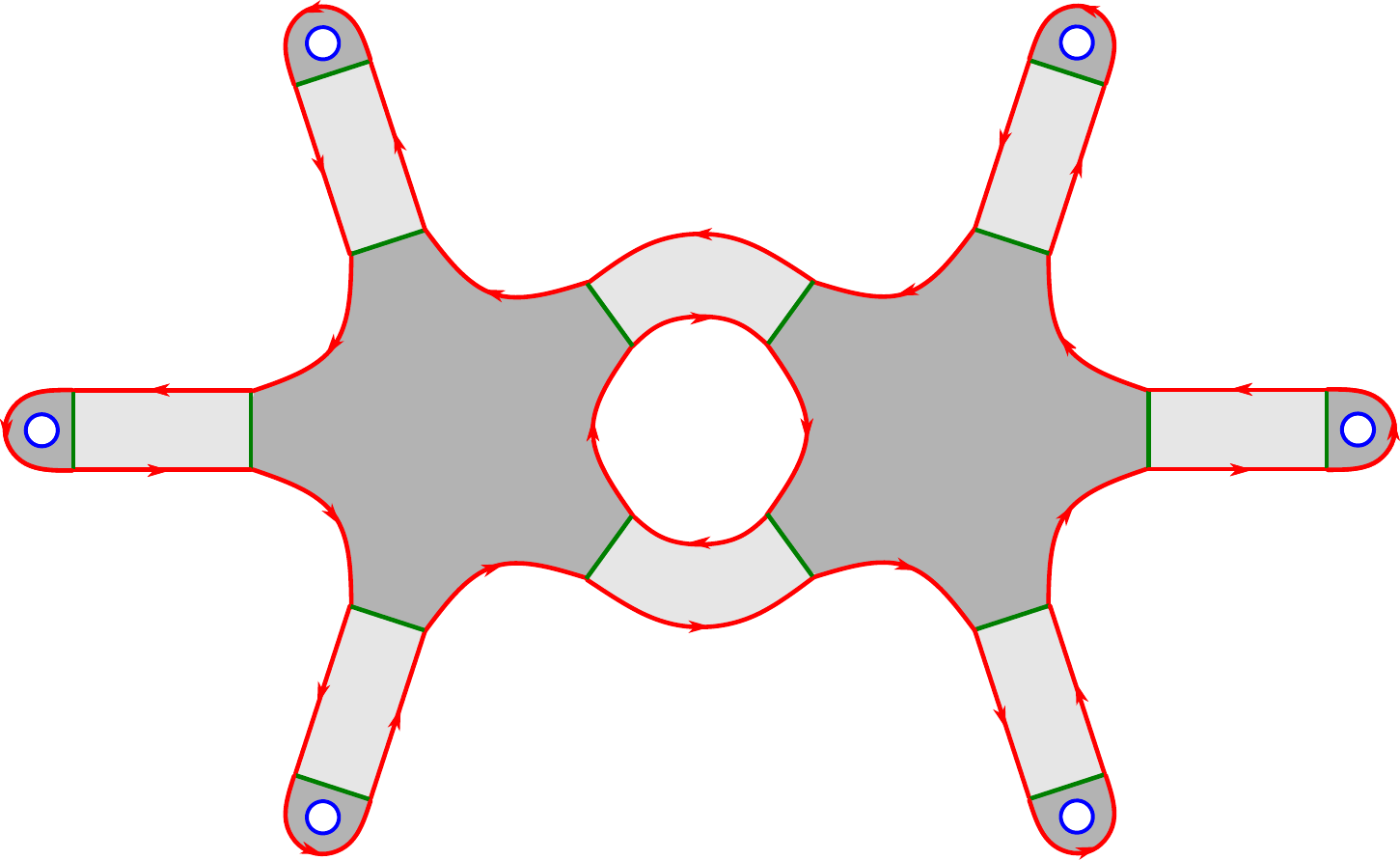}
		\caption{A connected $\mathcal{P}$-simple surface $S$ with the given number of pieces satisfying $\chi(\Gamma_S)=0$ in the case $q\ge p=5$.}
		\label{fig: surfex}
	\end{figure}
	
	Now we generalize this formula using Theorem \ref{thm: isometric embedding}.
	\begin{theorem}\label{thm: compute abAB}
	    Let $G=A*B$ be a free product, and let $a\in A$ and $b\in B$ be torsion elements of orders $p$ and $q$ respectively, where $p,q\ge2$. Then we have
	    $$\stl_G([a,b])=1-\frac{1}{\min(p,q)-1}.$$
	\end{theorem}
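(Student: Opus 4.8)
The plan is to deduce this general statement from the cyclic case already proved in Lemma \ref{lemma: compute abAB}, using the Isometric Embedding Theorem \ref{thm: isometric embedding}. First I would introduce the cyclic subgroups $A_0 \defeq \langle a\rangle \le A$ and $B_0 \defeq \langle b\rangle \le B$. Since $a$ has order $p$ and $b$ has order $q$, we have $A_0 \cong \Z/p$ and $B_0 \cong \Z/q$, with $a$ and $b$ serving as respective generators. The inclusions $i_A \colon A_0 \hookrightarrow A$ and $i_B \colon B_0 \hookrightarrow B$ are injective homomorphisms, so they induce an embedding $i \colon A_0 * B_0 \to A*B$ which sends the element $[a,b] \in A_0*B_0$ to $[a,b] \in A*B$.

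Next I would check the hypotheses of Theorem \ref{thm: isometric embedding} applied to $g = [a,b]$ regarded in $A_0*B_0$ (so that $A_0*B_0$ plays the role of the smaller free product and $A*B$ that of the larger one). The word $[a,b] = a\,b\,a^{-1}\,b^{-1}$ is cyclically reduced of syllable length four, because $p,q\ge2$ forces $a, a^{-1} \in A_0\setminus\{id\}$ and $b, b^{-1} \in B_0\setminus\{id\}$ and the syllables alternate between the two factors cyclically; hence $[a,b]$ is not conjugate into $A_0$ or $B_0$, and for the same reason $i([a,b]) = [a,b]$ is not conjugate into $A$ or $B$. Writing the cyclically reduced form as $a_1b_1a_2b_2$ with $a_1 = a$, $a_2 = a^{-1}$, $b_1 = b$, $b_2 = b^{-1}$, the subgroup generated by $\{a_1, a_2\}$ is $A_0$ and the subgroup generated by $\{b_1, b_2\}$ is $B_0$, both finite and in particular torsion groups. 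Thus assumption (2) of Theorem \ref{thm: isometric embedding} holds, and the theorem yields $\stl_{A_0*B_0}([a,b]) = \stl_{A*B}([a,b])$.

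Finally I would apply Lemma \ref{lemma: compute abAB} with the free product $A_0 * B_0 = (\Z/p) * (\Z/q)$ generated by $a$ and $b$, which gives $\stl_{A_0*B_0}([a,b]) = 1 - \frac{1}{\min(p,q)-1}$. Chaining this with the previous equality proves the stated formula. I do not anticipate any real obstacle: the entire substance of the argument is carried by Theorem \ref{thm: isometric embedding} and Lemma \ref{lemma: compute abAB}, and the only points requiring care are the routine verifications that the generators of $A_0$ and $B_0$ have exactly orders $p$ and $q$ (true by definition of the order of $a$ and of $b$) and that the syllable-length-four word $[a,b]$ is genuinely not conjugate into a factor of either free product.
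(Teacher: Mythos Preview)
Your proposal is correct and follows essentially the same approach as the paper: reduce to the cyclic case via the Isometric Embedding Theorem \ref{thm: isometric embedding} (checking assumption (\ref{item: assumption two}) using that the subgroups generated by the letters are finite), and then invoke Lemma \ref{lemma: compute abAB}. If anything, you are more explicit than the paper in verifying that $[a,b]$ is cyclically reduced and not conjugate into a factor.
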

	\begin{proof}
	    Let $A'=\Z/p$ and $B'=\Z/q$ be the subgroups generated by $a$ and $b$ respectively. Then the inclusions $A'\to A$ and $B'\to B$ satisfy the assumption (\ref{item: assumption two}) of Theorem \ref{thm: isometric embedding} since $A'$ and $B'$ are finite. 
	    Thus the result directly follows from Lemma \ref{lemma: compute abAB}.
	\end{proof}
	
	\subsection{The word $ab$}
	In this section we consider the word $g=ab$ in a free product $G=A*B$, where $a\in A$ and $b\in B$ are torsion elements of orders $p,q\ge2$. We will focus on the special case where $A=\Z/p$ is generated by $a$ and $B=\Z/q$ is generated by $b$. The general case will follow from the isometric embedding Theorem \ref{thm: isometric embedding}.
	
	Using the setup in Section \ref{sec: free prod}, the word $g=ab$ is represented by a loop $\gamma$ consisting of arcs $\alpha$ and $\beta$, where $\alpha$ represents $a$ and $\beta$ represents $b$. There is exactly one type of turn on the $A$-side: $(\alpha,\alpha)$.
	
	Since $a$ has order $p$, there is a disk-piece with $p$ copies of the arc $\alpha$ on its polygonal boundary. Therefore, we can apply splitting to any $A$-piece with more than $p$ copies of the turn $(\alpha,\alpha)$ on the boundary. So, on the $A$-side, after splitting we are left with pieces with $1\leq n \leq p$ arcs $\alpha$ on the boundary. Furthermore, these pieces are disks only when $n=p$. We denote such pieces by $P_n$.  
	
	Similarly, we can reduce simple surfaces by splitting on the $B$-side to $q$ possible pieces, each with $1\leq n \leq q$ arcs $\beta$. These pieces are disks only when $n=q$. We denote such pieces by $Q_n$.
	
	Let $\mathcal{P}$ be the collection above consisting of these $p$ types of pieces on the $A$-side and these $q$ types of pieces on the $B$-side. The first row of Figure \ref{fig: possible_pieces} depicts such pieces when $p=4$ and $q=3$.
	
	\begin{lemma}
		The collection $\mathcal{P}$ is sufficient. 
	\end{lemma}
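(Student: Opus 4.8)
The plan is to reproduce the argument of Lemma~\ref{lemma: sufficient collection for [a,b]} in this simpler setting, where each side of $\gamma$ carries only one arc. For $g=ab$, every piece on the $A$-side has all of its turns of the unique type $(\alpha,\alpha)$, and its polygonal boundary is determined up to rotation by the number $n\ge1$ of copies of $\alpha$ it carries, with winding class $a^n$; symmetrically on the $B$-side. First I would check that splitting (in the form of Section~\ref{subsec: splitting}) applies to any $A$-piece with $n>p$ arcs: since $a$ has order $p$, any block of $p$ consecutive arcs $\alpha$ along the polygonal boundary has product $a^p=\mathrm{id}$, and the turns bounding such a block are automatically of the same type $(\alpha,\alpha)$, so splitting peels off a disk-piece $P_p$ and leaves a piece with $n-p\ge1$ arcs $\alpha$. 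Iterating this on both sides, any simple surface is carried, by finitely many splittings, to a $\mathcal{P}$-simple surface of the same degree.

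Next I would track the complexity. By Lemma~\ref{lemma: chi formula via graph}, each splitting leaves the number of edges of the gluing graph unchanged while adding one vertex representing a disk-piece, so it decreases the total $-\chi$ by exactly $1$; it also preserves the total degree, and (since it splits one vertex into two without joining them by an edge) it cannot merge components, so a forest stays a forest. Thus, starting from a connected simple surface $S$ with $\chi(\Gamma_S)=1$ and applying splittings until every piece lies in $\mathcal{P}$, we arrive at a $\mathcal{P}$-simple surface whose gluing graph is a forest, with total degree $n(S)$ and total $-\chi$ equal to $-\chi(S)$ minus the number of splittings performed, hence at most $-\chi(S)$. Since the overall ratio $\tfrac{\text{total }(-\chi)}{\text{total degree}}$ is a weighted average of the ratios of the connected components, some component $S'$ is a connected $\mathcal{P}$-simple surface with $\chi(\Gamma_{S'})=1$ and $\tfrac{-\chi(S')}{n(S')}\le\tfrac{-\chi(S)}{n(S)}$. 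Therefore the infimum of $-\chi/n$ over connected $\mathcal{P}$-simple surfaces with $\chi(\Gamma)=1$ is at most the corresponding infimum over all connected simple surfaces with $\chi(\Gamma)=1$, and combining this with the bound $\stl(g)\ge\inf(-\chi/n)$ of Lemma~\ref{lemma: stl and simple surfaces} gives precisely the inequality defining sufficiency in Definition~\ref{def: sufficient collection}.

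I do not expect a genuine obstacle: this is the direct analogue of Lemma~\ref{lemma: sufficient collection for [a,b]}, and the only point that needs care is verifying that each splitting step legitimately applies, i.e.\ that a block of $p$ consecutive $\alpha$'s (respectively $q$ consecutive $\beta$'s) closes up to the identity and is bounded by turns of matching type. This is immediate because $\alpha$ represents the generator $a$ of $\Z/p$ and $\beta$ the generator $b$ of $\Z/q$, so in particular no splitting of the more delicate (wild) abelian kind from Example~\ref{example: splitting} is ever needed here.
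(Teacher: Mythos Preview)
Your proposal is correct and follows essentially the same approach as the paper: the paper's proof simply says to apply the argument of Lemma~\ref{lemma: sufficient collection for [a,b]}, and you have spelled out exactly that argument (iterated splitting, tracking $-\chi$ via Lemma~\ref{lemma: chi formula via graph}, forests stay forests, then pass to a best component by the weighted-average observation). Your additional verification that the special splitting of Section~\ref{subsec: splitting} applies here because all turns are of the single type $(\alpha,\alpha)$ is accurate and matches the discussion preceding the lemma in the paper.
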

	
	\begin{proof}
		Since we reduced the collection of pieces to $\mathcal{P}$ by a series of splittings, the argument in Lemma \ref{lemma: sufficient collection for [a,b]} shows that $\mathcal{P}$ is sufficient.
	\end{proof}
	
	\begin{theorem}[Product formula]\label{thm: product formula}
		Let $a\in A$ and $b\in B$ be torsion elements of order $p$ and $q$ respectively such that $p\leq q$, then 
		$$\stl_G(ab)=\stl_{\Z/p*\Z/q}(ab)=1-\frac{q}{p(q-1)},$$
		where $\Z/p$ and $\Z/q$ are the subgroups generated by $a$ and $b$ respectively.
	\end{theorem}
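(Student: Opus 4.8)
By the lemma just proved, the pieces $P_n$ $(1\le n\le p)$ on the $A$-side and $Q_n$ $(1\le n\le q)$ on the $B$-side form a sufficient collection $\mathcal P$, and the inclusions $\Z/p\hookrightarrow A$, $\Z/q\hookrightarrow B$ satisfy assumption (2) of Theorem \ref{thm: isometric embedding} since the subgroups are finite; so it suffices to treat the case $A=\Z/p=\langle a\rangle$, $B=\Z/q=\langle b\rangle$. The strategy mirrors that of Lemma \ref{lemma: compute abAB}: first extract the bound $\stl_G(ab)\ge 1-\tfrac{q}{p(q-1)}$ from the linear program of Section \ref{subsec: linprog lower bound}, then match it from above with an explicit connected $\mathcal P$-simple surface whose gluing graph has Euler characteristic $0$, invoking Corollary \ref{cor: simple with chi01}.

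For the lower bound I would describe $C_{\mathcal P}$ concretely. Let $x_n,y_n$ be the (normalized) numbers of pieces of type $P_n,Q_n$. There is a single turn type $(\alpha,\alpha)$ on the $A$-side, compatible only with $(\beta,\beta)$, and $P_n$ (resp. $Q_n$) carries exactly $n$ such turns, so the gluing condition together with the normalization $n(x)=1$ reads
\[
\sum_{n=1}^{p} n\,x_n=\sum_{n=1}^{q} n\,y_n=1 .
\]
Since $P_n$ is a disk-piece exactly when $n=p$ and $Q_n$ exactly when $n=q$, a short computation in the spirit of Lemma \ref{lemma: compute abAB} rewrites the constraint $\chi_\Gamma\ge 0$ as $\sum_n x_n+\sum_n y_n\ge 1$ and gives $-\chi_o=1-x_p-y_q$. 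Thus the task is to maximize $x_p+y_q$ subject to the two displayed equalities, the inequality $\sum_n x_n+\sum_n y_n\ge 1$, and nonnegativity.

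Next I would solve this small program. By Lemma \ref{lemma: reduction} one may assume $x_n=0$ for $1<n<p$ and $y_n=0$ for $1<n<q$, since applying the lemma only raises $x_p$ and $y_q$ while leaving the other constraints intact. The equalities become $x_1=1-px_p\ge0$ and $y_1=1-qy_q\ge0$, giving $x_p\le\tfrac1p$, $y_q\le\tfrac1q$, and $\sum_n x_n+\sum_n y_n\ge1$ turns into $(p-1)x_p+(q-1)y_q\le1$. Over this two-dimensional region $x_p+y_q$ is maximized at $x_p=\tfrac1p$, $y_q=\tfrac1{p(q-1)}$: the point is feasible because $\tfrac1{p(q-1)}\le\tfrac1q$ is equivalent to $\tfrac{p}{p-1}\le q$, which holds as $q\ge p\ge2$, and a comparison of the finitely many vertices (using $p\le q$) rules out the others; the optimal value is $\tfrac1p+\tfrac1{p(q-1)}=\tfrac{q}{p(q-1)}$. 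Hence $-\chi_o\ge 1-\tfrac{q}{p(q-1)}$ on $C_{\mathcal P}$, and Lemma \ref{lemma: linprog estimate} gives $\stl_G(ab)\ge 1-\tfrac{q}{p(q-1)}$.

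For the reverse inequality I would realize the optimal vertex by an integral surface. Set $n=p(q-1)$ and take $q-1$ copies of $P_p$, one copy of $Q_q$, and $pq-p-q$ copies of $Q_1$, and nothing else. The total number of $(\alpha,\alpha)$-turns is $p(q-1)$ and the total number of $(\beta,\beta)$-turns is $q+(pq-p-q)=p(q-1)$, so the pieces glue along compatible turns; moreover the gluing can be arranged to be connected, with the single $Q_q$ joining all the $P_p$'s and the degree-one $Q_1$'s together with the one leftover turn of $Q_q$ absorbing the $(p-1)(q-1)$ remaining turns of the $P_p$'s. The resulting gluing graph has $e=p(q-1)$ edges and $p(q-1)$ vertices, so $\chi(\Gamma_S)=0$, and $S$ has $d=q$ disk-pieces (the $q-1$ copies of $P_p$ and the copy of $Q_q$), so by Lemma \ref{lemma: chi formula via graph}
\[
\frac{-\chi(S)}{n(S)}=\frac{e-d}{p(q-1)}=\frac{p(q-1)-q}{p(q-1)}=1-\frac{q}{p(q-1)} .
\]
By Corollary \ref{cor: simple with chi01} this yields $\stl_G(ab)\le 1-\tfrac{q}{p(q-1)}$, and combined with the lower bound this proves the formula; the general case then follows from Theorem \ref{thm: isometric embedding}. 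I expect no serious obstacle, as the heavy lifting is done in Sections \ref{subsec: linprog lower bound}--\ref{subsec: irreducible}; the delicate points are the optimization (checking that the asymmetry $p\le q$ pushes the optimum to $x_1=0$ rather than $y_1=0$) and the degenerate case $p=q=2$, where there are no $Q_1$-pieces and $S$ is just $P_2$ glued to $Q_2$ along its two turns, consistent with $\stl=0$ there.
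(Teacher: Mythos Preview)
Your proposal is correct and follows essentially the same route as the paper: the same sufficient collection $\mathcal P$, the same linear program with objective $1-x_p-y_q$, the same reduction via Lemma~\ref{lemma: reduction}, the same optimal solution $x_p=\tfrac1p$, $y_q=\tfrac1{p(q-1)}$, and the same explicit surface (one $Q_q$, $q-1$ copies of $P_p$, and $pq-p-q$ copies of $Q_1$) with $\chi(\Gamma_S)=0$. The only cosmetic differences are that the paper bounds $x_p+y_q$ by a direct chain of inequalities rather than by enumerating vertices, and it spells out the gluing more explicitly (two turns of $Q_q$ go to a single $P_p$ to create the loop, the remaining $q-2$ turns of $Q_q$ each go to a fresh $P_p$, and all leftover $P_p$-turns go to $Q_1$'s); your phrase ``the one leftover turn of $Q_q$'' is doing exactly this, but it would help to say so plainly so that connectedness of $\Gamma_S$ is visibly ensured.
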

	\begin{proof}
		The first equality follows from Theorem \ref{thm: isometric embedding}. So it suffices to compute $\stl_{\Z/p*\Z/q}(ab)$.
		
		Let $\mathcal{P}$ be the sufficient collection above. Consider the polyhedron $C_{\mathcal{P}}$ defined in Section \ref{subsec: linprog lower bound}. Let $x_n$ be the coordinate corresponding to the piece $P_n$ for each $1\leq n \leq p$. Let $y_n$ be the coordinate corresponding to the piece $Q_n$ for each $1 \leq n \leq q$. 
		
		The gluing condition in the definition of $C_\mathcal{P}$ becomes:
		$$\sum_{k=1}^p kx_k = \sum_{k=1}^q ky_k.
		$$
		
		The normalizing condition in the definition of $C_{\mathcal{P}}$ is: 
		$$\sum_{k=1}^p kx_k=1.
		$$
		
		The Euler characteristic constraint $\chi_\Gamma \geq 0$ can be written as: 
		
		$$ \sum_{k=1}^{p} x_k \left(1-\frac{k}{2}\right) + \sum_{k=1}^q y_k \left(1-\frac{k}{2}\right) \geq 0.
		$$
		
		Using the gluing and normalizing conditions, this is equivalent to: 
		
		$$\sum_{k=1}^p x_k + \sum_{k=1}^q y_k \geq 1.
		$$
		
		The objective is to minimize $-\chi_o$, which is expressed as: 
		\begin{align*}
			& \sum_{k=1}^{p-1}\frac{k}{2}\cdot x_k + \left(\frac{p}{2}-1\right)x_p + \sum_{k=1}^{q-1}\frac{k}{2}\cdot y_k + \left(\frac{q}{2}-1\right)y_q\\
			& = \frac{1}{2}\left(\sum_{k=1}^p k\cdot x_k +\sum_{k=1}^q k \cdot y_k \right) -x_p-y_q\\
			& = 1-x_p -y_q.
		\end{align*}
		
		Then, by Lemma \ref{lemma: reduction}, it is sufficient to assume that $x_k=0$ for $1<k<p$ and $y_j=0$ for $1<j<q$. This reduces the linear programming problem to: 
		\begin{align*}
			\text{minimize:}\quad     &1-x_p-y_q\\
			\text{subject to:}\quad &x_1+px_p=1\\
			&y_1+qy_q=1\\
			&x_1+x_p+y_1+y_q\geq 1\\
			& x_1, x_p, y_1, y_q \geq 0
		\end{align*}
		
		Then the constraints imply that
		$$2=x_1+y_1+px_p+qy_q\ge 1+(p-1)x_p+(q-1)y_q=1+(q-1)(x_p+y_q)-(q-p)x_p.$$
		Thus 
		$$(q-1)(x_p+y_q)\le 1+(q-p)x_p\le 1+\frac{q-p}{p}=\frac{q}{p},$$
		where we used the assumption that $q\ge p$ and the fact that $x_p\le 1/p$, which is a consequence of the first constraint since $x_1\ge0$.
		Hence it follows that the objective function satisfies
		$$1-(x_p+y_q)\ge 1-\frac{q}{p(q-1)}.$$
		
		
		
		
	
	    This lower bound is achieved by the feasible solution $x_1=0$, $x_p=\frac{1}{p}$, $y_1=\frac{pq-p-q}{p(q-1)}$, and $y_q=\frac{1}{p(q-1)}$. Therefore, by Lemma \ref{lemma: linprog estimate}, $\stl_{\Z/p*\Z/q}(ab)\geq 1-\dfrac{q}{p(q-1)}$. 
	    
	    Moreover, the solution above is (projectively) represented by a connected $\mathcal{P}$-simple surface $S$ of degree $p(q-1)$ with $\chi(\Gamma_S)=0$ in the following way, depicted in Figure \ref{fig: surfex2} in the case where $p=4$ and $q=5$. There is a single piece of type $Q_q$ in $S$, where $2$ out of the $q$ turns are glued with $2$ turns in a piece of type $P_p$, forming the unique embedded loop in $\Gamma_S$. The remaining $p-2$ turns of this piece of type $P_p$ are glued to $p-2$ pieces of type $Q_1$. As for the remaining $q-2$ turns of the unique piece of type $Q_q$, each of them is glued to a new piece of type $P_p$. For each of these $q-2$ new pieces of type $P_p$, the other $p-1$ turns are glued to a piece of type $Q_1$.
	    
	    Therefore, we have $\stl_{\Z/p*\Z/q}(ab)\leq 1-\dfrac{q}{p(q-1)}$ by Corollary \ref{cor: simple surface is enough}. Thus 
	    $$\stl_G(ab)=\stl_{\Z/p*\Z/q}(ab)= 1-\frac{q}{p(q-1)}.$$
	\end{proof}

	\begin{figure}
		\centering
		\labellist
		\small \hair 2pt
		\pinlabel $a$ at 265 178
		\pinlabel $b$ at 157 182
		\pinlabel $a$ at 138 207
		\pinlabel $b$ at 167 233
		\pinlabel $a$ at 127 245
		\pinlabel $b$ at 100 275
		\pinlabel $a$ at 87 233
		\pinlabel $b$ at 60 205
		\pinlabel $a$ at 102 193
		\pinlabel $b$ at 102 163
		\pinlabel $a$ at 73 155
		\pinlabel $b$ at 50 192
		\pinlabel $a$ at 32 155
		\pinlabel $b$ at -5 137
		\pinlabel $a$ at 32 115
		\pinlabel $b$ at 47 77
		\pinlabel $a$ at 73 115
		\pinlabel $b$ at 102 105
		\pinlabel $a$ at 102 75
		\pinlabel $b$ at 60 63
		\pinlabel $a$ at 90 35
		\pinlabel $b$ at 115 5
		\pinlabel $a$ at 125 23
		\pinlabel $b$ at 168 35
		\pinlabel $a$ at 140 60
		\pinlabel $b$ at 155 88
		\pinlabel $a$ at 265 90
		\pinlabel $b$ at 345 94
		\pinlabel $a$ at 320 133
		\pinlabel $b$ at 345 173
		
		\pinlabel $a$ at 242 133
		\pinlabel $b$ at 190 133
		\pinlabel $P_4$ at 282 133
		\pinlabel $Q_5$ at 140 133
		\pinlabel $P_4$ at 55 133
		\pinlabel $P_4$ at 115 45
		\pinlabel $P_4$ at 115 220
		
		\endlabellist
		\includegraphics[scale=0.7]{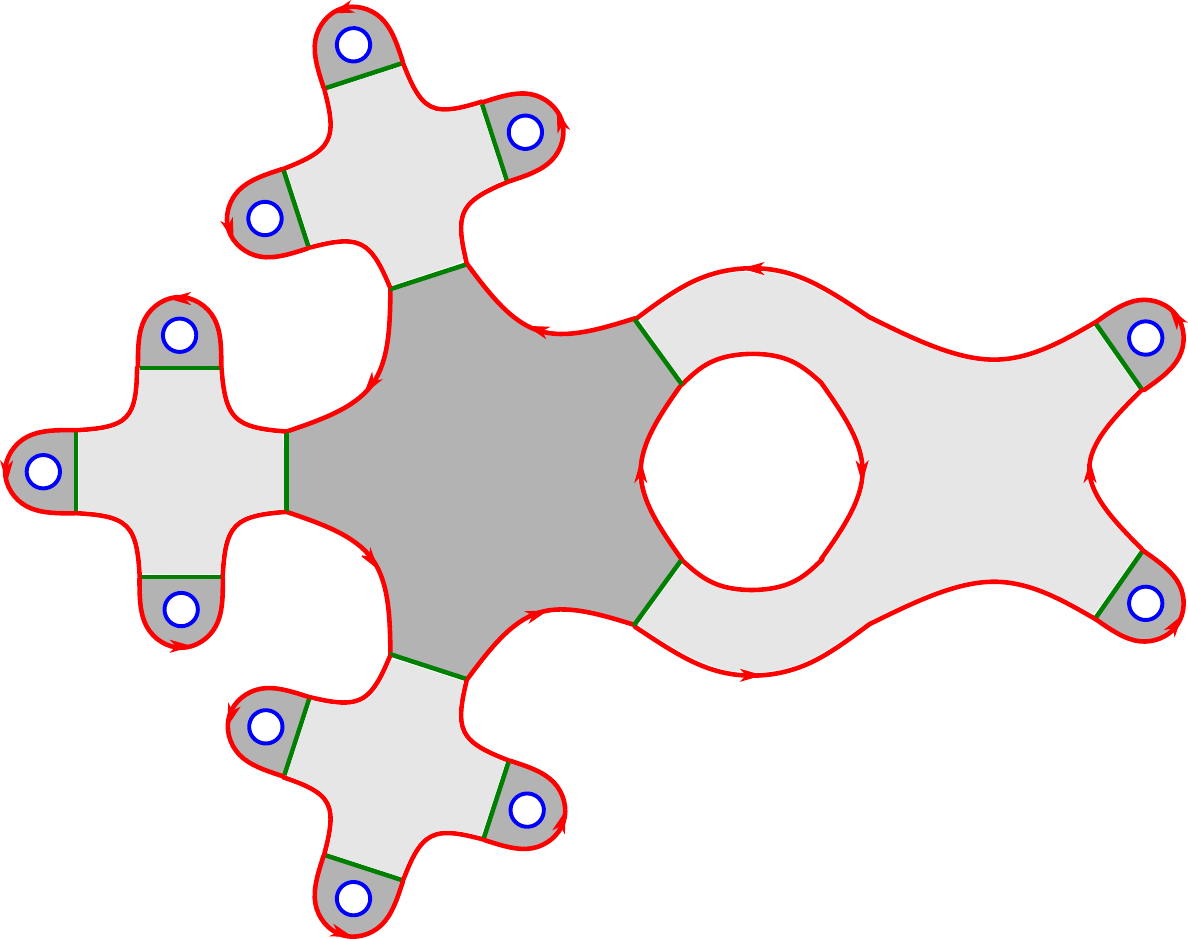}
		\caption{A connected $\mathcal{P}$-simple surface $S$ with the given number of pieces satisfying $\chi(\Gamma_S)=0$ in the case $p=4$ and $q=5$.}
		\label{fig: surfex2}
	\end{figure}
	
	\begin{remark} There is a product formula \cite[Theorem 2.93]{Cal:sclbook} that computes scl of $ab$ in a free product $A*B$ for $a\in A$ and $b\in B$. The result only involves the orders of $a$ and $b$ and $\scl_A(a)$ and $\scl_B(b)$.
	
	It seems unlikely to have such a generalization of Theorem \ref{thm: product formula} for stl when $a$ and $b$ are not necessarily torsion elements. For instance, if $a$ has finite order and $b$ has infinite order, such a formula would express $\stl_G(ab)$ as a function of the order of $a$ and $\stl_B(b)$. This does not seem natural in the following example.
	
	Let $p\leq q \leq r$, and let $\zz /p$, $\zz /q$, and $\zz /r$ be generated by $x$, $y$, and $z$, respectively. The methods in Sections \ref{sec: compute stl} and \ref{sec: examples} generalize to free products of more than two groups. For $G=\Z/p*\Z/q*\Z/r$, a similar calculation as in Theorem \ref{thm: product formula} gives 
	$$\stl_G (xyz)=2-\frac{q}{p(q-1)}.$$
	Consider $G$ as the free product of $A=\Z/p$ and $B=\Z/q*\Z/r$, and let $a=x\in A$ and $b=yz\in B$. It seems unnatural to express $2-\dfrac{q}{p(q-1)}$ as a simple function of $p$ and $\stl_B(b)=1-\frac{r}{q(r-1)}$ since the result depends on $q$ but not on $r$.
	\end{remark}
	
	\bibliographystyle{alpha}
	\bibliography{stl}
	
\end{document}